\documentclass{amsart}
\usepackage{amsfonts, amsmath, amsthm}
\usepackage{amssymb}
%\usepackage{epstopdf}
%\DeclareGraphicsRule{.tif}{png}{.png}{`convert #1 `dirname #1`/`basename #1 .tif`.png}

%\allowdisplaybreaks
%\renewcommand{\baselinestretch}{1.2}
%\renewcommand{\arraystretch}{1.2}
\newcommand{\bfy}{{\mathbf{y}}}

\newcommand{\domp}{{\sf Dom^{+}}}

\newcommand{\bbC}{{\mathbb C}}

\newcommand{\bbR}{{\mathbb R}}

\newcommand{\fa}{{\mathfrak a}}
\newcommand{\fb}{{\mathfrak b}}
\newcommand{\fc}{{\mathfrak c}}
\newcommand{\fd}{{\mathfrak d}}

\newcommand{\ff}{{\mathfrak f}}

\newtheorem{theorem}{Theorem}
\newtheorem{proposition}[theorem]{Proposition}
\newtheorem{lemma}[theorem]{Lemma}
\newtheorem{corollary}[theorem]{Corollary}

\newtheorem{definition}[theorem]{Definition}
\newtheorem{example}[theorem]{Example}

\newtheorem{question}{Question}

\newtheorem{remark}[theorem]{Remark}

 \newcounter{thlistctr}
 \newenvironment{thlist}{\
 \begin{list}%
 {\alph{thlistctr}}%
 {\setlength{\labelwidth}{2ex}%
 \setlength{\labelsep}{1ex}%
 \setlength{\leftmargin}{6ex}%
 \usecounter{thlistctr}}}%
 {\end{list}}

\newsymbol \ndiv   232D
\newsymbol \nmodels  2332

\newcommand{\bfv}{{\mathbf v}}

\newcommand{\bG}{{\mathbf G}}

\newcommand{\bT}{{\mathbf T}}

\newcommand{\cC}{{\mathcal C}}

% today.tex: Macro to print today's date
%--
% Author: John W. Shipman, NM Tech Computer Center,
%   Socorro, NM 87801; john at nmt.edu
%--
% EXPORTED FUNCTIONS:
%   \today: Outputs today's date as ``yyyy-mm-dd''
%   \now: Outputs the current time as ``hh:dd''
%   \timestamp: \today, plus one space, plus \now
%--
\newcount\minute        % Current minute within the hour
\newcount\hour          % Current hour (24-hour type)
\newcount\hourMins  % Temporary for taking hour modulo 60
%
% - - -   \ n o w   - - -
%
\def\now%
{% Displays today's time as ``hh:mm''
%  The \time macro gives the minutes since midnight.  Compute
%  the whole hours by dividing this by 60, then find the
%  minute by effectively taking the minutes modulo 60.
%
  \minute=\time    % Number of minutes since midnight
  \hour=\time \divide \hour by 60 % Get hours
  \hourMins=\hour \multiply\hourMins by 60
  \advance\minute by -\hourMins % Hours modulo 60
  \zeroPadTwo{\the\hour}:\zeroPadTwo{\the\minute}%
}% --- \now ---
%
% - - -   \ t i m e s t a m p   - - -
%
\def\timestamp%
{% Displays ``yyyy-mm-dd hh:mm''
  \today\ \now
}% --- \timestamp ---
%
% - - -   \ t o d a y   - - -
%
% \def\today%
% {% Displays today's date and time as ``yyyy-mm-dd hh:mm''
%   \the\year-\zeroPadTwo{\the\month}-\zeroPadTwo{\the\day}%
% }% --- \today ---
%
% - - -   z e r o P a d T w o   - - -
%
\def\zeroPadTwo#1%
{% Left zero pad of the argument to 2 digits.  The argument
%  should be a number between 1 and 99.  This macro outputs
%  a `0' if the argument is less than ten, then it outputs
%  the argument.
%
  \ifnum #1<10 0\fi    % Conditionally output a zero
  #1%    Then output the argument
}% --- \zeroPadTwo ---

\title{Characteristic Points of Recursive Systems}
\author{Jason P. Bell, Stanley N. Burris, and Karen A. Yeats}
\subjclass[2010]{05A16}
\begin{document}

\maketitle
%\hfill\fbox{\small \begin{tabular}{l}CharPointsVersMay09Compressed.K3.tex\\ \timestamp\end{tabular}}

%\bigskip

%%%%%%%%%%%%%  THE ABSTRACT  %%%%%%%%%%%%%%%

\begin{abstract} 								
Characteristic points have been a primary tool in the study of a 		generating function defined by a single recursive equation. 			
We investigate the proper way to adapt this tool when working with 		
multi-equation recursive systems. 
						
Given an irreducible non-negative power series system with $m$ equations, 
let $\rho$ be the radius of convergence of the solution power series and 
let $\pmb{\tau}$ be the values of the solution series evaluated at $\rho$.
The main results of the paper include: 			
\begin{thlist} 									
\item 									
the set of characteristic points form an antichain in $\bbR^{m+1}$, 	
\item 									
given a characteristic point $(a,\mathbf{b})$, 					
(i) the spectral radius of the Jacobian of $\bG$ at $(a, \mathbf{b})$  is $\ge 1$, 			
and (ii) it is $=1$ iff $(a,\mathbf{b}) = (\rho,\pmb{\tau})$, 			
\item 									
if $(\rho,\pmb{\tau})$ is a characteristic point, then (i) $\rho$ is the largest $a$ 		
for $(a,\mathbf{b})$ a characteristic point, and (ii) a characteristic point	
$(a,\mathbf{b})$ with $a=\rho$ is the extreme point $(\rho,\pmb{\tau})$.			
\end{thlist} 									 				
\end{abstract}

%%%%%%%%%%%%%%  INTRODUCTION %%%%%%%%%%%%%%%%%

\section{Introduction and Preliminaries}

Recursively defined generating functions play a major role in combinatorial enumeration; see 
the recently published book \cite{FlSe2009} for numerous examples. 
The important technique of expressing a generating function as a product of geometric series (as well as other kinds of products) was introduced by Euler in the mid 1700s, in his study of various problems connected with the number of partitions of integers. This investigation of partition problems was continued by Sylvester and Cayley (see, for example, \cite{Cayley1856}, \cite{Sylvester1857}), starting in the mid 1850s.  The expressions they used for partition generating functions were explicit, whereas the fundamental equation
\begin{eqnarray} \label{cayley form}
\sum_{n\ge 1} t_n x^n & =& x\cdot \prod_{n\ge 1} (1-x^n)^{-t_n},
\end{eqnarray}
 introduced in 1857 by Cayley \cite{Cayley1857}, for rooted unlabeled trees, 
defined the coefficients $t_n$ implicitly, yielding a recursive procedure to compute
the $t_n$.
Cayley used this to recursively calculate (with some errors) the first dozen values of $t_n$, and later applied his method to
recursively enumerate certain kinds of chemical compounds.

	Let $T(x) = \sum_{n\ge 1} t_nx^n$. In 1937 P\'olya (see \cite{PoRe1987})
converted \eqref{cayley form} into 
\begin{eqnarray}\label{polya form}
T(x) & =& x\cdot\exp\Big(\sum_{m\ge 1} T(x^m)/m\Big),
\end{eqnarray}
a form to which he was able to apply analytic techniques to find asymptotics for the $t_n$, namely he proved
\begin{eqnarray}
t_n &\sim& C \rho^{-n}n^{-3/2}
\end{eqnarray}
where $\rho$ is the radius of convergence of $T(x)$, and $C$ a positive constant.\footnote{In \cite{BBY2006} 
 we found this law so ubiquitous among naturally defined classes of trees defined by a single 
 equation that we referred to it as the \textit{universal law} for rooted trees.} 
A similar result held for the various classes
of chemical compounds studied by Cayley. Although the function $T(x)$ was not expressible in terms of well-known functions,
nonetheless P\'olya showed how to determine $C$ and $\rho$ directly from \eqref{polya form}.
 P\'olya's methods were applied to nearly regular classes of trees in 1948 by Otter \cite{Otter1948}. 
 
In 1974 Bender \cite{Bender1974}, following P\'olya's ideas,  formulated a general result for how to determine the 
radius of convergence $\rho$ of a power series $T(x)$ defined by a functional equation $F(x,y)=0$. 
Bender's hypotheses guaranteed that $\rho$ was positive and finite, and that $\tau := T(\rho)$ was also finite.
His method was simply to find $(\rho,\tau)$ among the solutions $(a,b)$ (called \textit{characteristic points}) of the 
\textit{characteristic system}
\begin{eqnarray*}
F(x,y) & = & 0 \\
\frac{\partial F}{\partial y}(x,y) & = & 0.
\end{eqnarray*}
A decade later Canfield \cite{Canfield1984} found a gap in the hypotheses of Bender's formulation when there were several characteristic points. In the case of a polynomial functional equation, Canfield sketched a method 
%(based on analyzing the singularities of an algebraic curve and how the branches relate to one another) 
to determine which of the characteristic points gives the radius of convergence of the solution $y=T(x)$. 

In the late 1980s Meir and Moon \cite{MeMo1989} focused on a special case of Canfield's work, namely
 when $F(x,y)=0$ is of the form $y=G(x,y)$, 
where $G(x,y)$ is a power series with nonnegative coefficients.  
The interesting cases were such that setting 
$T(x) = G(x,T(x))$, with $T(x)$ an indeterminate power series, gave a \textit{recursive} determination of the coefficients 
of $T(x)$. 
One advantage of their restricted form of recursive equation was that there could be at most one characteristic point.
This formulation was adopted by Odlyzko in his 1995 survey paper \cite{Odlyzko1995} as well as in the 
recent book \cite{FlSe2009} of Flajolet and Sedgewick.  These publications have focused on characteristic points in the 
\textit{interior} of  the domain of convergence  of $G(x,y)$, in the context of proving that 
$\rho$ is a square  root singularity of the solution $y = T(x)$. If $(\rho,\tau)$ is on the boundary of the domain of $G(x,y)$  then $\rho$ may not be a square-root singularity of $T(x)$. 

Most areas of application actually require a recursive system
of equations
 \begin{equation}\label{system}
\left\{
\begin{array}{l c l}
y_1&=&G_1(x,y_1,\ldots,y_m)\\
&\vdots&\\
y_m&=&G_m(x,y_1,\ldots,y_m),
\end{array}
\right.
\end{equation}
written more briefly as $\mathbf{y} = \bG(x,\mathbf{y})$. 
(A precise definition of the systems considered in this paper is given in $\S$\ref{sec WellConditioned}.)
This rich area of enumeration has been rather 
slow in it development. In the 1970s Berstel and Soittola (see \cite{FlSe2009} V.3) carried out a thorough analysis of enumerating the 
words in a regular language using recursive systems of equations that were linear in $y_1,\ldots, y_m$. 
However it was not until the 1990s that publications started appearing that used multi-equation non-linear
systems. Following the trend with single recursion equations $y=G(x,y)$, the focus has been on systems 
$\mathbf{y} = \bG(x,\mathbf{y})$ where the $G_i(x,\mathbf{y})$ are power series with non-negative
coefficients.

In 1993 Lalley \cite{Lalley1993} considered polynomial systems in his study of random walks on free groups. In 1997 Woods 
\cite{Woods1997} used one particular system to analyze the asymptotic densities of monadic second-order definable
 classes of trees in the class 
of all trees. In the same year Drmota \cite{Drmota1997} extended Lalley's results to power series systems. 
Lalley's and Drmota's results were for a wide range of irreducible systems, that is, systems in which each variable
$y_i$ (eventually) depends on any variable $y_j$. An irreducible system of the kind they studied behaves in some ways 
like a single equation system, for example, the standard solution $y_i = T_i(x)$ is such that all the $T_i(x)$ have the same 
finite positive radius $\rho$,  the $\tau_i := T_i(\rho)$ are all finite, and the asymptotics for the coefficients of $T_i(x)$
is of the P\'olya form $C_i \rho^{-n}n^{-3/2}$. 

    Thus, as has been the case with single equation systems, it is desirable to find the radius of convergence $\rho$
  even though the solutions $T_i(x)$ may be fairly intractable.  The natural method was to extend the definition of the
  characteristic system from a single equation to a system of equations,  by adding the determinant of the Jacobian  
  of the system, set equal to zero to, to the original system. The solutions of such a characteristic system will again 
  be called characteristic points. 
  
  Under suitable conditions one can find $(\rho,\pmb{\tau})$ among the characteristic points. 
  To-date, however, the necessary study of characteristic points $(a,\mathbf{b})$ for systems, so 
  that one can locate $(\rho,\pmb{\tau})$, has been essentially non-existent. Filling this void is the 
  goal of this paper. In December, 2007, we discovered, in the polynomial systems studied by Flajolet and Sedgewick, and thus in the
more general systems studied by Drmota,
 that it was possible for there to be more than one characteristic point --- 
  this was communicated to Flajolet and appears as an example in \cite{FlSe2009} (p.~484). 
  The main objective of this paper is to give conditions to locate $(\rho,\pmb{\tau})$ among the 
  characteristic points, if indeed $(\rho,\pmb{\tau})$ is a characteristic point. A review of, and 
  improvements to, the theory of the single equation case 
  (see Proposition \ref{1 eq CP} and Corollary \ref{simple sys}) are also
  given.

  It turns out that, even if there is a characteristic point of a system $\mathbf{y} = \bG(x,\mathbf{y})$ 
   in the interior of the domain of $\bG(x,\mathbf{y})$, one cannot claim that the asymptotics for the 
   coefficients of the solutions $T_i(x)$ will be of the above P\'olya form 
  (see Examples \ref{counter Drmota C1=0}, \ref{counter Drmota C1>0}).\footnote{In 1997 
  Drmota \cite{Drmota1997} appears to claim that having a characteristic point in the interior of 
  the domain would lead to P\'olya asymptotics---however these examples show this not to be the case. 
  In his 2009 book \cite{Drmota2009} this hypothesis is replaced with one regarding minimal characteristic 
  points, which seems somewhat at odds with our Proposition \ref{antichain}, which says that the characteristic points 
  form an antichain with the characteristic point $(a,\mathbf{b})$ of interest having the \textit{largest} 
  value of $a$ among the characteristic points. Theorem \ref{drmota} of $\S$\ref{Drmota revisited} 
  is a restatement of Drmota's result, to make it clear which characteristic point is of interest, namely
  the one (if it exists) such that the Jacobian of $\bG(x,\mathbf{y})$ has 1 as its largest real eigenvalue. 
  \label{Drmota footnote}}
 
  We do not investigate the case when $(\rho, \pmb{\tau})$ is not a characteristic point, concluding only that it must be on the boundary of the domain of $\mathbf{G}(x,\mathbf{y})$ and that the spectral radius of the Jacobian of $\bG(x,\bfy)$ at $(\rho, \pmb{\tau})$ is $<1$.  Note that for polynomial systems, $(\rho, \pmb{\tau})$ is always a characteristic point, and in general the spectral
  radius condition (see Lemma \ref{ev of J(x,Tx)}) makes it possible to recognize when $(\rho, \pmb{\tau})$ is among the characteristic points.

\subsection{Outline}
Appendix \ref{background} discusses standard background and notation for power series, including a statement, Proposition \ref{PF Prop}, of the key results of Perron-Frobenius theory.

Section \ref{sec WellConditioned} sets up the equational systems of interest.  Section \ref{Char section} begins by reducing to the case where the Jacobian matrix $J_\bG(x,\bfy)$ has nonzero entries and then proceeds to the more interesting discussion of properties of characteristic points, including notably Proposition \ref{antichain}. This leads to the main result of the section, Theorem \ref{location}, followed by the single equation result, Proposition \ref{1 eq CP}.  Section \ref{sec eigenpoints} introduces an eigenvalue criterion for critical points leading to the main result of the paper, Theorem \ref{RCP Thm}.  Section \ref{Drmota sec} then uses the preceding results to correct an inaccuracy in the literature.  The main body of the paper concludes with some open problems.

Appendix \ref{ex section} contains a large number of examples illustrating the various possibilities and results.  It is best read along side the main body of the paper.

\section{Well-conditioned systems}\label{sec WellConditioned}
The next definition gives a version of essentially well-known conditions which ensure that a system
 $\mathbf{y}=\bG(x,\mathbf{y})$ as in \eqref{system}
 has  power series solutions $y_i=T_i(x)$ of the type encountered in generating functions 
 for classes of trees. (See Drmota \cite{Drmota1997}, \cite{Drmota2009}.)
 
\begin{definition}\label{WellConditioned}
A system $\mathbf{y}=\bG(x,\mathbf{y})$ is \textit{well-conditioned} if it satisfies
\begin{thlist}
\item
each $G_i(x,\mathbf{y})$ is a power series with nonnegative coefficients
\item 
$\bG(x,\mathbf{y})$ is holomorphic in a neighborhood of the origin
\item
$\bG(0,\mathbf{y}) = \mathbf{0}$
\item for all $i$,
$G_i(x,\mathbf{0}) \neq  0$ 
%\item \fbox{OMIT}
%$\displaystyle \det\big(I - J_\bG(0,\mathbf{0})\big) \neq 0$ where $J_\bG$ is the Jacobian matrix
%$\displaystyle \left(\dfrac{\partial G_i}{\partial y_j}\right)$
\item
the system is irreducible\footnote{This means the non-negative matrix $J_\bG$ is irreducible.}
\item
for some $i,j,k$,
$\displaystyle 
\frac{\partial^{2}G_i(x,\mathbf{y})}{\partial {y_j}\partial y_k}\ \neq\ 0$
(so the system is nonlinear in $\mathbf{y}$).
\end{thlist}
\end{definition}

\begin{remark}
Since $\bG(x,\bfy)$ has non-negative coefficients, condition {\rm(b)} is equivalent to  {\rm(b$'$)}: $\bG(x,\mathbf{y})$ converges at some positive 
$(a,\mathbf{b})$.
\end{remark}

\subsection{Solutions of Well-Conditioned Systems}

The following proposition is standard.
\begin{proposition}\label{WellKnown}
If $\mathbf{y}=\bG(x,\mathbf{y})$ is a well-conditioned system then
the following hold: 
%\begin{thlist}
\begin{itemize}
\item[(i)]
There is a unique vector $\bT(x)$ of formal power series $T_i(x)$ with nonnegative coefficients such that one has the formal identity
%the solution
\begin{equation}\label{the sol}
\bT(x)\ =\ \bG\big(x,\bT(x)\big).
\end{equation}
\item[(ii)]
Equation \eqref{the sol} gives a recursive procedure to find the coefficients of the $T_i(x)$.
\item[(iii)]
Equation \eqref{the sol} holds for $x\in[0,\infty]$.
\item[(iv)]
All $T_i(x)$ have the same radius of convergence $\rho\in(0,\infty)$ and all
$T_i(x)$ converge at $\rho$, that is,
$\tau_i :=T_i(\rho)<\infty$.
  \item[(v)]
Each $T_i(x)$ has a singularity at $x=\rho$.
\item[(vi)]
If $(\rho,\pmb{\tau})$ is in the
 interior of the domain of $\bG(x,\mathbf{y})$ then 
$$
   \det\big(I - J_\bG(\rho,\pmb{\tau})\big)\ =\ 0.
$$
\end{itemize}
\end{proposition}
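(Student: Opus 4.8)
The plan is to treat the six parts in roughly the order stated, since later parts lean on earlier ones. For (i), I would argue that since $\bG(0,\mathbf{y})=\mathbf{0}$ and each $G_i$ has nonnegative coefficients, the map $\bP\mapsto \bG(x,\bP)$ is a contraction on the space of power-series vectors with the $x$-adic (order) metric: truncating at degree $n$, the right-hand side determines coefficients up to degree $n$ from coefficients up to degree $n-1$, because every monomial in $y_j$ appearing on the right is multiplied by a positive power of $x$ (this is exactly conditions (b), (c)). A standard fixed-point / successive-approximation argument then gives a unique formal solution $\bT(x)$, and the nonnegativity of the coefficients of $\bT$ is inherited at each stage of the iteration; this simultaneously proves (ii), since the iteration $\bT^{(0)}=\mathbf 0$, $\bT^{(n+1)}=\bG(x,\bT^{(n)})$ is precisely the recursion, stabilizing in each fixed degree after finitely many steps.

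For (iii), the point is that the formal identity, once established, can be evaluated on $[0,\infty]$ because all series have nonnegative coefficients, so convergence (possibly to $+\infty$) is monotone and substitution commutes with the limit; I would phrase this as: for $x\ge 0$ the partial sums of $\bT(x)$ increase to $\bT(x)\in[0,\infty]^m$, and likewise $\bG(x,\bT(x))$ is a supremum of polynomials in the partial sums, so \eqref{the sol} holds pointwise on $[0,\infty]$ by the monotone convergence theorem. Part (iv) is where irreducibility and nonlinearity enter and is the main obstacle. First, each $T_i$ has positive radius of convergence because $\bG$ is holomorphic near the origin (condition (b)) and the implicit function theorem (or the contraction argument with analytic bounds) gives an analytic solution on a neighborhood of $0$; all $T_i$ share this neighborhood. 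For finiteness of $\rho$, I would use nonlinearity: condition (f) forces some $T_i$ to dominate a series whose coefficients grow at least geometrically — e.g. a genuine quadratic-type recursion $T_i \succeq c\, x^r + \cdots + d\, x^s T_j T_k + \cdots$ has coefficients bounded below by those of an algebraic function with a finite radius — so $\rho<\infty$. That all the $T_i$ have the \emph{same} radius, and all converge at $\rho$, is the genuinely delicate part: here one uses irreducibility of $J_\bG$ to propagate a singularity of one $T_i$ to all the others (each $y_i$ eventually depends on each $y_j$, so if $T_j$ blew up strictly before $\rho$ then $T_i$ would too, contradicting the choice of $\rho$ as the minimum), and one uses the nonnegativity together with Pringsheim's theorem to locate the singularity on the positive real axis. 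Boundedness of $\tau_i=T_i(\rho)$ at that common radius is then forced: if some $T_i(\rho)=\infty$, irreducibility pushes all $T_j(\rho)=\infty$, and then evaluating \eqref{the sol} at $x$ slightly below $\rho$ together with the analytic structure contradicts $\rho$ being the radius — this is the argument one has to set up carefully, and it is where I expect to spend the most effort.

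Part (v) then follows from (iv): if every $T_i$ were analytic at $\rho$ we could extend the solution past $\rho$ by the implicit function theorem applied at $(\rho,\pmb\tau)$ — \emph{provided} the relevant Jacobian is invertible there — and this would contradict $\rho$ being the radius of convergence. So the natural route is: either $\det(I-J_\bG(\rho,\pmb\tau))\neq 0$, in which case the implicit function theorem extends the analytic solution beyond $\rho$, contradicting (v)/(iv); or $\det(I-J_\bG(\rho,\pmb\tau))=0$. When $(\rho,\pmb\tau)$ is interior to the domain of $\bG$ (the hypothesis of (vi)), $J_\bG(\rho,\pmb\tau)$ is a well-defined nonnegative matrix, and the previous dichotomy gives exactly $\det(I-J_\bG(\rho,\pmb\tau))=0$, proving (vi). To make the contradiction in (v) airtight one wants the monotonicity of coefficients: a power series with nonnegative coefficients that is analytic at its (positive, finite) radius of convergence would actually converge on a strictly larger disk, which is absurd — so it suffices to show $\bT$ extends analytically across $\rho$, and the implicit function theorem does this as soon as $I-J_\bG$ is invertible at $(\rho,\pmb\tau)$. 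I would present (v) and (vi) together as two faces of this single implicit-function-theorem argument, flagging that the interiority hypothesis in (vi) is exactly what is needed to guarantee $J_\bG(\rho,\pmb\tau)$ is finite and that $\bG$ is holomorphic in a full neighborhood of that point.
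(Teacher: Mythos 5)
Your plan uses exactly the same three tools the paper invokes in its one-line proof (the Law of Permanence, Pringsheim's Theorem, and the Implicit Function Theorem), and fills in the routine fixed-point argument for (i)--(ii); so the route is the same, just written out. Parts (i)--(iii), (v) and (vi) are fine: the Pringsheim step gives (v) directly, and then (vi) is the standard ``if $\det(I-J_\bG(\rho,\pmb\tau))\neq 0$ the implicit function theorem would give an analytic continuation through $\rho$, contradicting (v)'' argument (your ``two faces of one IFT argument'' phrasing slightly misattributes (v) to the IFT rather than to Pringsheim, but the content is right).

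The one place your sketch is off is the finiteness of $\tau_i$ in (iv). You write that, if all $T_j(\rho)=\infty$, ``evaluating \eqref{the sol} at $x$ slightly below $\rho$ together with the analytic structure contradicts $\rho$ being the radius.'' That is not where the contradiction lives: the functional equation holds happily for all $x<\rho$, so nothing about $\rho$ being the radius is violated. The actual contradiction comes from the functional equation itself together with nonlinearity and irreducibility. Pick $i,j,k$ with $\partial^2 G_i/\partial y_j\partial y_k\neq 0$; then for $x\in(0,\rho)$ one has $T_i(x)\ge c\,x^a\,T_j(x)\,T_k(x)$ for some monomial of $G_i$. Irreducibility (plus positivity of the constant terms, via (d)) lets you chain dependencies so that each $T_\ell$ bounds each $T_m$ from below up to multiplicative constants on $[\rho/2,\rho)$, so all $T_\ell(x)$ are comparable as $x\to\rho^-$. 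If they all tend to $\infty$, the displayed inequality forces $T_i(x)\ge c'\,T_i(x)^2$ near $\rho$, which is impossible. That is the argument you should set up; the rest of your (iv) (positive radius from holomorphy at the origin, $\rho<\infty$ from nonlinearity, equal radii and Pringsheim for the positive real singularity) is fine as sketched.
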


\begin{proof}Apply Proposition \ref{permanence}, Pringsheim's Theorem, and the Implicit Function Theorem.
\end{proof}

%%%%%%%%%%%%%%%%%%%%%%%%%%%%%%

The sequence $\bT(x)$ of power series described 
in Proposition \ref{WellKnown} is
the \textit{standard solution} of the system, and the point $(\rho,\pmb{\tau})$ is the
\textit{extreme point} (of the standard solution, or of the system). 
{}From \eqref{the sol} one has
$\bT(0) = \mathbf{0}$, so the standard solution goes through the origin.
The set 
$$
\domp(\bG)\ :=\ \big\{(a,\mathbf{b}) : a,b_1,\ldots,b_m > 0\text{ and } G_i(a,\mathbf{b}) < \infty, 1\le i\le m
\big\}
$$
is the \textit{positive domain} of $\bG$. For $(a,\mathbf{b})\in\domp(\bG)$ let
$$
\Lambda(a,\mathbf{b})\ :=\ \Lambda\big(J_\bG(a,\mathbf{b})\big),
$$
the largest real eigenvalue of the Jacobian matrix $J_\bG(a,\mathbf{b})$.
Since $J_\bG(a,\mathbf{b})$ is a matrix with non-negative entries, 
$\Lambda(a,\mathbf{b})$ is the spectral radius of $J_\bG(a,\mathbf{b})$.
 
 %%%%%%%% CHARACTERISTIC SYSTEMS AND ROOTS
 
 \subsection{Characteristic Systems, Characteristic Points}
 
Flajolet and Sedgewick \cite{FlSe2009} VII.6 define the \textit{characteristic system}  
of \eqref{system} to be 
\[%\begin{equation}\label{char system}
\left\{
\begin{array}{l c l}
y_1&=&G_1(x,y_1,\ldots,y_m)\\
&\vdots&\\
y_m&=&G_m(x,y_1,\ldots,y_m)\\
0&=&  \det\big(I - J_\bG(x,\mathbf{y})\big).
\end{array}
\right.
\]%\end{equation}
Let the positive solutions $(a,\mathbf{b})\in \mathbb{R}^{m+1}$ to this system be called the
 \textit{characteristic points} of the system.\footnote{Flajolet and Sedgewick 
 (\cite{FlSe2009} Chapter VII p.~468) only consider
 characteristic points in the interior of $\domp(\bG)$.} 
 Requiring that $(\rho,\pmb{\tau})$ be a characteristic point in the interior of the 
 domain of $\bG(x,\mathbf{y})$ has been crucial to proofs that $x=\rho$ 
 is a square-root singularity of the 
 $T_i(x)$, leading to the asymptotics 
$t_i(n)\ \sim\ C_i\rho^{-n}n^{-3/2}$ for the non-zero coefficients. 
There is, thus, considerable interest in finding practical computational means of estimating 
$\rho$.

For the case that the $G_i(x,\mathbf{y})$ are \textit{polynomials} we know that
$(\rho,\pmb{\tau})$ will be among the characteristic points and in the interior of
 the domain of $\bG$. 
However until now, even in the polynomial case, no general 
attempt has been made to characterize $(\rho,\pmb{\tau})$ among the characteristic points of the system\footnote{When dealing with polynomial systems in Chapter VII of \cite{FlSe2009}, Flajolet and Sedgewick do not use
characteristic systems---they prefer to work with the singularities, and their connections via
branches, of the algebraic curves $y_i(x)$ defined by the system.
}---with one exception, namely  
the 1-equation systems. 

%%%%%%  THE THEORY OF CHARACTERISTIC ROOTS 

\section{Characteristic Points of Well-Conditioned Systems}\label{Char section}
 
{}From now on it is assumed, unless stated otherwise, that we are working with a 
well-conditioned system $\Sigma: \mathbf{y} = \bG(x,\mathbf{y})$ of $m$ equations.

\subsection{Making substitutions in an irreducible system}

 A careful analysis of the characteristic points of $\Sigma$
  is easier if $J_\bG(a,\mathbf{b})$ is a positive matrix for positive
 points $(a,\mathbf{b})$; this is the case precisely when no entry of $J_\bG(x,\mathbf{y})$ is 0.
Fortunately there is a substitution procedure to transform the original system 
$\Sigma$ into 
a well-conditioned system $\Sigma^\star$ with
\begin{thlist}
\item[i]
exactly the same positive solutions  $(a,\mathbf{b})$, and 
\item[ii] 
exactly the same set $\mathcal{CP}$ of characteristic points,
\end{thlist}
and such that for the new system  $\bfy = \bG^\star(x,\bfy)$,
the Jacobian $J_{\bG^\star}(x,\mathbf{y})$  has no zero entries.
 Indeed, given any positive integer $n$, one can
 carry out the substitutions so that all $n$th partial derivatives of $\bG(x,y)$
 with respect to the $y_i$ are non-zero. The goal of this section is to prove
 these claims.
 
The simplest substitutions are $n$-fold iterations 
$\bG^{(n)}$ of the transformation $\bG$.  These are used in \cite{FlSe2009} (see p.~492) as they suffice for
\textit{aperiodic}\footnote{A well-conditioned system
$\mathbf{y} = \bG(x,\mathbf{y})$ is \textit{aperiodic} if the coefficients of 
each $T_i(x)$ are eventually positive, $\bT(x)$ being the standard solution---see
\cite{FlSe2009}, p.~489.}
polynomial systems $\Sigma$.  In general, however, iteration of $\bG$
 does not suffice to obtain a
system $\Sigma^\star$ as described above---see Example \ref{ex subst}.

Given a system $\Sigma: \mathbf{y}=\bG(x,\mathbf{y})$, a \textit{minimal self-substitution transformation} creates the
system $\Sigma^{(\alpha)}: \mathbf{y}=\bG^{(\alpha)}(x,\mathbf{y})$ by selecting $\alpha\in[0,1]$ and
a pair of indices $i,j$ (possibly the same) with
$\partial G_i(x,\mathbf{y})/\partial y_j\ \neq\ 0$ and then 
substituting $\alpha G_j(x,\mathbf{y}) + (1-\alpha) y_j$ for a single occurrence of $y_j$ in the power series $G_i$. 
Suppose $ H(x,y_0;\mathbf{y} )$ is the result of replacing the single occurrence of $y_j$ in $G_i$ by 
a new variable $y_0$. Then the system $\Sigma^{(\alpha)}$ is
\begin{equation*}
\Sigma^{(\alpha)}: \quad \left\{
\begin{array}{ r c l}
y_1& =&  G_1^{(\alpha)} (x,\mathbf{y})\ :=\ G_1(x,\mathbf{y})\\
&\vdots&\\
y_i&=& G_i^{(\alpha)}(x,\mathbf{y})\ :=\ H\big(x,\alpha G_j(x,\mathbf{y}) + (1-\alpha)y_j); \mathbf{y}\big)\\
&\vdots&\\
y_m& =& G_m^{(\alpha)} (x,\mathbf{y})\ :=\ G_m(x,\mathbf{y})
\end{array}\right.
\end{equation*}
More generally, a system $\Sigma^\star: \mathbf{y}=\bG^\star(x,\mathbf{y})$ is a \textit{self-substitution transform} of
$\Sigma: \mathbf{y}=\bG(x,\mathbf{y})$ if there is a sequence $\Sigma_0, \Sigma_1,\ldots,\Sigma_r$ of systems
such that $\Sigma = \Sigma_0$, $\Sigma^\star = \Sigma_r$, and for $0\le i <r$ the system 
$\Sigma_{i+1}$ is a minimal self-substitution transform of $\Sigma_i$. 

\begin{lemma}\label{subs triv}
For $\Sigma^{(\alpha)}$ and $\Sigma^\star$ as described above:
\begin{thlist}
\item
$\Sigma = \Sigma_0$.
\item
If $\Sigma$ is irreducible and $\alpha\in[0,1)$ then $\Sigma^{(\alpha)}$ is irreducible.
\item
Suppose $\Sigma$ is irreducible. Then $\Sigma^\star$ is irreducible iff each step $\Sigma_i$
is irreducible.
\item
Suppose $\Sigma$ is well-conditioned and $\alpha\in[0,1]$. 
Then $\Sigma^{(\alpha)}$ is
well-conditioned iff it is irreducible. In particular $\Sigma^{(\alpha)}$ is well-conditioned if $\alpha\in[0,1)$.
\item
Suppose $\Sigma$ is well-conditioned. 
Then $\Sigma^\star$ is well-conditioned iff it is irreducible.
\end{thlist}
\end{lemma}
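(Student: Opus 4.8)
Part (a) and the forward implications of (d) and (e) are immediate --- (a) unwinds the definition of a self-substitution transform, and in (d) and (e) ``well-conditioned $\Rightarrow$ irreducible'' is one of the defining clauses --- so the plan is to concentrate on (b), on the nontrivial direction of (c), and on the nontrivial direction of (d), with (e) then following by induction. Two preliminary observations organize everything: throughout, every power series occurring in any $\Sigma_i$ has non-negative coefficients (the replacement $y_q\mapsto\alpha G_q+(1-\alpha)y_q$ with $\alpha\in[0,1]$ preserves this under composition), so there is never any cancellation; and irreducibility of a system is exactly strong connectivity of its dependency digraph $D$ (vertices $1,\dots,m$, with an edge $k\to l$ precisely when $\partial G_k/\partial y_l\ne0$).

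The engine for (b) and (c) is a single chain-rule computation. Consider one minimal self-substitution on a pair $(p,q)$: earmark one occurrence of $y_q$ in $G_p$, write $G_p=H(x,y_q;\mathbf y)$, and replace that occurrence by $\alpha G_q+(1-\alpha)y_q$. Only the $p$th Jacobian row changes, and
\[
\frac{\partial G_p^{(\alpha)}}{\partial y_l}\;=\;\frac{\partial H}{\partial y_0}\cdot\frac{\partial}{\partial y_l}\bigl(\alpha G_q+(1-\alpha)y_q\bigr)\;+\;\frac{\partial H}{\partial y_l},
\]
evaluated at $y_0=\alpha G_q+(1-\alpha)y_q$, with $\partial H/\partial y_0\ne0$ since the earmarked occurrence comes from a monomial with nonzero coefficient. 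For $\alpha<1$ one reads off that every old edge out of $p$ survives (the summand $\partial H/\partial y_l$ keeps $p\to l$ for $l\ne q$, the $(1-\alpha)$ summand keeps $p\to q$), so $D$ can only gain edges and stays strongly connected: that is (b). For (c) the ``if'' direction is trivial since $\Sigma^\star=\Sigma_r$; for ``only if'' I would prove the contrapositive, that a minimal self-substitution cannot destroy reducibility. Here $D$ fails to be strongly connected exactly when some proper nonempty vertex set $S$ has no edges leaving it, and such an $S$ stays closed after the substitution: the only edges that move are those out of $p$, which matters only if $p\in S$, and then $q\in S$ as well, while the displayed formula forces every new out-neighbour of $p$ to lie in $N^+(p)\cup N^+(q)\subseteq S$. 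Thus the set of reducible indices among $\Sigma_0,\dots,\Sigma_r$ is upward closed, and since $\Sigma_0=\Sigma$ is irreducible, $\Sigma^\star$ irreducible forces every $\Sigma_i$ irreducible.

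For the substantive direction of (d) --- $\Sigma$ well-conditioned, $\alpha\in[0,1]$, $\Sigma^{(\alpha)}$ irreducible, conclude $\Sigma^{(\alpha)}$ well-conditioned --- I would verify the six clauses of Definition \ref{WellConditioned} for $\Sigma^{(\alpha)}$: non-negativity of coefficients is clear; holomorphy near the origin and $\bG^{(\alpha)}(0,\mathbf y)=\mathbf 0$ follow from $\bG(0,\mathbf y)=\mathbf 0$ (so $G_q$ vanishes at $x=0$ and every monomial of $H$ carries a positive power of $x$); $G_p^{(\alpha)}(x,\mathbf 0)\ge H(x,0;\mathbf 0)=G_p(x,\mathbf 0)\ne0$ by monotonicity of $H$ in the substituted slot; irreducibility is assumed; and non-linearity is the only clause needing thought. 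For $\alpha<1$ the support of $G_p^{(\alpha)}$ contains that of $G_p$ (each monomial $x^a y_0^{d}\mathbf y^{\mathbf d}$ of $H$ contributes $(1-\alpha)^{d}x^a y_q^{d}\mathbf y^{\mathbf d}$), so a monomial of $G_p$ of $\mathbf y$-degree $\ge2$ survives. For $\alpha=1$ a monomial $x^a y_0^{d_0}\mathbf y^{\mathbf d}$ of $H$ with $d_0+|\mathbf d|\ge2$ contributes $x^a G_q^{d_0}\mathbf y^{\mathbf d}$, and expanding $G_q^{d_0}$ --- using that $G_q$ has a nonzero pure-$x$ part (its fourth defining clause) and, by irreducibility, a monomial in some $y_s$ --- again produces a monomial of $\mathbf y$-degree $\ge2$. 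The ``in particular'' clause of (d) is (b) together with the implication just proved. Finally, for the nontrivial direction of (e), use (c) to make every $\Sigma_i$ irreducible and then induct along $\Sigma_0,\dots,\Sigma_r$: the base case is the hypothesis, and each step is (d) applied to the well-conditioned $\Sigma_i$ with irreducible successor $\Sigma_{i+1}$.

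The main obstacle is the case $\alpha=1$. For $\alpha<1$ the dependency digraph only gains edges, which makes both (b) and the non-linearity clause of (d) essentially automatic; but for $\alpha=1$ an occurrence of $y_q$ is replaced outright by $G_q$, so edges of $D$ can be deleted. One must then argue separately that this cannot promote a reducible system to an irreducible one --- the closed-set argument above, which never needs the edge $p\to q$ to survive --- and that it cannot destroy non-linearity, which is exactly the degree bookkeeping above and is the point where the clauses ``$G_i(x,\mathbf 0)\ne0$'' and ``irreducible'' for $\Sigma$ are used.
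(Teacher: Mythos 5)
Your proof is correct and supplies the details the paper declares ``Straightforward'': the chain-rule identity for $\partial G_p^{(\alpha)}/\partial y_l$, combined with the no-cancellation observation for nonnegative series, gives (b) and the ``reducibility is preserved'' step of (c), and (d)/(e) reduce to a clause-by-clause check in which only non-linearity at $\alpha=1$ needs real thought, which you handle correctly by drawing a $y_s$-dependent monomial of $G_q$ from irreducibility of $\Sigma$. One minor simplification: the paper's $H$ is linear in the substituted slot (it writes $H(x,y_0;\mathbf{y})=A(x,\mathbf{y})y_0+B(x,\mathbf{y})$), so $d_0\in\{0,1\}$ throughout and the nonzero pure-$x$ part of $G_q$ is never actually needed in the non-linearity bookkeeping --- that clause is only required for $d_0\ge 2$, which does not occur here.
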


\begin{proof}
Straightforward.
\end{proof}

\begin{lemma} \label{subs preserv}
Suppose \[\Sigma^\star: \mathbf{y} = \bG^\star(x,\mathbf{y})\] is a self-substitution transform of 
a well-conditioned $\Sigma: \mathbf{y} = \bG(x,\mathbf{y})$. 
Then the following hold:
\begin{thlist}
\item
$\bG(x,\mathbf{y})$ and $\bG^\star (x,\mathbf{y})$ have the same positive domain of convergence.
\item
 $\Sigma^\star$ and $\Sigma$ have the same positive solutions and the same characteristic points.
\item
If $\Sigma^\star$ is  well-conditioned then $\Sigma$ and $\Sigma^\star$ 
have the same standard solution $\bf{T}(x)$ and extreme point $(\rho,\pmb{\tau})$.
\item
If $\Sigma^\star$ is  well-conditioned then the Jacobians $J_\bG(x,\mathbf{y})$ and $J_{\bG^\star}(x,\mathbf{y})$
have all entries finite at the same positive points $(a,\mathbf{b})$ in the domain of $\bG$.
\end{thlist}
\end{lemma}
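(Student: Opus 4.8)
The plan is to reduce to the case of a single \emph{minimal} self-substitution transform and then analyze that step by hand. Given a witnessing chain $\Sigma=\Sigma_0,\dots,\Sigma_r=\Sigma^\star$, I would induct on $r$, the case $r=0$ being trivial. For the inductive step it suffices to treat a single minimal self-substitution transform $\Sigma^{(\alpha)}$ of a system satisfying conditions (a)--(d),(f) of Definition~\ref{WellConditioned} (everything except irreducibility): the elementary computations behind Lemma~\ref{subs triv} show that these five conditions are inherited by $\Sigma^{(\alpha)}$, so every $\Sigma_t$ enjoys them; and when $\Sigma^\star$ is assumed well-conditioned, Lemma~\ref{subs triv}(c),(e) forces every $\Sigma_t$ to be irreducible and hence well-conditioned, so the inductive hypothesis applies to each prefix. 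Fix such a step: one occurrence of $y_j$ inside $G_i$ is replaced by $\alpha G_j(x,\mathbf y)+(1-\alpha)y_j$. Since only one occurrence is touched, $H(x,y_0;\mathbf y)$ is affine in $y_0$, so I would write $H=H_0(x,\mathbf y)+y_0H_1(x,\mathbf y)$ with $H_0,H_1$ power series in $x,\mathbf y$ having nonnegative coefficients (possibly still involving $y_j$); then $G_i=H_0+y_jH_1$ and $G_i^{(\alpha)}=H_0+(1-\alpha)y_jH_1+\alpha G_jH_1$, a sum of nonnegative-coefficient series, while all other equations are unchanged. Every part then follows by comparing $G_i$ with $G_i^{(\alpha)}$ through this decomposition.

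For (a) I would compare $G_i(a,\mathbf b)$ and $G_i^{(\alpha)}(a,\mathbf b)$ termwise at a positive point. If $(a,\mathbf b)\in\domp(\bG)$ then $H_1(a,\mathbf b)<\infty$ (from $y_jH_1\le G_i<\infty$ and $b_j>0$) and $G_j(a,\mathbf b)<\infty$, so $G_i^{(\alpha)}(a,\mathbf b)<\infty$; conversely finiteness of $G_i^{(\alpha)}(a,\mathbf b)$ forces $H_1(a,\mathbf b)<\infty$, hence $G_i(a,\mathbf b)<\infty$ --- directly when $\alpha<1$, and when $\alpha=1$ using that $G_j(a,\mathbf b)>0$ by condition (d). Since the other coordinates are untouched, this gives equality of the positive domains. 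For (b), at any positive solution $(a,\mathbf b)$ of either system one has $b_j=G_j(a,\mathbf b)$ --- immediate when $j\neq i$, and a one-line algebraic identity from $b_i=G_i^{(\alpha)}(a,\mathbf b)$ and the affine form when $j=i$ --- so $\alpha G_j(a,\mathbf b)+(1-\alpha)b_j=b_j$ and therefore $G_i(a,\mathbf b)=G_i^{(\alpha)}(a,\mathbf b)$: the two systems have the same positive solutions. For characteristic points I would compute, at such a solution, the $\mathbf y$-Jacobian row $\partial G_i^{(\alpha)}/\partial y_k=\partial G_i/\partial y_k+\alpha H_1(\partial G_j/\partial y_k-[k=j])$; equivalently, row $i$ of $I-J_{\bG^{(\alpha)}}$ is row $i$ of $I-J_\bG$ plus $\alpha H_1$ times row $j$ of $I-J_\bG$. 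When $i\neq j$ this is an elementary row operation, so $\det(I-J_{\bG^{(\alpha)}})=\det(I-J_\bG)$ at the solution; when $i=j$ it rescales row $i$ by $1+\alpha H_1>0$, so the two determinants agree up to a positive factor. Either way $\det(I-J_\bG)$ and $\det(I-J_{\bG^{(\alpha)}})$ vanish together, so the characteristic points coincide.

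For (c), let $\bT(x)$ be the standard solution of $\Sigma$; substituting formally into $G_i^{(\alpha)}$ and using $G_j(x,\bT(x))=T_j(x)$ gives $\alpha G_j(x,\bT)+(1-\alpha)T_j=T_j$, whence $G_i^{(\alpha)}(x,\bT)=H(x,T_j;\bT)=G_i(x,\bT)=T_i$, and the other coordinates are trivial. So $\bT$ is a nonnegative formal power series solution of $\Sigma^{(\alpha)}$, and by the uniqueness in Proposition~\ref{WellKnown}(i) (applicable since $\Sigma^{(\alpha)}$ is well-conditioned) it is the standard solution of $\Sigma^{(\alpha)}$; the extreme point, being $(\rho,\bT(\rho))$ with $\rho$ the common radius of convergence, is therefore the same. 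For (d), differentiating $G_i^{(\alpha)}=H_0+(1-\alpha)y_jH_1+\alpha G_jH_1$ expresses each $\partial G_i^{(\alpha)}/\partial y_k$ as a nonnegative-coefficient combination of $\partial H_0/\partial y_k$, $H_1$, $\partial H_1/\partial y_k$, $\partial G_j/\partial y_k$ and $G_j$, while $\partial G_i/\partial y_k=\partial H_0/\partial y_k+[k=j]H_1+y_j\partial H_1/\partial y_k$; working at a point of the common domain I would check that all entries of row $i$ of $J_{\bG^{(\alpha)}}$ are finite iff all entries of rows $i$ and $j$ of $J_\bG$ are finite (again splitting off the $\alpha=1$ case using $G_j(a,\mathbf b)>0$), and since the other rows are unchanged this yields the claim.

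The main obstacle is the characteristic-point statement in (b): recognizing the passage from $I-J_\bG$ to $I-J_{\bG^{(\alpha)}}$ as an elementary row operation (a positive rescaling when $i=j$) is the heart of the argument, and, together with handling the genuine self-substitution case $i=j$ throughout and keeping the finiteness bookkeeping straight, it carries essentially all the content. The other point requiring care is organizational: pinning down which weakened form of ``well-conditioned'' is preserved forward along the chain, and using Lemma~\ref{subs triv} to propagate full well-conditionedness of $\Sigma^\star$ backward, so that the induction is legitimate.
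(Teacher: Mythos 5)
Your proof is correct and follows essentially the same route as the paper: reduce to a single minimal self-substitution, decompose $H(x,y_0;\mathbf y)=H_0+y_0H_1$ (the paper's $B+y_0A$), track finiteness of $H_0,H_1$ to get (a) and (d), use the affine identity to show the solution sets and formal standard solution coincide for (b) and (c), and observe that $I-J_{\bG^{(\alpha)}}$ arises from $I-J_\bG$ by an elementary row operation at a solution, so the characteristic determinants vanish together. You are somewhat more explicit than the paper on the bookkeeping that lets the induction proceed (propagating well-conditionedness back along the chain via Lemma~\ref{subs triv}) and on the $i=j$, $\alpha=1$ edge cases, but these are elaborations of the same argument rather than a different one.
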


\begin{proof}
It suffices to prove this for the case that \[\Sigma^\star=\Sigma^{(\alpha)},\] a minimal self-substitution transform of 
$\Sigma$ as described above, namely substituting $\alpha G_j(x,\mathbf{y}) + (1-\alpha) y_j$ for a single occurrence 
of $y_j$ in the power series $G_i(x,\mathbf{y})$.  Let
$$
H(x,y_0; \mathbf{y}) = A(x,\mathbf{y}) y_0 + B(x,\mathbf{y}),
$$
where $A(x,\mathbf{y}) $ and $B(x,\mathbf{y}) $ are power series with non-negative coefficients, and neither is 0,
be such that
\begin{eqnarray*}
G_i(x,\mathbf{y})&=& A(x,\mathbf{y}) y_j + B(x,\mathbf{y})\\
G_i^{(\alpha)}(x,\mathbf{y})
&=& A(x,\mathbf{y})\big(\alpha G_j(x,\mathbf{y})+ (1-\alpha) y_j\big) + B(x,\mathbf{y}).
\end{eqnarray*}
For item (a), first suppose that $(a,\mathbf{b}) \in\domp(\bG)$. Then $A(a,\mathbf{b})$ and $B(a,\mathbf{b})$ are finite,
so $G_i^{(\alpha)}(a,\mathbf{b})$ is finite. This suffices to show $(a,\mathbf{b}) \in\domp(\bG^{(\alpha)})$ 
since the other $\bG_j^{(\alpha)}(x,\mathbf{y})$ are the same as those in $\Sigma$. Conversely, suppose 
$(a,\mathbf{b}) \in\domp(\bG^{(\alpha)})$. Again $A(a,\mathbf{b})$ and $B(a,\mathbf{b})$ are finite, so 
$G_i(a,\mathbf{b})$ is finite; and as before, the other $G_j(a,\mathbf{b})$ are finite. 
Thus $(a,\mathbf{b}) \in\domp(\bG)$.

For item (b),
if $i\neq j$ then clearly the two systems have the same positive solutions 
since $y_j = G_j(x,\mathbf{y})$ is in both systems. 

If $i=j$ first note that every positive solution of $\Sigma$ is also a solution of $\Sigma^{(\alpha)}$.
For the converse we have
\begin{eqnarray*}
G_i^{(\alpha)}(x,\mathbf{y})
&=& 
A(x,\mathbf{y})\Big(\alpha \big(A(x,\mathbf{y}) y_i + B(x,\mathbf{y})\big)+ (1-\alpha) y_i\Big) + B(x,\mathbf{y})\\
&=& \alpha A(x,\mathbf{y})^2 y_i + \alpha A(x,\mathbf{y}) B(x,\mathbf{y})
+ (1-\alpha) A(x,\mathbf{y})y_i + B(x,\mathbf{y}).
\end{eqnarray*}

Let $(a,\mathbf{b})$ be a positive solution of $\Sigma^{(\alpha)}$.
Then  $(a,\mathbf{b})$ 
solves all equations $y_j = G_j(x,\mathbf{y})$ of $\Sigma$ where $j\neq i$ since
these equations are also in $\Sigma^{(\alpha)}$. Now 
\begin{eqnarray*}
b_i\ &=& G_i^{(\alpha)}(a,\mathbf{b})\\
&=&\ \alpha A(a,\mathbf{b})^2 b_i + \alpha A(a,\mathbf{b}) B(a,\mathbf{b})
+ (1-\alpha) A(a,\mathbf{b})b_i + B(a,\mathbf{b}),
\end{eqnarray*}
so
$$
\Big(1 - \alpha A(a,\mathbf{b})^2 - (1-\alpha) A(a,\mathbf{b})\Big)b_i\ =\ \Big(1+\alpha A(a,\mathbf{b})\Big) B(a,\mathbf{b}).
$$
Since $1+\alpha A(a,\mathbf{b})$  is positive, one can cancel  to obtain
$$
b_i\ =\ A(a,\mathbf{b}) b_i + B(a,\mathbf{b}),
$$
which says that $(a,\mathbf{b})$ satisfies the $i$th equation of $\Sigma$, and thus all the
equations of $\Sigma$. Consequently $\Sigma$ and $\Sigma^{(\alpha)}$ have the same positive 
solutions $(a,\mathbf{b})$.

To show  both systems have the same characteristic points, compute 
\begin{eqnarray}
\frac{\partial  G_i^{(\alpha)}(x,\mathbf{y} )}{\partial y_k} 
&=&  \frac{\partial  G_i(x,\mathbf{y} )}{\partial y_k}
\ +\ \alpha\frac{\partial  A(x,\mathbf{y} )}{\partial y_k} \cdot \big(G_j(x,\mathbf{y}) -y_j\big)\nonumber\\
&&\ +\ 
\alpha A(x,\mathbf{y} )\cdot \Big( \frac{\partial G_j(x,\mathbf{y})}{\partial y_k}- \delta_{jk}\Big). \label{JJSig}
\end{eqnarray}
At a positive solution $(a, \mathbf{b})$ to $\Sigma$ (hence to $\Sigma^\star$), this gives
\begin{eqnarray*}
\frac{\partial  G_i^{(\alpha)}(a,\mathbf{b} )}{\partial y_k} 
&=&  \frac{\partial  G_i(a,\mathbf{b} )}{\partial y_k}
\ +\ 
\alpha A(a,\mathbf{b} )\cdot \Big( \frac{\partial G_j(a,\mathbf{b})}{\partial y_k}- \delta_{jk}\Big).
\label{J to JSigma}
\end{eqnarray*}
Thus, since $(a,\mathbf{b})$ is positive,  one obtains 
$ J_\alpha(a,\mathbf{b}) :=  I -  J_{\bG^{(\alpha)}}(a,\mathbf{b})$ from   
$ J(a,\mathbf{b}) :=  I -  J_{\bG}(a,\mathbf{b})$ by an elementary row operation.
It follows that $\det( J(a,\mathbf{b}))=0$ if and only if $\det( J_\alpha(a,\mathbf{b}))=0$. 
Combining this with the fact that $\Sigma$ and $\Sigma^{(\alpha)}$ 
have the same positive solutions shows that they also have the same characteristic points.
%
%For the next claim, item (c), note that the composition of minimal self-transforms 
%using $\alpha\in[0,1)$ at each step preserves the well-conditioned
%property by Lemma \ref{subs triv}.

For a well-conditioned system $\Sigma$, the standard solution is the unique sequence 
$\mathbf{T}(x)$ of non-negative
power series with $\mathbf{T}(0) = \mathbf{0}$ that solve the system. The standard solution of $\Sigma$ is
clearly a solution of $\Sigma^{(\alpha)}$. Thus if $\Sigma^{(\alpha)}$ is well-conditioned then it has the same standard
solution, and hence the same extreme point, as $\Sigma$, so (c) holds.

For the final item, let $(a,\mathbf{b})$ be a point in $\domp(\bG)$, hence a point in $\domp(\bG^{(\alpha)})$.
$A(a,\mathbf{b})$ is finite by looking at the expression above for $\bG_i(x,\mathbf{y})$. Then, 
since $G_j^{(\alpha)}(x,\mathbf{y}) = G_j(x,\mathbf{y})$ for $j\neq i$, 
\eqref{JJSig} shows that $\dfrac{\partial G_i^{(\alpha)}(a,\mathbf{b} )}{\partial y_k} $ is finite iff
$\dfrac{\partial G_i(a,\mathbf{b} )}{\partial y_k} $ is finite, 
so one has item (d).

\end{proof}

\begin{lemma} \label{subs lemma}
A well-conditioned system $\Sigma: \mathbf{y}=\bG(x,\mathbf{y})$ can be transformed by a 
self-substitution into  a well-conditioned system $\Sigma^\star: \mathbf{y}=\bG^\star(x,\mathbf{y})$ 
such that the Jacobian
matrix $J_{\bG^\star}(x,\mathbf{y})$ has all entries non-zero. Indeed, given any $n>0$, one
can find a $\Sigma^\star$ such that all $n$th partials of the $G_i^\star$ with respect to the $y_j$
 are non-zero.
\end{lemma}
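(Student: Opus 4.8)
The plan is to translate the problem into bookkeeping on exponent sets and then argue combinatorially. For each $i$ let $E_i\subseteq\mathbb{Z}_{\ge 0}^{m+1}$ be the set of exponent vectors $e=(e_0,e_1,\dots,e_m)$ of the monomials $x^{e_0}y_1^{e_1}\cdots y_m^{e_m}$ occurring with nonzero coefficient in $G_i$. Since all coefficients are nonnegative, $\partial^{n}G_i/\partial y_{j_1}\cdots\partial y_{j_n}\ne 0$ precisely when some $v\in E_i$ satisfies $v_l\ge\#\{s:j_s=l\}$ for every $l$, so it suffices to produce, for each $i$, a single $v\in E_i$ with $v_l\ge n$ for all $l\in\{1,\dots,m\}$; the case $n=1$ of this is exactly the claim that $J_{\bG^\star}$ has no zero entry. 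Next I would record the effect of one minimal self-substitution. If we substitute $\alpha G_j+(1-\alpha)y_j$ with $\alpha\in(0,1)$ for an occurrence of $y_j$ lying inside a monomial $\mu$ of $G_i$ with exponent vector $e$ (so $e_j\ge 1$), then, writing $G_i=A\,y_j+B$ where $A$ is the monomial obtained from $\mu$ by deleting that occurrence of $y_j$ (so $A$ has exponent vector $e-\mathbf{1}_j$, $\mathbf{1}_j$ being the $j$th coordinate vector) and $B=G_i-\mu$, we get $G_i^{(\alpha)}=\alpha AG_j+(1-\alpha)\mu+B$; since $\alpha\in(0,1)$ nothing cancels, so the exponent set of $G_i$ simply becomes $E_i\cup\bigl(e-\mathbf{1}_j+E_j\bigr)$ while all other $E_l$ are unchanged. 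I will apply this \emph{operation} repeatedly, using two features: exponent vectors are never destroyed, so any statement ``$E_i$ contains such-and-such vector'' persists under all later substitutions; and, by Lemma \ref{subs triv}, keeping $\alpha\in(0,1)$ keeps the system irreducible and hence well-conditioned, while a composite of finitely many such steps is a self-substitution transform.

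For $n=1$ I would work with the dependency digraph $D$ on $\{1,\dots,m\}$ having an edge $i\to j$ whenever some vector in $E_i$ has positive $j$th coordinate; irreducibility says $D$ is strongly connected. If $i\to j$ and $j\to k$ are edges, then the operation applied with a suitable $e\in E_i$ ($e_j\ge1$) and $f\in E_j$ ($f_k\ge1$) adjoins to $E_i$ the vector $e-\mathbf{1}_j+f$, whose $k$th coordinate is positive; this creates the edge $i\to k$ and destroys none, so we may replace the out-neighbourhood $N^+(i)$ by $N^+(i)\cup N^+(j)$ whenever $j\in N^+(i)$. For each ordered pair $(i,k)$ I would fix a walk $i=u_0\to u_1\to\cdots\to u_r=k$ in $D$ with $r\ge1$ (possible by strong connectivity, even if $k=i$) and apply the operation in turn to the edges $i\to u_1,\ i\to u_2,\dots$; each step is legal because the previous one just created the edge it needs, and after the last step $k\in N^+(i)$. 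Doing this for all pairs makes $D$ complete, which is the $n=1$ assertion; I may henceforth assume every $\partial G_i/\partial y_j\ne 0$.

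For general $n$ I would first invoke nonlinearity: condition~(f) produces a vector of $y$-degree $\ge 2$ in some $E_{i_0}$, and one application of the operation (to convert a monomial containing distinct variables $y_j,y_k$ into one with a squared variable) lets me assume there is $v^0\in E_{i_0}$ with $v^0_{j_0}\ge 2$ for some $j_0$. The core of the proof is then a ``pump-and-extend'' loop that grows inside $E_{i_0}$ a single monomial with all $y$-coordinates $\ge n$. Call $v$ a cover of $S\subseteq\{1,\dots,m\}$ if $v_l\ge 1$ for all $l\in S$, and maintain the invariant that $E_{i_0}$ contains a cover $e$ of some $S$ with $i_0\in S$ and $S\setminus\{i_0\}\ne\emptyset$. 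Given this: a self-loop step (legal since $e_{i_0}\ge1$) adjoins $2e-\mathbf{1}_{i_0}$ to $E_{i_0}$, again a cover of $S$ but now with all coordinates in $S\setminus\{i_0\}$ at value $\ge 2$ (the pump); then, choosing $j\in S\setminus\{i_0\}$, a fresh $k\notin S$, and $f\in E_j$ with $f_k\ge1$ (available by the $n=1$ step), one further step adjoins a cover of $S\cup\{k\}$ that still contains $i_0$ (the extend), restoring the invariant with $S$ enlarged. Establishing the invariant at the start is routine bookkeeping with $v^0$ and the $n=1$ edges; running the loop until $S=\{1,\dots,m\}$ gives $v^*\in E_{i_0}$ with all $y$-coordinates $\ge 1$. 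One short additional step (pump once, then raise the $i_0$-coordinate using a coordinate the pump made $\ge 2$) produces $w\in E_{i_0}$ with all $y$-coordinates $\ge 1$ and $w_{i_0}\ge 2$, after which iterating $w\mapsto 2w-\mathbf{1}_{i_0}$ drives every $y$-coordinate past $n$. Finally, for each $i\ne i_0$, one application of the operation with $e\in E_i$ having $e_{i_0}\ge 1$ and $f$ the monomial just built in $E_{i_0}$ adjoins to $E_i$ a vector with all $y$-coordinates $\ge n$. Then $\Sigma^\star$ is the composite of all these finitely many minimal self-substitutions.

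I expect the only genuinely delicate point to be the pump-and-extend loop. Because the operation $E_i\mapsto E_i\cup(e-\mathbf{1}_j+E_j)$ subtracts $1$ from coordinate $j$, a naive attempt to enlarge the support of a monomial destroys the very coordinate used to enlarge it, so one must always carry a coordinate of value $\ge 2$ as slack---and this is exactly what the nonlinearity hypothesis supplies at the outset and what the self-loop pump regenerates on each pass. Choosing a loop invariant that both survives one pass and can be established from the hypotheses is where the real work lies; the remaining ingredients---the exponent-set translation, the $n=1$ strong-connectivity argument, and the permanence statements of Lemma \ref{subs triv}---are straightforward verifications.
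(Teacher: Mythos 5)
Your proof is correct. It takes the same general strategy as the paper---use nonlinearity to seed a squared variable, substitute along dependency paths (irreducibility) to spread powers, pump, and finally propagate to all equations---but the decomposition is noticeably different. The paper runs a chain of reductions (Case IV $\to$ III $\to$ II $\to$ I): first produce $\partial^2 G_i/\partial y_i^2\neq 0$, iterate the self-loop to get $\partial^{mn}G_i/\partial y_i^{mn}\neq 0$, then spend $n$ copies of $y_i$ at a time along dependency paths to obtain $y_1^n\cdots y_m^n$ in a monomial of $G_i$, and finally propagate outward. You instead first establish the $n=1$ case globally (so every $\partial G_i/\partial y_j\neq 0$), then run a loop whose invariant carries a cover $S\ni i_0$ inside $E_{i_0}$: pump via the self-loop to create slack (coordinates $\ge 2$ off $i_0$), extend $S$ by one, repeat until $S=\{1,\ldots,m\}$, then iterate the self-loop to push all coordinates past $n$, and propagate in one step per remaining equation. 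The exponent-set bookkeeping and the explicit loop invariant make the ``slack'' management transparent where the paper's sketch leaves it implicit, and doing $n=1$ first means the edges $\partial G_j/\partial y_k\neq 0$ you need later are always available rather than produced on the fly. Your observation that $\alpha\in(0,1)$ never destroys monomials (so properties only accumulate) and preserves irreducibility by Lemma \ref{subs triv} is exactly the right way to make precise the paper's repeated assertion that each step ``preserves the well-conditioned property.''
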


\begin{proof}
The goal is to show that there is a sequence $\Sigma_0,\ldots, \Sigma_r$ of minimal self-substitution transforms
that go from
$\Sigma$ to the desired $\Sigma^\star$, and such that each system $\Sigma_i$ is well-conditioned.
The following four cases give the key steps in the proof.
\medskip

\noindent
{\sf CASE I:} Suppose some $G_i$ is such that all $n$th partials are non-zero. If 
$G_j $  is dependent on $y_i$ (there is at least one such $j$)
then substituting
$(1/2)G_{i} + (1/2)y_i$ for some occurrence of $y_i$ in $G_{j}$ gives a  well-conditioned system 
$\Sigma'$ such that for $G_i ' = G_i$ and $G_{j} '$, all  $n$th partials
are non-zero. Continuing in this fashion one eventually has  the desired system $\Sigma^\star$.

\medskip

\noindent
{\sf CASE II:} Suppose $\dfrac{\partial^{mn} G_i }{ \partial {y_i}^{mn} }\  \neq \ 0$ for some $i$. 
This means $ {y_i}^{mn}$ divides some monomial of $G_i$. Use the fact that for any $j\neq i$ 
there is a dependency path from $y_i$ to $y_j$ to convert, via self-substitutions that preserve
the well-conditioned property, a product of $n$ of 
the $y_i$ in this monomial into a power series which has ${y_j}^n$ dividing one of its monomials. By doing
this for each $j\neq i$ one obtains  a well-conditioned $G_i '$ with
$$
\frac{\partial^{mn} G_i '}{\partial {y_1}^n \cdots \partial {y_m}^n}\ \neq \ 0.
$$
$\Sigma'$ is now in Case I.
\medskip

\noindent
{\sf CASE III:} Suppose $\dfrac{\partial^2 G_i }{ \partial {y_i}^2 }\  \neq \ 0$ for some $i$. Substituting 
$G_i$ for a  suitable occurrence of $y_i$ in $G_i$ gives a well-conditioned 
$\Sigma'$
where 
$\dfrac{\partial^3 G_i '}{ \partial {y_i}^3 }\  \neq \ 0$. 
Continuing in this fashion leads to Case II.
\medskip

\noindent
{\sf CASE IV:} Suppose $\dfrac{\partial^2 G_i }{ \partial {y_j}\partial{y_k} }\  \neq \ 0$ for some $i,j,k$. 
If $j\neq i$ there is a dependency path from $y_j$ to $y_i$ which shows how to make self-substitutions (that preserve
the well-conditioned property) leading to 
$\dfrac{\partial^2 G_i }{ \partial {y_i}\partial{y_k} }\  \neq \ 0$.  
Likewise, if $k\neq i$ there is a dependency path from $y_k$ to $y_i$ which shows how to make self-substitutions (with each
minimal step being well-conditioned) leading to 
$\dfrac{\partial^2 G_i }{ \partial {y_i}^2}\  \neq \ 0$, which is Case III.
\medskip

Since $\Sigma$ is non-linear in $\mathbf{y}$, for some $i,j,k$ we have 
$$\frac{\partial^2 G_i }{ \partial {y_i} \partial y_k}\  \neq \ 0.$$
Thus starting with Case IV and working back to Case I we arrive at the desired $\Sigma^\star$. 

\end{proof}

\begin{lemma} \label{Lambda=1}
Let $\Sigma: \mathbf{y} = \bG(x,\mathbf{y})$ be a well-conditioned system and let 
$\Sigma^\star: \mathbf{y} = \bG^\star(x, \mathbf{y})$ be  a  
self-substitution transform of $\Sigma$ that is also well-conditioned.
If $(a,\mathbf{b})$ is a characteristic point of $\Sigma$, hence of $\Sigma^\star$, then 
$\Lambda(a,\mathbf{b}) = 1$ iff $\Lambda^\star(a, \mathbf{b})=1$.  
\end{lemma}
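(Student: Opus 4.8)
The plan is to reduce to a single minimal self-substitution step, read off the Jacobian at the common characteristic point from the computation already performed in the proof of Lemma~\ref{subs preserv}, and then finish with Perron--Frobenius theory.

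First I would handle the reduction. Write $\Sigma = \Sigma_0, \Sigma_1, \dots, \Sigma_r = \Sigma^\star$ as a chain of minimal self-substitution transforms. Since $\Sigma$ and $\Sigma^\star$ are well-conditioned, hence irreducible, Lemma~\ref{subs triv}(c) forces every $\Sigma_s$ to be irreducible, and then Lemma~\ref{subs triv}(d), applied one step at a time, forces every $\Sigma_s$ to be well-conditioned. By Lemma~\ref{subs preserv}(b), $(a,\mathbf{b})$ is a characteristic point of each $\Sigma_s$, and by Lemma~\ref{subs preserv}(d) every $J_{\bG_s}(a,\mathbf{b})$ has finite entries. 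So it suffices to prove the claim when $\Sigma^\star = \Sigma^{(\alpha)}$ is obtained by substituting $\alpha G_j(x,\mathbf{y}) + (1-\alpha) y_j$ for a single occurrence of $y_j$ in $G_i$, writing $G_i = A(x,\mathbf{y}) y_j + B(x,\mathbf{y})$ as in the proof of Lemma~\ref{subs preserv}.

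Next I would use formula~\eqref{JJSig} at the solution point. Set $J := J_\bG(a,\mathbf{b})$, $J^\star := J_{\bG^\star}(a,\mathbf{b})$, and $A := A(a,\mathbf{b}) \ge 0$; both $J$ and $J^\star$ are non-negative and irreducible (at a positive point of convergence the zero pattern of each agrees with that of the corresponding matrix of power series, which is irreducible by well-conditionedness). Evaluating~\eqref{JJSig} at $(a,\mathbf{b})$, where $G_j(a,\mathbf{b}) = b_j$ annihilates the middle term, shows that $J^\star$ and $J$ have the same rows except the $i$-th, and that the $i$-th satisfies $J^\star_{ik} = J_{ik} + \alpha A\,(J_{jk} - \delta_{jk})$; equivalently, $J^\star \mathbf{v} = J\mathbf{v} + \alpha A\bigl((J\mathbf{v})_j - v_j\bigr)\,\mathbf{e}_i$ for every $\mathbf{v} \in \bbR^m$, where $\mathbf{e}_i$ is the $i$-th standard basis vector. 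From this I would deduce that $J$ and $J^\star$ have exactly the same strictly positive fixed vectors: if $J\mathbf{v} = \mathbf{v}$ then the bracket vanishes, so $J^\star\mathbf{v} = \mathbf{v}$; conversely, if $J^\star \mathbf{v} = \mathbf{v}$, put $\delta := (J\mathbf{v})_j - v_j$, so that $J\mathbf{v} = \mathbf{v} - \alpha A \delta\,\mathbf{e}_i$, and comparing $j$-th components gives $\delta = -\alpha A\,\delta\,\delta_{ij}$, i.e. $(1 + \alpha A\,\delta_{ij})\delta = 0$; since $1 + \alpha A\,\delta_{ij} \ge 1$ this forces $\delta = 0$ and hence $J\mathbf{v} = \mathbf{v}$.

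Finally I would invoke Perron--Frobenius (Proposition~\ref{PF Prop}): for a non-negative irreducible matrix $M$ one has $\Lambda(M) = 1$ if and only if $M$ has a strictly positive eigenvector with eigenvalue $1$ --- the forward direction because the Perron eigenvector is strictly positive, the reverse because the only eigenvalue of such an $M$ admitting a positive eigenvector is its spectral radius. Applying this to $J$ and $J^\star$ and using that they have the same positive $1$-eigenvectors yields $\Lambda(a,\mathbf{b}) = 1$ if and only if $\Lambda^\star(a,\mathbf{b}) = 1$. I expect the only genuinely delicate points to be the converse half of the fixed-vector equality and the preliminary verification that every intermediate $\Sigma_s$ is well-conditioned (so that Perron--Frobenius really is available at each matrix); the remainder is routine bookkeeping.
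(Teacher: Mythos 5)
Your proof is correct, and it takes a genuinely different route from the one in the paper. The paper's proof keeps the one-parameter family $\alpha\mapsto J_{\bG^{(\alpha)}}(a,\mathbf{b})$ in play, invokes continuity of $\Lambda$ (Corollary~\ref{prop of Lambda}), and derives a contradiction from the simplicity of the Perron root via Rolle's theorem applied to the characteristic polynomials $p_\alpha$. You bypass the continuity argument entirely: you observe from \eqref{JJSig}, evaluated at the solution point where the middle term vanishes, that $J^\star$ differs from $J$ by a rank-one row update, and hence that $J$ and $J^\star$ have exactly the same fixed vectors; then you conclude with the Perron--Frobenius characterization that an irreducible non-negative matrix has spectral radius $1$ if and only if it admits a strictly positive $1$-eigenvector. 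Your converse in the fixed-vector claim is the delicate point and it does go through: comparing $j$-th coordinates gives $(1+\alpha A\,\delta_{ij})\delta=0$ and the factor is $\ge 1$. This is cleaner and more algebraic than the paper's argument, and it establishes a stronger invariant (the actual $1$-eigenspaces coincide), not merely the iff on spectral radii. Your reduction to a chain of minimal steps via Lemma~\ref{subs triv}(c)--(e) and Lemma~\ref{subs preserv}(b),(d) is also handled correctly.

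One small point worth flagging: the reverse direction of your Perron--Frobenius step --- that the only eigenvalue of an irreducible non-negative matrix $M$ admitting a strictly positive eigenvector is $\Lambda(M)$ --- is standard, but it is not literally one of the four items in the paper's Proposition~\ref{PF Prop}. It follows quickly (pair against the positive left Perron eigenvector of $M$, or use the two-sided Collatz--Wielandt bounds), and you might want to insert that one-line derivation so the appeal is self-contained relative to what the paper records.
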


\begin{proof}
Let $(a,\mathbf{b})$ be a characteristic point of $\Sigma$.
  It suffices to consider the case where $\Sigma^\star$ is obtained from $\Sigma$ by a 
  minimal self-substitution.  Let $G_i(x,\mathbf{y})$ depend on $y_j$,  and let 
  $H(x,y_0;\mathbf{y})$ be the result of replacing a single occurrence of $y_j$ in 
  $G_i(x,\mathbf{y})$ by $y_0$. Then let $\Sigma^{(\alpha)} : \mathbf{y} = \bG^{(\alpha)}(x,\mathbf{y})$,
  $\alpha \in [0,1]$,
  be the minimal
  self-substitution transform of $\Sigma$ obtained by applying
  the substitution $y_0\gets \alpha G_j(x,\mathbf{y}) + (1-\alpha)y_j$ to $H(x,y_0;\mathbf{y})$
to obtain
  $$
  G_i^{(\alpha)}(x,\mathbf{y})\:=\ H\big(x,\alpha G_j(x,\mathbf{y}) + (1-\alpha)y_j); \mathbf{y}\big).
  $$ 
  Let $\Lambda_\alpha :=\Lambda_\alpha(a,\mathbf{b})$, the largest real eigenvalue of 
  $J_{\bG^{(\alpha)}}(a,\mathbf{b})$.
   
  The only information that we need from the above construction of the $\bG_i^{(\alpha)}$
   is that the function 
  $\alpha \mapsto J_{\bG^{(\alpha)}}(a,\mathbf{b})$ is continuous on $[0,1]$, and each
  $J_{\bG^{(\alpha)}}(a,\mathbf{b})$ has 1 being an eigenvalue.
  Since $\Lambda$ is continuous on non-negative matrices by Corollary \ref{prop of Lambda}, 
  it follows that $\alpha\mapsto\Lambda_\alpha$ 
  is continuous on $[0,1]$.
  The goal is to show that one has $\Lambda_0=1$ iff $\Lambda_\alpha=1$.
  
  Since $(a,\mathbf{b})$ is a characteristic point of $\Sigma_0$ it is also a characteristic point of $\Sigma^{(\alpha)}$,
  by Lemma \ref{subs preserv}, for $\alpha\in[0,1]$. Thus 1 is an eigenvalue of $J_{\bG^{(\alpha)}}(a,\mathbf{b})$ for 
  $\alpha\in[0,1]$. Suppose $\Lambda_0= 1$. Suppose there is a $\beta\in(0,1]$ with
  $\Lambda_\beta  > 1$. From the continuity of $\Lambda_\alpha$ there is a $\gamma\in[0,\beta)$ such that: 
  $\Lambda_\gamma = 1$, and $\Lambda_\alpha>1$ for $\alpha\in(\gamma,\beta]$.
  
  Let $p_\alpha(x)$ be the characteristic polynomial of $J_{\bG^{(\alpha)}}(a,\mathbf{b})$. From
 $$
  p_\alpha(1)\ =\ p_\alpha(\Lambda_\alpha)\ =\ 0
$$
  one has, for each $\alpha\in(\gamma,\beta)$, a $c_\alpha\in(1,\Lambda_\alpha)$ such that
  $$
  \frac{d p_\alpha}{d x}(c_\alpha)\ =\ 0.
  $$
Since $\Lambda_\alpha$ is continuous on $[0,1]$,
  \[
  \lim_{\alpha\rightarrow \gamma^+} \Lambda_\alpha\ =\ \Lambda_\gamma\ =\ 1.
  \]
 This implies
   $
  \lim_{\alpha\rightarrow \gamma^+} c_\alpha\ =\ 1,
  $
  and thus
   $$
  \frac{d p_\gamma}{d x}(1)\ =\ \lim_{\alpha\rightarrow \gamma^+} \frac{d p_\alpha}{d x}(c_\alpha) \ =\ 0.
  $$
  But from the Perron-Frobenius theory (see Proposition \ref{PF Prop})
   we know that $\Lambda_\gamma = 1$ implies that 1 is a simple root
  of $p_\gamma(x)$, giving a contradiction. Thus $\Lambda_0=1$ implies $\Lambda_\alpha=1$.
  
  A similar proof gives the converse, that if $\Lambda_\alpha=1$  then $\Lambda_0=1$,
  proving the lemma. 
  
 \end{proof}

\begin{remark}\label{nonzero partials}
In view of the last two lemmas, given a well-conditioned system $\Sigma:\mathbf{y}=\bG(x,\mathbf{y})$,
 when one wants to prove something about the positive solutions, the  characteristic points, 
 or whether or not $\Lambda(a,\mathbf{b}) = 1$ at a characteristic point $(a,\mathbf{b})$,
one can, given any $n>0$, assume 
without loss of generality that all $n$th partials of each $G_i$ with respect to the $y_j$ are non-zero.
 In the rather scant literature on nonlinear systems one finds a preference for working with aperiodic systems
 (see, e.g., \cite{FlSe2009}), no doubt because of the simplicity of using uniform substitutions to convert such a system into one where the Jacobian 
 of $\bG$ has non-zero entries. %\fbox{ survey this? }
With Lemmas \ref{subs lemma} and \ref{Lambda=1}, the need for the aperiodic hypothesis is avoided. 
\end{remark}

%%%%%%%%%%% LOCATING (RHO,TAU)  %%%%%%%%%%%

\subsection{Basic Properties of $(\rho,\pmb{\tau})$ and $\mathcal{CP}$}

Now we turn to the question of how to find information about the extreme point 
$(\rho,\pmb{\tau})$ of a well-conditioned system $\Sigma$ without solving
the system for the standard solution $\bT(x)$.

\begin{lemma} \label{J entries}
Let $\mathbf{y} = \bG(x,\mathbf{y})$ be a well-conditioned system with all entries of $J_\bG$ non-zero.
\begin{thlist}
\item
One has the formal equality
\begin{equation}\label{diff eq}
\bT'(x)\ =\  \bG_x\big(x,\bT(x)\big) + J_\bG\big(x,\bT(x)\big)\cdot\bT'(x),
\end{equation}
which also holds for $x\in [0,\infty]$.
\item
All $T_i'(\rho)$ are finite or all $T_i'(\rho)=\infty$.
\item
For all $i,j$ the following hold:
\begin{eqnarray*}
%0& < & \frac{\partial G_i}{\partial y_i}(\rho,\pmb{\tau})\ <\ 1\quad\text{if $m>1$}\\
0& <& \frac{\partial G_i}{\partial y_j}(\rho,\pmb{\tau})\cdot\frac{\partial G_j}{\partial y_i}(\rho,\pmb{\tau})\ \le \ 1\\
0& < & \frac{\partial G_i}{\partial y_j}(\rho,\pmb{\tau})\ <\ \infty\\
0& < & \frac{\partial G_i}{\partial y_i}(\rho,\pmb{\tau})\ \le\ 1.
\end{eqnarray*}
\end{thlist}
\end{lemma}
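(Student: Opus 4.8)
The plan is to prove the three items in order, since each builds on the previous. For item (a), I would differentiate the identity $\bT(x) = \bG(x,\bT(x))$ term-by-term as formal power series; the chain rule gives exactly \eqref{diff eq}, with $J_\bG$ the Jacobian in the $\bfy$-variables and $\bG_x$ the vector of $x$-partials. Since all the relevant series have non-negative coefficients, the formal identity upgrades to an identity of functions on $[0,\infty]$ by the permanence/continuity results cited for Proposition \ref{WellKnown} (Proposition \ref{permanence}), interpreting both sides in $[0,\infty]$ as monotone limits of their partial sums. This is routine.

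For item (b), I would rewrite \eqref{diff eq} at a point $x=a<\rho$ as $\bigl(I - J_\bG(a,\bT(a))\bigr)\bT'(a) = \bG_x(a,\bT(a))$. For $a<\rho$ everything is finite, and since the system is irreducible with non-negative Jacobian and $\bT'(a)>\mathbf 0$ componentwise (the $T_i$ are non-constant with non-negative coefficients, and irreducibility forces each to be genuinely non-constant), I would argue that $I - J_\bG(a,\bT(a))$ is invertible with non-negative inverse for $a<\rho$ — equivalently $\Lambda(a,\bT(a))<1$ there. Now let $a\to\rho^-$. Because all entries of $J_\bG(x,\bT(x))$ and of $\bG_x(x,\bT(x))$ are monotone non-decreasing in $x$ on $[0,\rho)$, the $\bT'(a)$ are monotone non-decreasing, so each $T_i'(\rho):=\lim_{a\to\rho^-}T_i'(a)$ exists in $(0,\infty]$. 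The point is that they are "linked": from $T_i'(a) = \frac{\partial G_i}{\partial x}(a,\bT(a)) + \sum_k \frac{\partial G_i}{\partial y_k}(a,\bT(a))\,T_k'(a)$ and the fact (to be established, or pulled from the positivity of $J_\bG$) that each $\frac{\partial G_i}{\partial y_k}(\rho,\pmb\tau)>0$, if some $T_k'(\rho)=\infty$ then every $T_i'(\rho)=\infty$; and conversely if all $T_k'(\rho)<\infty$ the same relation keeps each $T_i'(\rho)$ finite. So the dichotomy in (b) follows once we know the mixed partials $\frac{\partial G_i}{\partial y_k}(\rho,\pmb\tau)$ are finite and positive.

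For item (c), the finiteness and positivity $0<\frac{\partial G_i}{\partial y_j}(\rho,\pmb\tau)<\infty$ comes from: positivity because $J_\bG$ has no zero entries (hypothesis) and $(\rho,\pmb\tau)$ is a positive point with all $\tau_i>0$, so every monomial contributing to $\partial G_i/\partial y_j$ evaluates positively; finiteness because $G_i(x,\bfy)$ converges at $(\rho,\pmb\tau)$ (it equals $\tau_i<\infty$) together with a Pringsheim/Cauchy-estimate argument showing that convergence of a non-negative power series at an interior-or-boundary point of a coordinate ray forces the first partials to converge there too — this is the standard "derivative of a power series with non-negative coefficients" fact, and I would cite the background appendix. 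Then $\frac{\partial G_i}{\partial y_i}(\rho,\pmb\tau)\le 1$ and $\frac{\partial G_i}{\partial y_j}(\rho,\pmb\tau)\cdot\frac{\partial G_j}{\partial y_i}(\rho,\pmb\tau)\le 1$ follow from a spectral-radius bound: since $\Lambda(a,\bT(a))<1$ for all $a<\rho$ and $\Lambda$ is continuous on non-negative matrices (Corollary \ref{prop of Lambda}), letting $a\to\rho^-$ gives $\Lambda(\rho,\pmb\tau)=\Lambda(J_\bG(\rho,\pmb\tau))\le 1$. For a non-negative matrix $M$ with spectral radius $\le 1$, every diagonal entry is $\le 1$ (a diagonal entry is dominated by the spectral radius, e.g. via $M_{ii}\le \Lambda(M)$ from monotonicity of the spectral radius under zeroing out all other entries), and for any $i\ne j$ the $2\times 2$ principal submatrix $\begin{pmatrix}M_{ii}&M_{ij}\\ M_{ji}&M_{jj}\end{pmatrix}$ has spectral radius $\le \Lambda(M)\le 1$, forcing its larger eigenvalue $\tfrac12\bigl(M_{ii}+M_{jj}+\sqrt{(M_{ii}-M_{jj})^2+4M_{ij}M_{ji}}\bigr)\le 1$, whence $M_{ij}M_{ji}\le (1-M_{ii})(1-M_{jj})\le 1$. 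Applying this with $M=J_\bG(\rho,\pmb\tau)$ gives all three inequalities of (c).

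The main obstacle I anticipate is the limiting step feeding (b) and the finiteness claim in (c): one must be careful that $\bT'$ does not blow up \emph{before} $\rho$ (ruled out by $\Lambda(a,\bT(a))<1$ for $a<\rho$, which itself needs the irreducibility and the fact that $\rho$ is exactly where a singularity appears, Proposition \ref{WellKnown}(v)) and that $\frac{\partial G_i}{\partial y_j}(\rho,\pmb\tau)$ is genuinely finite — the latter is the delicate point, since $(\rho,\pmb\tau)$ may lie on the boundary of the domain of $\bG$, and one has to exploit that $G_i$ \emph{converges} at $(\rho,\pmb\tau)$ rather than merely being defined nearby. Everything else is bookkeeping with non-negative series and Perron--Frobenius facts already quoted.
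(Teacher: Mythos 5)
Your parts (a) and (b) are essentially sound and match the paper's approach (differentiate $\bT=\bG(x,\bT)$; observe $\bT'$ solves an irreducible linear system, so finiteness is an all-or-nothing affair). Your part (c), however, contains a genuine gap at exactly the point you flag as delicate.

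The problem is the finiteness claim for $\dfrac{\partial G_i}{\partial y_j}(\rho,\pmb\tau)$. You assert that ``convergence of a non-negative power series at an interior-or-boundary point of a coordinate ray forces the first partials to converge there too,'' and call it a standard fact. It is false. Take $A(y)=\sum_{n\ge2}y^n/n^2$: it converges at $y=1$, but $A'(y)=\sum_{n\ge2}y^{n-1}/n$ diverges there. So convergence of $G_i$ at $(\rho,\pmb\tau)$ does \emph{not} by itself give finiteness of $J_\bG(\rho,\pmb\tau)$, and nothing in the background appendix supplies such a fact. This gap is load-bearing: your spectral-radius argument for the inequalities in (c) needs $\Lambda(\rho,\pmb\tau)=\Lambda\big(J_\bG(\rho,\pmb\tau)\big)\le1$, which is meaningless unless the matrix $J_\bG(\rho,\pmb\tau)$ already has finite entries, and your argument for (b) also quietly assumes the $\partial G_i/\partial y_k(\rho,\pmb\tau)$ are finite so that ``one $T_k'(\rho)=\infty$ forces all $=\infty$'' cannot misfire.

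The paper avoids this entirely by getting the inequality \emph{first} and finiteness \emph{as a corollary}, without any reference to $\Lambda$. For $x\in(0,\rho)$, equation \eqref{diff eq} expresses $T_i'(x)$ as a sum of strictly positive terms, one of which is $\frac{\partial G_i}{\partial y_j}(x,\bT(x))\,T_j'(x)$, so one has the strict comparison $T_i'(x)>\frac{\partial G_i}{\partial y_j}(x,\bT(x))\,T_j'(x)$ for every pair $(i,j)$, including $i=j$. Pairing the $(i,j)$ and $(j,i)$ versions and cancelling $T_i'(x)T_j'(x)>0$ yields
\[
\frac{\partial G_i}{\partial y_j}\big(x,\bT(x)\big)\cdot\frac{\partial G_j}{\partial y_i}\big(x,\bT(x)\big)\;<\;1\qquad(0<x<\rho),
\]
and letting $x\to\rho^-$, using only the left-continuity and monotonicity of non-negative power series on $[0,\infty]$ (which \emph{is} in the appendix), gives the product $\le1$ at $(\rho,\pmb\tau)$. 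Since both factors are positive (all entries of $J_\bG$ are non-zero and $(\rho,\pmb\tau)>0$), a product $\le1$ of two positive elements of $(0,\infty]$ forces both to be finite. That is the whole of (c), and no Perron--Frobenius machinery is needed there. Your $2\times2$ principal-submatrix computation is a perfectly good way to extract the same bounds once finiteness and $\Lambda\le1$ are in hand, but it is building on a foundation you haven't actually laid.

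If you want to salvage your route, you would first have to prove finiteness of $J_\bG(\rho,\pmb\tau)$ by some other means (e.g.\ the paper's comparison argument), at which point the spectral-radius bound becomes redundant for (c). So the cleanest fix is simply to replace your finiteness argument with the direct comparison above.
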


\begin{proof}
Differentiating \eqref{the sol} gives \eqref{diff eq}, so $\bT'(x)$
is a solution to the irreducible system
$\mathbf{u}\ =\  \bG_x\big(x,\bT(x)\big) + J_\bG\big(x,\bT(x)\big)\cdot \mathbf{u}$, implying (b).
For $x\in(0,\rho)$, for each $i,j$, \eqref{diff eq} implies
$$ 
T'_i(x)\ >\ \frac{\partial G_i}{\partial y_j}\big(x,\bT(x)\big)\cdot T_j'(x),
$$
and thus
\begin{eqnarray*}
%1& >& \frac{\partial G_i}{\partial y_i}\big(x,\bT(x)\big)\\
1& >& \frac{\partial G_i}{\partial y_j}\big(x,\bT(x)\big)\cdot\frac{\partial G_j}{\partial y_i}\big(x,\bT(x)\big)\ >\ 0,
\end{eqnarray*}
giving the inequalities in (c) since the value of $ \dfrac{\partial G_i}{\partial y_j}\big(\rho,\pmb{\tau}\big)$ is
the limit of $\dfrac{\partial G_i}{\partial y_j}\big(x,\bT(x)\big)$ as $x$ approaches 
$\rho$ from below.
\end{proof}

\begin{lemma} \label{easy CP}
Let $\mathbf{y} = \bG(x,\mathbf{y})$ be a well-conditioned system.
\begin{thlist}

\item 
If $(a,\mathbf{b})\in\mathcal{CP}$ then $\Lambda(a,\mathbf{b}) \ge 1$.

\item
$0 < \Lambda\big(a,\bT(a)\big) < 1$, for $0<a<\rho$.

\end{thlist}
\end{lemma}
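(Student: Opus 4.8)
Both parts are short applications of Perron--Frobenius theory (Proposition~\ref{PF Prop}).

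\emph{Part (a).} Let $(a,\mathbf{b})\in\mathcal{CP}$. By definition $\mathbf{b}=\bG(a,\mathbf{b})$, which puts $(a,\mathbf{b})\in\domp(\bG)$ and forces the entries of $M:=J_\bG(a,\mathbf{b})$ to be finite (otherwise $\det\bigl(I-J_\bG(a,\mathbf{b})\bigr)$ is meaningless), and $\det(I-M)=0$, i.e.\ $1$ is an eigenvalue of the finite non-negative matrix $M$. The spectral radius of a non-negative matrix is itself an eigenvalue and dominates the modulus of every eigenvalue, and it equals $\Lambda(a,\mathbf{b})$; hence $\Lambda(a,\mathbf{b})\ge 1$. (Irreducibility is not needed here, so no appeal to Remark~\ref{nonzero partials} is required.)

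\emph{Part (b), setup.} Fix $0<a<\rho$ and choose $a'$ with $a<a'<\rho$. Each $T_j$ is a non-constant power series with non-negative coefficients, so $T_j'>0$ on $(0,\rho)$ and in particular $T_j(a)<T_j(a')$; also $\bT(a')$ is finite. Since every $G_i$ has non-negative coefficients and converges at the positive point $(a',\bT(a'))$, it converges throughout the open polydisc $\{\,|x|<a',\ |y_k|<T_k(a')\ (1\le k\le m)\,\}$, which contains $(a,\bT(a))$. Intersecting with the positive orthant shows $(a,\bT(a))$ is interior to $\domp(\bG)$; consequently $M:=J_\bG(a,\bT(a))$ and $\bG_x(a,\bT(a))$ have finite entries, $M$ is non-negative, and $M$ is irreducible (the zero/non-zero pattern of $J_\bG$ is unchanged at positive points). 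By Perron--Frobenius, $\Lambda(a,\bT(a))>0$ is a simple eigenvalue of $M$ with a strictly positive left eigenvector $\mathbf{w}^{\top}$.

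\emph{Part (b), conclusion.} Differentiating the identity $\bT(x)=\bG(x,\bT(x))$, which holds for real $x$ by Proposition~\ref{WellKnown}(iii), gives $\bT'(x)=\bG_x(x,\bT(x))+J_\bG(x,\bT(x))\,\bT'(x)$ (cf.\ Lemma~\ref{J entries}(a), whose derivation of this identity does not use the non-vanishing hypothesis); at $x=a$ this rearranges to $(I-M)\,\bT'(a)=\bG_x(a,\bT(a))$. Here $\bT'(a)$ is finite and strictly positive componentwise, and $\bG_x(a,\bT(a))$ is strictly positive componentwise: since $G_i(0,\mathbf{y})=0$ while $G_i(x,\mathbf{0})\neq 0$, each $\partial G_i/\partial x$ is a non-zero power series with non-negative coefficients, hence positive at the positive point $(a,\bT(a))$. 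Left-multiplying by $\mathbf{w}^{\top}$ yields $\bigl(1-\Lambda(a,\bT(a))\bigr)\,\mathbf{w}^{\top}\bT'(a)=\mathbf{w}^{\top}\bG_x(a,\bT(a))>0$, and $\mathbf{w}^{\top}\bT'(a)>0$, so $\Lambda(a,\bT(a))<1$; with the positivity noted above this is (b). The only step needing care is showing $(a,\bT(a))$ is interior to the positive domain (equivalently, that $J_\bG(a,\bT(a))$ and $\bG_x(a,\bT(a))$ are finite) for \emph{every} $0<a<\rho$, since otherwise neither the Perron--Frobenius machinery nor the differentiated identity is available at $(a,\bT(a))$; dominating $(a,\bT(a))$ by the convergence point $(a',\bT(a'))$ with $a<a'<\rho$ handles this, and the remaining manipulations are routine non-negativity bookkeeping.
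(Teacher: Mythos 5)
Your proof is correct and follows the same approach as the paper: part (a) is the observation that $1$ is an eigenvalue of $J_\bG(a,\mathbf{b})$ so the spectral radius is $\ge 1$, and part (b) applies a positive left Perron eigenvector of $J_\bG(a,\bT(a))$ to the differentiated identity $\bT'(a) = \bG_x(a,\bT(a)) + J_\bG(a,\bT(a))\bT'(a)$ to force $\Lambda(a,\bT(a)) < 1$. The paper's version of (b) invokes equation \eqref{diff eq} and the eigenvector argument directly, leaving implicit the fact that $(a,\bT(a))$ lies in the interior of $\domp(\bG)$ and that $J_\bG(a,\bT(a))$, $\bG_x(a,\bT(a))$ and $\bT'(a)$ are all finite and appropriately positive; your domination argument via a nearby point $(a',\bT(a'))$ with $a<a'<\rho$ supplies that missing bookkeeping cleanly, and your remark that the non-vanishing-Jacobian hypothesis of Lemma~\ref{J entries} is not actually needed for the differentiated identity is a fair reading. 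No gaps.
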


\begin{proof}
For (a) note that $(a,\mathbf{b})\in\mathcal{CP}$ implies that 1 is an eigenvalue of 
$J_\bG(a,\mathbf{b})$, so $\Lambda(a,\mathbf{b}) \ge 1$.

(b) Given $0<a<\rho$, by the Perron-Frobenius theory of nonnegative matrices we know that
there is a positive left eigenvector (a row vector) $\bfv$ belonging to $\Lambda\big(a,\bT(a)\big)$. By \eqref{diff eq}
$$
\bfv\cdot\bT'(a)\ =\ \bfv\cdot \bG_x\big(a,\bT(a)\big) + \bfv\cdot J_\bG\big(a,\bT(a)\big)\cdot\bT'(a),
$$
so
$$
\bfv\cdot\bT'(a)\ =\ \bfv\cdot \bG_x\big(a,\bT(a)\big) + \Lambda\big(a,\bT(a)\big) \bfv\cdot \bT'(a).
$$
Since $\bfv\cdot \bT'(a) > 0 $ and $\bfv\cdot \bG_x\big(a,\bT(a)\big) > 0$ it follows that 
$\Lambda\big(a,\bT(a)\big) < 1$. 

\end{proof}

\begin{proposition}\label{antichain}
Let $\mathbf{y} = \bG(x,\mathbf{y})$ be a well-conditioned system.
Suppose $(a,\mathbf{b})$ and $(c,\mathbf{d})$ are characteristic points and $(a,\mathbf{b}) \le (c,\mathbf{d})$.
Then
$(a,\mathbf{b}) = (c,\mathbf{d})$. Thus the set of  characteristic points of the system forms an antichain under the partial ordering $\le$.
\end{proposition}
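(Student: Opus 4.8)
The plan is a proof by contradiction. Suppose $(a,\mathbf{b})\le(c,\mathbf{d})$ are characteristic points with $(a,\mathbf{b})\neq(c,\mathbf{d})$; I will exhibit a strictly positive vector $\mathbf{w}$ with $J_\bG(a,\mathbf{b})\,\mathbf{w}<\mathbf{w}$ (coordinatewise), which forces $\Lambda(a,\mathbf{b})<1$ and contradicts Lemma~\ref{easy CP}(a). Since the set $\mathcal{CP}$ of characteristic points is unchanged under self-substitution (Lemma~\ref{subs preserv}), by Remark~\ref{nonzero partials} I may assume from the start that all second partials $\partial^2G_i/\partial y_j\partial y_k$ are nonzero power series. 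I will use two consequences of this: every entry of $J_\bG$ is a nonzero series, and every entry $\partial G_i/\partial y_j$ depends nontrivially on every variable $y_k$.

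The computation at the heart of the argument is to subtract the fixed-point identities $\mathbf{b}=\bG(a,\mathbf{b})$ and $\mathbf{d}=\bG(c,\mathbf{d})$. Setting $\mathbf{w}:=\mathbf{d}-\mathbf{b}\ge\mathbf{0}$ and routing the difference through the intermediate point $(c,\mathbf{b})$ gives
\[
\mathbf{w}\;=\;\bigl[\bG(c,\mathbf{d})-\bG(c,\mathbf{b})\bigr]\;+\;\bigl[\bG(c,\mathbf{b})-\bG(a,\mathbf{b})\bigr]\;=\;\bar J\,\mathbf{w}\;+\;\mathbf{u},
\]
where $\bar J:=\int_0^1 J_\bG\bigl(c,\mathbf{b}+t\mathbf{w}\bigr)\,dt$ and $\mathbf{u}:=\bG(c,\mathbf{b})-\bG(a,\mathbf{b})$. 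Every series and partial derivative appearing here is finite along the segment from $(a,\mathbf{b})$ to $(c,\mathbf{d})$, because that segment lies coordinatewise below the characteristic point $(c,\mathbf{d})$, where $J_\bG$ is finite, and $\bG$ has non-negative coefficients. Monotonicity of the non-negative-coefficient entries of $J_\bG$ yields $\bar J\ge J_\bG(a,\mathbf{b})$ entrywise, and $\mathbf{u}\ge\mathbf{0}$.

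I then split into the cases $c>a$ and $c=a$. If $c>a$, then $\mathbf{u}>\mathbf{0}$ strictly, since each $G_i$ is a nonzero series every monomial of which is divisible by $x$ (because $\bG(0,\mathbf{y})=\mathbf{0}$) and $(a,\mathbf{b}),(c,\mathbf{b})$ are positive; hence $\mathbf{w}\ge J_\bG(a,\mathbf{b})\,\mathbf{w}+\mathbf{u}$ with $\mathbf{u}>\mathbf{0}$. If $c=a$, then $\mathbf{u}=\mathbf{0}$ but $\mathbf{w}\neq\mathbf{0}$, and here the normalization is used: for $t\in(0,1]$ the point $\mathbf{b}+t\mathbf{w}$ strictly exceeds $\mathbf{b}$ in each coordinate of $\supp(\mathbf{w})$, and every entry of $J_\bG$ depends on those coordinates, so $J_\bG(a,\mathbf{b}+t\mathbf{w})>J_\bG(a,\mathbf{b})$ entrywise; integrating, $M:=\bar J-J_\bG(a,\mathbf{b})$ is a strictly positive matrix, and $\mathbf{w}=J_\bG(a,\mathbf{b})\,\mathbf{w}+M\mathbf{w}$ with $M\mathbf{w}>\mathbf{0}$. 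In either case $\mathbf{w}\ge J_\bG(a,\mathbf{b})\,\mathbf{w}+\mathbf{u}$ for some $\mathbf{u}>\mathbf{0}$; since $J_\bG(a,\mathbf{b})\,\mathbf{w}\ge\mathbf{0}$ this gives $\mathbf{w}\ge\mathbf{u}>\mathbf{0}$, and then $J_\bG(a,\mathbf{b})\,\mathbf{w}\le\mathbf{w}-\mathbf{u}<\mathbf{w}$ coordinatewise. The elementary Perron--Frobenius estimate $\Lambda(A)\le\max_k(A\mathbf{w})_k/w_k$, valid for any non-negative $A$ and $\mathbf{w}>\mathbf{0}$ (Proposition~\ref{PF Prop}), then gives $\Lambda(a,\mathbf{b})<1$, contradicting Lemma~\ref{easy CP}(a). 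Hence $(a,\mathbf{b})=(c,\mathbf{d})$, so $\mathcal{CP}$ is an antichain.

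The only step I expect to require care is the sub-case $c=a$: the $x$-gap term $\mathbf{u}$ vanishes there, so the needed strict monotonicity of $J_\bG$ must come solely from the increase in the $\mathbf{y}$-coordinates, which could fail for an entry $\partial G_i/\partial y_j$ that does not involve the coordinates where $\mathbf{b}$ and $\mathbf{d}$ differ. Ruling this out is exactly the point of the self-substitution reduction (Lemmas~\ref{subs lemma} and~\ref{Lambda=1}, summarized in Remark~\ref{nonzero partials}), which makes every $\partial G_i/\partial y_j$ depend on every $y_k$; this is why that normalization is invoked at the outset.
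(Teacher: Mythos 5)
Your proof is correct and reaches the same contradiction as the paper (deriving $\Lambda(a,\mathbf{b})<1$ against Lemma~\ref{easy CP}(a)), but the mechanics differ in a way worth noting. The paper's argument case-splits on whether $\mathbf{b}=\mathbf{d}$ (which forces $a=c$ by $x$-monotonicity) and, when $\mathbf{b}\neq\mathbf{d}$, writes down a \emph{truncated Taylor inequality} $\bG(c,\mathbf{d})\ge\bG(a,\mathbf{b})+J_\bG(a,\mathbf{b})(\mathbf{d}-\mathbf{b})+\tfrac12[\,\partial^2 G_i/\partial y_i^2\,(a,\mathbf{b})(d_i-b_i)^2\,]_i$, where the diagonal second partials (non-zero after the self-substitution normalization) supply the strict excess; it then left-multiplies by a positive left Perron eigenvector $\bfv$ to collapse to a scalar inequality $(1-\lambda)\bfv(\mathbf{d}-\mathbf{b})>0$. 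You instead split on $c>a$ versus $c=a$, use the mean-value integral $\bar J=\int_0^1 J_\bG(c,\mathbf{b}+t\mathbf{w})\,dt$ so that the gap $\mathbf{w}-J_\bG(a,\mathbf{b})\mathbf{w}$ is sourced either from the $x$-increase (giving $\mathbf{u}>\mathbf{0}$) or, when $c=a$, from the strict entrywise increase of $J_\bG$ in $\mathbf{y}$ (which needs the same non-vanishing second partials), and then you invoke the Collatz--Wielandt $\min$--$\max$ bound $\Lambda(A)\le\max_k(A\mathbf{w})_k/w_k$. Both decompositions correctly localize the strictness, and both appeal to the same normalization step. Two small remarks. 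First, one advantage of the paper's Taylor truncation over your MVT integral is that it entirely sidesteps regularity questions at the boundary endpoint $(c,\mathbf{d})$ of the segment: the inequality is just term-by-term nonnegativity of a power series, so no passage to a limit in $t$ or justification of $C^1$-ness up to the endpoint is needed. Second, your final Perron--Frobenius step cites an upper-bound ($\min$--$\max$) form of the Collatz--Wielandt quotient, which is standard but is not literally the formula in Proposition~\ref{PF Prop} (which states the $\max$--$\min$ form, a lower bound). This is easily repaired within the paper's stated results by the paper's own device: take a positive left eigenvector $\bfv$ of $J_\bG(a,\mathbf{b})$ (Proposition~\ref{PF Prop}(d)), hit your strict inequality $J_\bG(a,\mathbf{b})\mathbf{w}<\mathbf{w}$ with $\bfv$, and cancel $\bfv\mathbf{w}>0$.
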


\begin{proof}
For the proof assume, in view of Remark \ref{nonzero partials}, that all second partials of the $G_i$ with respect to the $y_j$ do not vanish.
If $\mathbf{b}=\mathbf{d}$ then $\bG(a,\mathbf{b}) = \mathbf{b} = \mathbf{d} = \bG(c,\mathbf{d})$, 
which forces $a=c$ by the monotonicity of each $\bG_i$. 

Now assume $\mathbf{b}\neq \mathbf{d}$. Since $\mathbf{b}\le \mathbf{d}$, all
entries of $\mathbf{d} - \mathbf{b}$ are non-negative.
Using part of a Taylor series expansion,
$$
\bG(c,\mathbf{d})\ \ge\ \bG(a,\mathbf{b}) +  J_\bG(a,\mathbf{b})
(\mathbf{d} - \mathbf{b})
+ \frac{1}{2}
\left[ 
\begin{array}{c}
\frac{\partial^2 G_1(a,\mathbf{b})}{\partial {y_1}^2}(d_1 - b_1)^2\\
\vdots\\
\frac{\partial^2 G_m(a,\mathbf{b})}{\partial {y_m}^2}(d_m - b_m)^2
\end{array}
\right].
$$
Since $\bG(a,\mathbf{b}) = \mathbf{b}$ and  $\bG(c,\mathbf{d})= \mathbf{d} $,
$$
\mathbf{d} - \mathbf{b}\ \ge\   J_\bG(a,\mathbf{b}) (\mathbf{d} - \mathbf{b})
+ \frac{1}{2}
\left[ 
\begin{array}{c}
\frac{\partial^2 G_1(a,\mathbf{b})}{\partial {y_1}^2}(d_1 - b_1)^2\\
\vdots\\
\frac{\partial^2 G_m(a,\mathbf{b})}{\partial {y_m}^2}(d_m - b_m)^2
\end{array}
\right].
$$
Let $\lambda$ be the largest real eigenvalue of the positive matrix $J_\bG(a,\mathbf{b})$, 
and let $\bfv$ be a positive left eigenvector belonging to $\lambda$. Then
\begin{eqnarray*}
\bfv(\mathbf{d} - \mathbf{b})
&\ge&   \bfv J_\bG(a,\mathbf{b}) (\mathbf{d} - \mathbf{b})
+ \frac{1}{2}\bfv \left[ 
\begin{array}{c}
\frac{\partial^2 G_1(a,\mathbf{b})}{\partial {y_1}^2}(d_1 - b_1)^2\\
\vdots\\
\frac{\partial^2 G_m(a,\mathbf{b})}{\partial {y_m}^2}(d_m - b_m)^2
\end{array}
\right]\\
&=&
\lambda \bfv (\mathbf{d} - \mathbf{b}) + \frac{1}{2}\bfv 
\left[ 
\begin{array}{c}
\frac{\partial^2 G_1(a,\mathbf{b})}{\partial {y_1}^2}(d_1 - b_1)^2\\
\vdots\\
\frac{\partial^2 G_m(a,\mathbf{b})}{\partial {y_m}^2}(d_m - b_m)^2
\end{array}
\right]
\end{eqnarray*}
so
\begin{eqnarray*}
(1-\lambda)\bfv(\mathbf{d} - \mathbf{b})
&\ge&
 \frac{1}{2}\bfv \left[ 
\begin{array}{c}
\frac{\partial^2 G_1(a,\mathbf{b})}{\partial {y_1}^2}(d_1 - b_1)^2\\
\vdots\\
\frac{\partial^2 G_m(a,\mathbf{b})}{\partial {y_m}^2}(d_m - b_m)^2
\end{array}
\right]\ >\ 0,
\end{eqnarray*}
and this forces $\lambda < 1$, contradicting Lemma \ref{easy CP} (a).
\end{proof}

\begin{lemma}\label{ev of J(x,Tx)} 
Let $\mathbf{y} = \bG(x,\mathbf{y})$ be a well-conditioned system.
\begin{thlist}

\item
$(\rho,\pmb{\tau})$ is in the domain of $J_\bG(x,\mathbf{y})$, that is, all entries of the 
matrix $J_\bG(\rho,\pmb{\tau})$ are finite.

\item
If $(\rho,\pmb{\tau})$ is in the interior of the domain of $\bG(x,\mathbf{y})$ then it is a 
characteristic point.

\item
$0<\Lambda(\rho,\pmb{\tau}) \le 1$.

\item
$\Lambda(\rho,\pmb{\tau}) = 1$ iff $1$ is an eigenvalue of 
$J_\bG(\rho,\pmb{\tau})$ iff $(\rho,\pmb{\tau})\in\mathcal{CP}$.

\end{thlist}
\end{lemma}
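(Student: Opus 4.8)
The plan is to prove the four parts more or less in the order stated, since each leans on the previous ones, and to exploit the reduction in Remark \ref{nonzero partials} so that $J_\bG$ may be assumed to have all entries positive (note that (a)--(d) are exactly the sort of statements that remark licenses us to transfer along a self-substitution transform; part (c) uses Lemma \ref{Lambda=1} for the ``$=1$'' borderline). First I would prove (a). The point is that $T_i(\rho) = \tau_i < \infty$ for all $i$ by Proposition \ref{WellKnown}(iv), and I want to conclude that $\partial G_i/\partial y_j$ is finite at $(\rho,\pmb\tau)$. The clean way is via the differential identity \eqref{diff eq} of Lemma \ref{J entries}(a), valid on $[0,\infty]$: by Lemma \ref{J entries}(b) either all $T_i'(\rho)$ are finite or all are $\infty$. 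If they were all finite, then \eqref{diff eq} at $x=\rho$ together with positivity forces each $\partial G_i/\partial y_j(\rho,\pmb\tau)$ finite; but I must rule out the possibility that $T_i'(\rho)=\infty$ for all $i$. Here I would argue by monotone convergence: each $\partial G_i/\partial y_j(x,\bT(x))$ increases to its value at $(\rho,\pmb\tau)$ as $x\uparrow\rho$, and by Lemma \ref{J entries}(c) the products $\frac{\partial G_i}{\partial y_j}\frac{\partial G_j}{\partial y_i}$ stay $\le 1$; combined with irreducibility this bounds the whole matrix $J_\bG(x,\bT(x))$ entrywise uniformly below $\rho$, hence $J_\bG(\rho,\pmb\tau)$ has finite entries by monotone convergence, which is exactly (a). (One also gets, as a by-product of Lemma \ref{J entries}(c) in the limit, that $\Lambda(\rho,\pmb\tau)$ — being dominated by a finite nonnegative matrix — is finite and positive.)

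Next, part (b). Suppose $(\rho,\pmb\tau)$ is interior to the domain of $\bG$. Then $\bT(\rho)=\pmb\tau$ satisfies the system, so it remains to see $\det(I-J_\bG(\rho,\pmb\tau))=0$; but this is precisely Proposition \ref{WellKnown}(vi). Hence $(\rho,\pmb\tau)\in\mathcal{CP}$.

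For (c), the lower bound $\Lambda(\rho,\pmb\tau)>0$ follows from (a) and Perron--Frobenius (a positive matrix has positive spectral radius). For the upper bound $\Lambda(\rho,\pmb\tau)\le 1$: by Lemma \ref{easy CP}(b) we have $\Lambda(a,\bT(a))<1$ for every $a\in(0,\rho)$, and $\Lambda$ is continuous on nonnegative matrices (Corollary \ref{prop of Lambda}), while $J_\bG(a,\bT(a))\to J_\bG(\rho,\pmb\tau)$ as $a\uparrow\rho$ by the entrywise monotone convergence established in (a); passing to the limit gives $\Lambda(\rho,\pmb\tau)=\lim_{a\uparrow\rho}\Lambda(a,\bT(a))\le 1$. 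Finally (d): the equivalences ``$\Lambda(\rho,\pmb\tau)=1$ iff $1$ is an eigenvalue of $J_\bG(\rho,\pmb\tau)$'' and ``iff $(\rho,\pmb\tau)\in\mathcal{CP}$'' are almost formal given (c). If $\Lambda(\rho,\pmb\tau)=1$ then $1$ is the Perron eigenvalue, hence an eigenvalue; conversely if $1$ is an eigenvalue then $1\le\Lambda(\rho,\pmb\tau)\le 1$ by (c), so $\Lambda(\rho,\pmb\tau)=1$. And since $\bT(\rho)=\pmb\tau$ already satisfies the first $m$ equations, membership in $\mathcal{CP}$ is equivalent to $\det(I-J_\bG(\rho,\pmb\tau))=0$, i.e.\ to $1$ being an eigenvalue of $J_\bG(\rho,\pmb\tau)$.

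I expect the main obstacle to be part (a), specifically ruling out the ``all $T_i'(\rho)=\infty$'' alternative and thereby getting finiteness of $J_\bG(\rho,\pmb\tau)$ when $(\rho,\pmb\tau)$ may lie on the boundary of the domain of $\bG$ — there one cannot simply invoke the Implicit Function Theorem. The monotone-convergence argument driven by the bound $\frac{\partial G_i}{\partial y_j}\frac{\partial G_j}{\partial y_i}\le 1$ from Lemma \ref{J entries}(c), propagated around the irreducibility graph to control every entry of $J_\bG$, is the crux; once (a) is in hand, parts (b)--(d) are short, resting on Proposition \ref{WellKnown}(vi), Lemma \ref{easy CP}(b), continuity of $\Lambda$, and Perron--Frobenius simplicity of the dominant eigenvalue.
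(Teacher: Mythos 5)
Your overall route is essentially the paper's: reduce via Remark \ref{nonzero partials}, get finiteness of $J_\bG(\rho,\pmb\tau)$ from Lemma \ref{J entries}, derive $\Lambda(\rho,\pmb\tau)\le 1$ by letting $x\uparrow\rho$ in Lemma \ref{easy CP}(b) using continuity of $\Lambda$ (Corollary \ref{prop of Lambda}), and then (d) is the bookkeeping you describe. Two remarks. First, you make part (a) look harder than it is: the dichotomy of Lemma \ref{J entries}(b) and the ``rule out $T_i'(\rho)=\infty$'' worry are not a new obstacle here, because Lemma \ref{J entries}(c) already asserts $0<\partial G_i/\partial y_j(\rho,\pmb\tau)<\infty$ for all $i,j$ outright; the monotone-convergence-plus-irreducibility machinery you sketch is re-deriving exactly what that lemma has already packaged for you, and your sketch of it is incomplete as stated (a product bound $\tfrac{\partial G_i}{\partial y_j}\tfrac{\partial G_j}{\partial y_i}\le 1$ does not bound $\tfrac{\partial G_i}{\partial y_j}$ alone unless you also pin $\tfrac{\partial G_j}{\partial y_i}$ away from $0$). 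Second, when you ``transfer along a self-substitution transform'' for part (a), Remark \ref{nonzero partials} as written covers positive solutions, characteristic points and the $\Lambda(a,\mathbf{b})=1$ question, but not finiteness of Jacobian entries; the paper uses Lemma \ref{subs preserv}(d) specifically for that transfer, which you should cite. For part (b), your appeal to Proposition \ref{WellKnown}(vi) is a clean shortcut; the paper's proof instead repeats the Implicit Function Theorem argument ($(\rho,\pmb\tau)$ interior and non-characteristic would give an analytic continuation of $\bT$ past $\rho$), which is the same content, so either is fine.
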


\begin{proof}
For item (a), first let $\Sigma^\star$ be a well-conditioned self-substitution 
transform of $\Sigma$ with all entries in $J_{\bG^\star}(x,\mathbf{y})$ non-zero (see Remark \ref{nonzero partials}).
By Lemma \ref{J entries}, all entries of $J_{\bG^\star}(\rho,\pmb{\tau})$ 
are finite. Then Lemma \ref{subs preserv}(d) shows that all entries of 
$J_{\bG}(\rho,\pmb{\tau})$ are finite.

For the remainder of the proof  
we can assume that all entries in $J_\bG$ are non-zero.  
For part (b) one argues just as in the case of a single equation---if $(\rho,\pmb{\tau})$
is an interior point but not a characteristic point then by the implicit function theorem
there would be an analytic continuation of $\bT(x)$ at $\rho$, which is impossible.

For (c), since $\Lambda$  is a continuous nondecreasing function by Corollary \ref{prop of Lambda}, and since the limit of 
$J_\bG\big(x,\bT(x)\big)$ as $x$ approaches $\rho$ from below is $J_\bG(\rho,\pmb{\tau})$, it
follows from Lemma \ref{easy CP} (b) that
$\Lambda(\rho,\pmb{\tau})\le 1.$

For (d), clearly $\Lambda(\rho,\pmb{\tau}) = 1$ implies
$1$ is an eigenvalue of $J_\bG(\rho,\pmb{\tau})$, and this in turn implies that
$(\rho,\pmb{\tau})\in\mathcal{CP}$.  Now suppose that $(\rho,\pmb{\tau}) \in \mathcal{CP}$.
Then 1 is an eigenvalue of 
$J_\bG(\rho,\pmb{\tau})$, so $\Lambda(\rho,\pmb{\tau})\ge 1.$
Thus (c) gives $\Lambda(\rho,\pmb{\tau}) = 1.$ 

\end{proof}

\begin{lemma}\label{structure of CP}
Let $\mathbf{y} = \bG(x,\mathbf{y})$ be a well-conditioned system.
If $(a,\mathbf{b})$ is a characteristic point and 
$(a,\mathbf{b})\neq (\rho,\pmb{\tau})$ then either
\begin{thlist}
\item
$b_i > \tau_i$ for all $i$, or 
\item
$a < \rho$ and $b_i > T_i(a)$ for all $i$, and some $b_j>\tau_j$.
\end{thlist}

\end{lemma}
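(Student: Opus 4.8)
The plan is to combine the minimality of the standard solution with two monotonicity facts: strict monotonicity of each $G_i$ in $\mathbf y$ under irreducibility, and (strict) monotonicity of the Perron root. As in Proposition~\ref{antichain} I would first invoke Remark~\ref{nonzero partials} to assume that every entry of $J_\bG$ is a non-zero power series, so that $J_\bG$ at any positive point of its domain is a strictly positive matrix; this is harmless because self-substitution transforms preserve $\mathcal{CP}$, the standard solution, and $(\rho,\pmb\tau)$ (Lemma~\ref{subs preserv}). I will also use the elementary facts that $\bG(0,\mathbf y)=\mathbf 0$ forces every monomial of every $G_i$, hence of every $\partial G_i/\partial y_j$, to involve $x$, and that $\tau_l>0$ for all $l$ while $T_l(a)>0$ for all $l$ when $a>0$.

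First I would record a minimality observation: for any characteristic point $(a,\mathbf b)$ the iteration $\mathbf z^{(0)}=\mathbf 0$, $\mathbf z^{(n+1)}=\bG(a,\mathbf z^{(n)})$ is non-decreasing and bounded above by $\mathbf b$ (since $\mathbf b=\bG(a,\mathbf b)$ is finite and $\bG$ is monotone in $\mathbf y$), hence converges, by monotone convergence, to $\bT(a)$; thus $\bT(a)\le\mathbf b<\infty$, which forces $a\le\rho$, and $\bT(\rho)=\pmb\tau$. Next I would isolate a strict-monotonicity principle: if $s>0$ and $\mathbf c\le\mathbf d$ are distinct positive vectors with $\mathbf c=\bG(s,\mathbf c)$ and $\mathbf d=\bG(s,\mathbf d)$ both finite, then $d_i>c_i$ for every $i$. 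Indeed, choosing $j_0$ with $d_{j_0}>c_{j_0}$: for each $i$, since $\partial G_i/\partial y_{j_0}\not\equiv 0$ the series $G_i$ has a monomial with positive coefficient in which $y_{j_0}$ occurs; that monomial takes a positive value at $(s,\mathbf c)$ and a strictly larger value at $(s,\mathbf d)$, while every other monomial is non-decreasing, so $c_i=G_i(s,\mathbf c)<G_i(s,\mathbf d)=d_i$.

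The lemma then splits into two cases according to the first coordinate. If $a=\rho$, then $\bT(a)=\pmb\tau\le\mathbf b$ and $\mathbf b\ne\pmb\tau$ (as $(a,\mathbf b)\ne(\rho,\pmb\tau)$), so the strict-monotonicity principle with $(\mathbf c,\mathbf d)=(\pmb\tau,\mathbf b)$ gives $b_i>\tau_i$ for all $i$, which is~(a). If $a<\rho$, I would first observe $\mathbf b\ne\bT(a)$: otherwise $(a,\mathbf b)=(a,\bT(a))$ would give $\Lambda(a,\mathbf b)<1$ by Lemma~\ref{easy CP}(b), contradicting $\Lambda(a,\mathbf b)\ge 1$ from Lemma~\ref{easy CP}(a); then the strict-monotonicity principle with $(\mathbf c,\mathbf d)=(\bT(a),\mathbf b)$ yields $b_i>T_i(a)$ for all $i$. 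It remains to produce some $j$ with $b_j>\tau_j$, and here I would argue by contradiction: if $\mathbf b\le\pmb\tau$ then $(a,\mathbf b)\le(\rho,\pmb\tau)$ with $a<\rho$ \emph{strictly}, so --- since every entry of $J_\bG$ is a non-zero power series all of whose monomials involve $x$, and the entries of $J_\bG(\rho,\pmb\tau)$ are finite and positive --- each entry of $J_\bG(a,\mathbf b)$ is strictly less than the corresponding entry of $J_\bG(\rho,\pmb\tau)$. Strict monotonicity of the Perron root on positive matrices (Perron-Frobenius theory, Proposition~\ref{PF Prop}) then gives $\Lambda(a,\mathbf b)<\Lambda(\rho,\pmb\tau)\le 1$, the last inequality being Lemma~\ref{ev of J(x,Tx)}(c), which contradicts $\Lambda(a,\mathbf b)\ge 1$.

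I expect the main obstacle to be this final step --- excluding $\mathbf b\le\pmb\tau$ when $a<\rho$: it is the only place where one needs a \emph{strict} decrease of the spectral radius, obtained by exploiting that $(\rho,\pmb\tau)$ strictly dominates $(a,\mathbf b)$ in the $x$-coordinate together with the fact that the Jacobian genuinely depends on $x$ (because $\bG(0,\mathbf y)=\mathbf 0$). The remaining pieces are routine once the reduction to an everywhere-positive Jacobian is in place; a secondary point requiring a little care is to justify $\bT(a)\le\mathbf b$ honestly via monotone convergence of the iterates rather than merely asserting minimality of the standard solution.
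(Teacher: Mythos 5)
Your proof is correct, and it takes a genuinely different route from the paper's. The paper's argument is entirely elementary and combinatorial: it writes each $b_i\le\tau_i$ as $b_i=T_i(\xi_i)$ for some $\xi_i\in(0,\rho]$, orders these $\xi_i$'s, and derives the chain $a\le\xi_1\le\cdots\le\xi_k\le\rho$ together with the strict inequalities $T_i(a)<b_i$ by repeatedly comparing $G_i$ evaluated at various intermediate points; the ``all below'' case is handled by a separate claim (the fixed point must then be $(a,\bT(a))$ with $a<\rho$) and ruled out via Lemma~\ref{easy CP}. You instead rely on three structural ingredients: (i) the minimality of $\bT(a)$ as a fixed point, established by monotone iteration from $\mathbf 0$, which immediately gives $\bT(a)\le\mathbf b$ and $a\le\rho$; (ii) a strict fixed-point comparison principle (two distinct positive fixed points at the same $x$-value must be strictly ordered in every coordinate), which replaces the paper's coordinate-by-coordinate $\xi_i$ bookkeeping; and (iii) strict monotonicity of the Perron root (Corollary~\ref{prop of Lambda}(b)) together with $\Lambda(\rho,\pmb\tau)\le 1$ (Lemma~\ref{ev of J(x,Tx)}(c)) to exclude $\mathbf b\le\pmb\tau$ when $a<\rho$. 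The paper's proof avoids any appeal to the spectral radius at $(\rho,\pmb\tau)$ and is self-contained modulo basic monotonicity; yours is shorter and leans on the Perron--Frobenius machinery that the paper has already set up for other purposes, at the cost of needing the finiteness and irreducibility of $J_\bG(\rho,\pmb\tau)$ (Lemma~\ref{ev of J(x,Tx)}(a)) at the key step. Both are sound; your case split on $a=\rho$ versus $a<\rho$ also makes explicit that conclusion~(b) holds in every case with $a<\rho$, which is slightly sharper than the stated disjunction.
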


\begin{proof}
Conditions (c) and (d) in the definition of well-conditioned ensures that each $G_i(x,\mathbf{y})$ 
depends on $x$.
In view of Remark \ref{nonzero partials}~assume that all second partials of each $G_i(x,\mathbf{y})$ with respect to the ${y_j}$ are non-zero. 
 Suppose that (a) does not hold.

\medskip
\noindent
{\bf Claim 1:} 
If  some $b_i >  \tau_i$ and some $b_j \le  \tau_j$ then 
$a < \rho$ and $ T_i(a) < b_i$ for $1\le i\le m$.

WLOG assume that $$b_1\le  \tau_1,\ldots, b_k\le  \tau_k$$ and 
$$b_{k+1} > \tau_{k+1},\ldots,b_m >  \tau_m.$$

{}From  the monotonicity and continuity of the $ T_i$ on $[0,\rho]$ it follows that for $1\le i\le k$ there exist unique 
$\xi_i\in (0,\rho]$ such that $$b_i= T_i(\xi_i).$$ WLOG assume that
$$0<\xi_1\le \cdots \le \xi_k\le \rho.$$
 For  $i\in\{1,\ldots,k\}$ 
 $$ T_i(\xi_1)\ \le\  T_i(\xi_i)\ =\  b_i$$
and for $k+1\le i\le m$
 $$ T_i(\xi_1)\  \le\  T_i(\rho)\ <  b_i.$$

Now suppose $\xi_1 < a$. Then
\begin{eqnarray*}
b_1 %&=&  T_1(\xi_1)\\
&=& G_1\big(\xi_1, T_1(\xi_1),\ldots, T_m(\xi_1)\big)\\
&<& G_1 (a,b_1,\ldots,b_m)\ =\ b_1,
\end{eqnarray*}
a contradiction. 
Thus   $$0< a \le \xi_1\le \cdots \le \xi_k\le \rho.$$ 

Using this one has, for $1\le i \le k$:
\begin{eqnarray*}
 T_i(\xi_i)%&=&b_i\\
% &=& G_i (a,b_1,\ldots,b_m)\\
&=& G_i \big(a, T_1(\xi_1),\ldots, T_k(\xi_k),b_{k+1},\ldots,b_m\big)\\
&>& G_i\big(a, T_1(a),\ldots, T_k(a), T_{k+1}(a),\ldots, T_m(a)\big)
\ =\  T_i(a).
\end{eqnarray*}
Thus   for $1\le i \le k$, 
$$
\begin{array}{l @{\quad} l}
 0 < a < \xi_i\le \rho\\
  T_i(a) < T_i(\xi_i) = b_i .
\end{array}
$$
Furthermore, for $k+1\le i\le m$,
$$
 T_i(a)< T_i(\rho) < b_i.
$$
Thus, in this case, for $1\le i \le m$ one has $T_i(a) < b_i$.
\medskip

\noindent
{\bf Claim 2:}
If $b_i \le  T_i(\rho)$ for all $i$ then $a < \rho$ and $ b_i= T_i(a)$ for all $i$.
\smallskip

Choose $\xi_i \in (0,\rho]$ such that $b_i =  T_i(\xi_i)$. 
WLOG one can assume $0 < \xi_1\le \cdots \le \xi_m \le \rho$. If $\xi_1 < a$
then
\begin{eqnarray*}
b_1%&=&G_1(a,b_1,\ldots,b_m)\\
&=&G_1\big(a, T_1(\xi_1),\ldots, T_m(\xi_m)\big)\\
&>&G_1\big(\xi_1, T_1(\xi_1),\ldots, T_m(\xi_1)\big)\\
&=& T_1(\xi_1)\ =\ b_1,
\end{eqnarray*}
a contradiction.  Thus $a \le \xi_1\le\cdots\le\xi_m \le \rho$.

Next one has
\begin{eqnarray*}
b_m%&=& T_m(\xi_m)\\
&=&G_m\big(\xi_m, T_1(\xi_m),\ldots, T_m(\xi_m)\big)\\
&\ge&G_m\big(a, T_1(\xi_1),\ldots, T_m(\xi_m)\big)\\
%&=&G_m(a,b_1,\ldots,b_m)\\
&=& b_m,
\end{eqnarray*}
so the $\ge$ step must be an equality, and this implies $\xi_m=a$. Thus
all $\xi_i = a$, and then for all $i$ one has $b_i =  T_i(a)$. Since
  $(a,\mathbf{b}) = (a,\mathbf{T}(a))$ is assumed to be a different 
  characteristic point from
$(\rho,\pmb{\tau})$, it follows that $a<\rho$.
\medskip

\noindent
{\bf Claim 3:}
It is not the case that
$b_i \le  \tau_i$ for all $i$.
\smallskip

Otherwise by Claim 2 we would have $(a,\mathbf{b}) = (a,\mathbf{T}(a))$ with 
$0<a<\rho$, and then by Lemma \ref{easy CP}  it would follow that
$(a,\mathbf{b})\notin\mathcal{CP}$.
But by assumption, $(a,\mathbf{b}) \in\mathcal{CP}$.

\end{proof}

%\subsection{When $(\rho,\pmb{\tau})$ is a Characteristic point.}\label{min char pts}

\begin{theorem}\label{location}
Suppose $(\rho,\pmb{\tau})$ is a characteristic point of
a well-conditioned system $\mathbf{y} = \bG(x,\mathbf{y})$.
 Then:
\begin{thlist}
\item
$\rho$ is the largest first coordinate of any characteristic point, that is
$$
\rho \ =\ \max\Big\{a : (a,\mathbf{b})\in\mathcal{CP}\Big\},
$$
\item
$(\rho,\pmb{\tau})$ is the only characteristic point whose first coordinate is  $\rho$.
\end{thlist}
\end{theorem}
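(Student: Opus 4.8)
The plan is to derive both parts directly from Lemma~\ref{structure of CP} together with the antichain property (Proposition~\ref{antichain}); all of the analytic work needed has already been carried out in establishing those two results, so what remains is a short order-theoretic argument.

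First I would record the easy half of (a): since $(\rho,\pmb{\tau})$ is assumed to be a characteristic point, $\rho$ belongs to the set $\{a : (a,\mathbf{b})\in\mathcal{CP}\}$, so it suffices to prove the single claim that every characteristic point $(a,\mathbf{b})$ with $(a,\mathbf{b})\neq(\rho,\pmb{\tau})$ satisfies $a<\rho$. This claim yields (a) at once (then $\rho$ is an upper bound for the first coordinates, and it is attained), and it also yields (b), since no characteristic point other than $(\rho,\pmb{\tau})$ can then have first coordinate equal to $\rho$.

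To prove the claim, fix a characteristic point $(a,\mathbf{b})\neq(\rho,\pmb{\tau})$ and apply Lemma~\ref{structure of CP}. In case (b) of that lemma one has $a<\rho$ outright and there is nothing further to check. In case (a) one has $b_i>\tau_i$ for all $i$; suppose, for contradiction, that $a\ge\rho$. Then $(\rho,\pmb{\tau})\le(a,\mathbf{b})$ in the coordinatewise order and the two points are distinct, contradicting Proposition~\ref{antichain}, which says the characteristic points form an antichain. Hence $a<\rho$ in this case as well, which establishes the claim and the theorem.

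The main obstacle is not in this theorem but upstream: the delicate monotonicity bookkeeping behind Lemma~\ref{structure of CP} and the partial-Taylor-expansion plus positive-left-eigenvector argument behind Proposition~\ref{antichain}. Granting those, the only point requiring a moment's care here is that case (a) of Lemma~\ref{structure of CP} must be excluded not only when $a>\rho$ but also when $a=\rho$, which is exactly why the antichain property (rather than merely Lemma~\ref{easy CP}) is the appropriate tool: it rules out comparability even at equality of the first coordinate.
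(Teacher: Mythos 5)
Your proof is correct and follows exactly the route the paper indicates: the paper's proof of Theorem~\ref{location} is the one-line instruction to combine Proposition~\ref{antichain} with Lemma~\ref{structure of CP}, and your write-up is simply a careful unpacking of that combination. In particular, your observation that the antichain property is needed to exclude case (a) of Lemma~\ref{structure of CP} even when $a=\rho$ is precisely the point the terse official proof leaves to the reader.
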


\begin{proof}
Use  Proposition \ref{antichain} and Lemma \ref{structure of CP}.
\end{proof}

Turning to 1-equation systems, we have the following results.

\begin{proposition}\label{1 eq CP}
A well-conditioned 1-equation system $y=G(x,y)$ has a most one characteristic point;
if there is such a point it must be the extreme point $(\rho,\tau)$ of the standard solution $T(x)$.
\end{proposition}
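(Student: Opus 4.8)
The plan is to prove the stronger statement that \emph{every} characteristic point of a well-conditioned one-equation system equals the extreme point $(\rho,\tau)$; the claim that there is at most one characteristic point then follows immediately. Write $G=G_1$, $y=y_1$, $T=T_1$, and note that the Jacobian is the $1\times 1$ matrix $[\,\partial G/\partial y\,]$, so $\Lambda(x,y)=\partial G/\partial y(x,y)$. First I would invoke Remark~\ref{nonzero partials} (a minimal self-substitution of a one-equation system is again a one-equation system, so the remark applies) to assume $\partial^2 G/\partial y^2\not\equiv 0$; then for each $a>0$ the function $\psi_a(y):=G(a,y)-y$ is \emph{strictly} convex on its interval of convergence $D_a:=\{\,y\ge 0:G(a,y)<\infty\,\}$, because $\psi_a''=\partial^2 G/\partial y^2(a,\cdot)$ is a nonzero power series in $y$ with nonnegative coefficients and hence is positive on the interior of $D_a$.

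Next I would record the local picture at a characteristic point $(a,b)$: there $\psi_a(b)=0$ and $\partial G/\partial y(a,b)=1$, so by strict convexity $\partial G/\partial y(a,\cdot)$ is strictly increasing, $\psi_a$ strictly decreases on $[0,b)$ and strictly increases on $D_a\cap(b,\infty)$, and therefore $\psi_a>0$ on $D_a\setminus\{b\}$. In other words, $b$ is the \emph{unique} fixed point of $G(a,\cdot)$ in $D_a$. (If $b=\sup D_a$, one works with the left derivative of $\psi_a$ at $b$, which equals $0$ since $\partial G/\partial y(a,b)=1<\infty$.)

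Then I would dispose of the case $a\le\rho$. Here $T(a)\le T(\rho)=\tau<\infty$ by Proposition~\ref{WellKnown}(iv), and $T(a)=G(a,T(a))$ by Proposition~\ref{WellKnown}(iii), so $T(a)$ is a fixed point of $G(a,\cdot)$ in $D_a$; by the previous step $T(a)=b$. If moreover $a<\rho$, then Lemma~\ref{easy CP}(b) gives $\partial G/\partial y(a,T(a))=\Lambda(a,T(a))<1$, contradicting $\partial G/\partial y(a,b)=1$. Hence $a=\rho$ and $b=T(\rho)=\tau$, i.e.\ $(a,b)=(\rho,\tau)$.

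The remaining case $a>\rho$ is the one I expect to be the main obstacle; the idea is to show that such a point would force $(\rho,\tau)$ into the interior of the domain and hence make it a characteristic point, which is incompatible with Theorem~\ref{location}. Suppose $(a,b)\in\mathcal{CP}$ with $a>\rho$. Since $(a,b)\ne(\rho,\tau)$ and alternative (b) of Lemma~\ref{structure of CP} requires $a<\rho$, alternative (a) must hold, so $b>\tau$. Because $G$ has nonnegative coefficients, $G(x,y)\le G(a,b)=b<\infty$ whenever $0\le x\le a$ and $0\le y\le b$; thus every point of the open box $(0,a)\times(0,b)$ is an interior point of $\domp(\bG)$, and in particular $(\rho,\tau)$, for which $0<\rho<a$ and $0<\tau<b$, lies in the interior of the domain of $\bG$. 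By Lemma~\ref{ev of J(x,Tx)}(b) this forces $(\rho,\tau)\in\mathcal{CP}$, whereupon Theorem~\ref{location}(a) gives $a\le\rho$, a contradiction. Hence no characteristic point has first coordinate exceeding $\rho$, and together with the preceding step this shows every characteristic point equals $(\rho,\tau)$, which is the assertion.
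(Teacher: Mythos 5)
Your proof is correct, and it parallels the paper's argument in overall shape (a case split on $a$ versus $\rho$) but diverges in how the case $a\le\rho$ is handled. In the case $a>\rho$ both you and the paper deduce $b>\tau$ from Lemma~\ref{structure of CP}, conclude that $(\rho,\tau)$ must lie in the interior of $\domp(\bG)$ and hence be a characteristic point by Lemma~\ref{ev of J(x,Tx)}(b), and then derive a contradiction; the paper invokes Proposition~\ref{antichain} directly, while you invoke Theorem~\ref{location}(a), which is built on the same ingredients, so these are essentially the same. For $a\le\rho$, however, the paper's route is a mean-value-theorem argument: from $b=G(a,b)$ and $T(a)=G(a,T(a))$ with $T(a)\le\tau<b$ it produces a $\xi\in(T(a),b)$ with $G_y(a,\xi)=1$, which contradicts $G_y(a,b)=1$ together with strict monotonicity of $G_y(a,\cdot)$. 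You instead exploit strict convexity of $\psi_a(y)=G(a,y)-y$ to show that $b$, being a zero of $\psi_a$ at which $\psi_a'$ also vanishes, is the \emph{unique} zero of $\psi_a$ on $D_a$; this forces $T(a)=b$, after which Lemma~\ref{easy CP}(b) rules out $a<\rho$, leaving $(a,b)=(\rho,\tau)$. Both routes rest on the same hypothesis $\partial^2 G/\partial y^2\not\equiv 0$; note that for $m=1$ this is already part of Definition~\ref{WellConditioned}(f), so your appeal to Remark~\ref{nonzero partials} is unnecessary (though harmless). Your version has the merit of isolating a clean intermediate claim (uniqueness of the fixed point of $G(a,\cdot)$ at a characteristic point), which also lets you avoid Lemma~\ref{structure of CP} in this branch of the argument; the paper's version is slightly shorter.
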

\begin{proof}
The characteristic system is
\begin{eqnarray*}
y&=&G(x,y)\\
1&=&G_y(x,y).
\end{eqnarray*}
Suppose $(a,b)\in\mathcal{CP}$ is different from $(\rho,\tau)$. 
Then $b>\tau$ by Lemma \ref{structure of CP}.

%\noindent
CASE 1: Suppose $a >\rho$. Then $(\rho,\tau)$ is in the interior of
$\domp(G)$, so $(\rho,\tau)\in\mathcal{CP}$ by Lemma \ref{ev of J(x,Tx)}(b).
But this violates the antichain condition of Proposition \ref{antichain} for $\mathcal{CP}$.

%\noindent
CASE 2: Suppose $a\le\rho$. Then $b=G(a,b)$ and $T(a) = G(a,T(a))$ leads to
$1 = G_y(a,\xi)$ for some $T(a) < \xi < b$. 
 But $G_y(a,b)=1$ since $(a,b)\in\mathcal{CP}$, so again we
have a contradiction by the strict monotonicity of $G_y(x,y)$ in $\domp(G)$.

Thus the only possible $(a,b)\in\mathcal{CP}$ is $(\rho,\tau)$.
\end{proof}

\begin{remark}Meir and Moon \cite{MeMo1989} prove that well-conditioned
1-equation systems have at
most one characteristic point in the {interior} of $\domp(G)$; and if such a point exists
then it must be $(\rho,\tau)$. See also
Flajolet and Sedgewick \cite{FlSe2009}, Chapter VII $\S$4.
\end{remark}

The {\em simple} 1-equation systems $y = xA(y)$ studied by Meir and Moon appear frequently in
the book \cite{FlSe2009} of Flajolet and Sedgewick. Letting $\rho_A$ be the radius of convergence of $A(y)$,
they use the hypothesis
\begin{equation} \label{hypMM}
\lim_{y\rightarrow {\rho_A}^-} \frac{yA'(y)}{A(y)} \ >\ 1
\end{equation}
to guarantee that $(\rho,\tau)$ is in the interior of the domain of convergence of $xA(y)$.
The following corollary improves on their results by giving a precise condition for there to be a characteristic point (which must be $(\rho,\tau)$ by Proposition \ref{1 eq CP}), and giving a precise condition for when $(\rho,\tau)$ is
a characteristic point on the boundary [in the interior] of $\domp(G)$.

\begin{corollary} \label{simple sys}
Suppose $y=G(x,y)$ is a well-conditioned 1-equation system with \[G(x,y) = xA(y),\] that is,
$A(y)$ is a power series $\sum_{n\ge 0} a_n y^n$ 
 with non-negative coefficients, and both $A(0)$ and $A''(y)$ are non-zero.
Let $B(y) = yA'(y) - A(y) + A(0)$. Then the characteristic system is equivalent to
\begin{eqnarray*}
B(y) &=& A(0)\\
x&=& \frac{y}{A(y)},
\end{eqnarray*}
and,  one has
\begin{thlist}
\item
$\mathcal{CP} = \textrm{\O}$ iff $B(\rho_A) < A(0)$
\item
$B(\rho_A) \ge A(0)$ implies $\mathcal{CP} = \{(\rho,\tau)\}$
\item
$B(\rho_A) = A(0)$ implies $(\rho,\tau)$ is on the boundary of $\domp(G)$
\item
$B(\rho_A) > A(0)$ implies $(\rho,\tau)$ is in the interior of $\domp(G)$.
\end{thlist}
\end{corollary}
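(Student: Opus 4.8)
The plan is to pass to the equivalent form of the characteristic system and then reduce everything to the monotonicity of $B$ together with Proposition~\ref{1 eq CP}. First I would record two positivity facts valid for $y>0$: one has $A(y)\ge A(0)>0$, and $A'(y)>0$ since $A''\ne 0$ forces some $a_n>0$ with $n\ge 2$, whence $A'(y)\ge na_ny^{n-1}>0$. Given these, for $x,y>0$ the equation $y=xA(y)$ is equivalent to $x=y/A(y)$; substituting this into $1=xA'(y)=G_y(x,y)$ and multiplying through by the positive quantity $A(y)$ yields $yA'(y)=A(y)$, i.e.\ $B(y)=A(0)$, and conversely those two equations give back $y=xA(y)$ and $xA'(y)=1$. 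This is the claimed reformulation.

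Next I would analyze $B$. Writing $A(y)=\sum_{n\ge 0}a_ny^n$, a direct computation gives $B(y)=\sum_{n\ge 2}(n-1)a_ny^n$, so $B$ has non-negative coefficients, $B(0)=0$, and $B'(y)=yA''(y)>0$ for $0<y<\rho_A$. Hence $B$ is continuous and strictly increasing on $[0,\rho_A)$, with $B(\rho_A):=\lim_{y\to\rho_A^-}B(y)=\sum_{n\ge 2}(n-1)a_n\rho_A^n\in(0,\infty]$. A characteristic point $(a,b)$ must have $A(b)<\infty$, $A'(b)<\infty$, and $B(b)=A(0)$; by strict monotonicity such a $b$ is unique if it exists, and it exists precisely when $B(\rho_A)\ge A(0)$ (using the intermediate value theorem on a closed subinterval of $[0,\rho_A)$ when $B(\rho_A)>A(0)$, and the estimate in the next paragraph when $B(\rho_A)=A(0)$). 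Since then $b>0$ forces $a=b/A(b)>0$, the point is a genuine positive characteristic point, so $\mathcal{CP}\neq\textrm{\O}$; combining with Proposition~\ref{1 eq CP} yields both (a) and (b).

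For (c) and (d) I would locate the solution $b$. If $B(\rho_A)>A(0)$, then already $B(y_0)>A(0)$ for some $y_0<\rho_A$, so $b\in(0,\rho_A)$; then $A$ is finite on a neighbourhood of $b$, and $(\rho,\tau)=(b/A(b),b)$ is interior to $\domp(G)$, which is (d). If $B(\rho_A)=A(0)$, then $B(y)<A(0)$ for every $y<\rho_A$, so the only candidate is $b=\rho_A$ (in particular $\rho_A<\infty$). The key step is to verify $\rho_A$ really lies where $A$ and $A'$ converge: from $\sum_{n\ge 2}(n-1)a_n\rho_A^n=A(0)<\infty$ and the inequalities $a_n\rho_A^n\le(n-1)a_n\rho_A^n$ and $na_n\rho_A^{n-1}\le 2(n-1)a_n\rho_A^{n-1}$, both valid for $n\ge 2$, one gets $A(\rho_A)<\infty$ and $A'(\rho_A)<\infty$; hence $(\rho,\tau)=(\rho_A/A(\rho_A),\rho_A)$ satisfies the characteristic system, and it lies on the boundary of $\domp(G)$ because $A(y)=\infty$ for all $y>\rho_A$, so no neighbourhood of this point is contained in $\domp(G)$. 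This is (c).

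The main obstacle is precisely the boundary case $B(\rho_A)=A(0)$: one must confirm that the formal solution $b=\rho_A$ is not spurious, i.e.\ that $A$ and $A'$ are finite there so that the characteristic equations actually hold, and that the resulting point is on the boundary rather than the interior. The two summability comparisons above dispose of the finiteness issue, and the defining property of the radius of convergence $\rho_A$ settles the boundary issue. All remaining steps are routine given Proposition~\ref{1 eq CP} and the strict monotonicity of $B$.
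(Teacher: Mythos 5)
Your proof is correct and follows essentially the same route as the paper: pass to the equivalent form of the characteristic system, observe $B(y)=\sum_{n\ge 2}(n-1)a_ny^n$ is strictly increasing, and appeal to Proposition~\ref{1 eq CP} for uniqueness. The paper's own proof is only a two-line sketch, so you are mostly supplying details that the authors omitted. The one place where you go beyond what the paper records is the boundary case $B(\rho_A)=A(0)$: the paper's note that $B$ is increasing on $[0,\rho_A]$ gives uniqueness of the candidate $b=\rho_A$, but it does not by itself show that $A(\rho_A)$ and $A'(\rho_A)$ are finite, which is needed for $(\rho_A/A(\rho_A),\rho_A)$ to actually lie in $\domp(G)$ and satisfy the characteristic equations. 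Your comparison estimates $a_n\rho_A^n\le(n-1)a_n\rho_A^n$ and $na_n\rho_A^{n-1}\le 2(n-1)a_n\rho_A^{n-1}$ (for $n\ge2$) cleanly close that gap, and the rest — the monotone-convergence justification that $B(\rho_A)=\rho_A A'(\rho_A)-A(\rho_A)+A(0)$ when the right side is finite, and the observation that $A(y)=\infty$ for $y>\rho_A$ forces the point onto the boundary — is sound. So this is the intended argument, just carried out in full.
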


\begin{proof}
It is easy to verify the alternative form of the characteristic equations given in the corollary, and then note that
$$
B(y) \ =\ \sum_{n\ge 2} (n-1)a_ny^n
$$
is strictly increasing on $[0,\rho_A]$.
\end{proof}

\begin{remark} In Proposition VI.5 of \cite{FlSe2009} on simple 1-equation systems, 
the full well-conditioned hypothesis is not used, but
instead the non-linearity condition $A''(y) \neq 0$ is replaced by the stronger condition \eqref{hypMM}. 
This implies $B(\rho_A) > A(0)$, and thus one has $(\rho,\tau)$ in
the interior of $\domp(\bG)$.

 In the sentence following this proposition it is claimed that replacing \eqref{hypMM} by 
 $\rho_A = \infty$ gives hypotheses which imply \eqref{hypMM}. This is not correct
unless one adds in the condition $A''(y)\neq 0$, that is, the correct formulation is:
 {\em well-conditioned} plus $\rho_A=\infty$
implies \eqref{hypMM}. 
\end{remark}

\section{Eigenpoints}\label{sec eigenpoints}

The results developed so far do not give a practical way of locating $(\rho,\pmb{\tau})$
for well-conditioned systems with more than one equation. 
Even if one is successful in finding all the characteristic
points, no means has yet been formulated to determine 
if $(\rho,\pmb{\tau})$ is among them. In this
section special characteristic points called eigenpoints are shown to
provide the correct analog of characteristic points when moving from
1-equation systems to multi-equation systems.

\begin{proposition} \label{Lambda lemma}
Suppose  $(a,\mathbf{b})$ is a characteristic point of 
the well-conditioned system $\mathbf{y} = \bG(x,\mathbf{y})$.
 Then
 $\Lambda\big(a,\mathbf{b}\big)=1$ iff
 $(a,\mathbf{b}) = (\rho,\pmb{\tau})$.
\end{proposition}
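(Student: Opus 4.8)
The plan is to prove both directions using the machinery already developed. For the ``if'' direction, suppose $(a,\mathbf{b}) = (\rho,\pmb{\tau})$. Since $(\rho,\pmb{\tau})$ is assumed to be a characteristic point, Lemma~\ref{ev of J(x,Tx)}(d) immediately gives $\Lambda(\rho,\pmb{\tau}) = 1$. That half is essentially a citation.

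For the ``only if'' direction, suppose $(a,\mathbf{b})$ is a characteristic point with $\Lambda(a,\mathbf{b}) = 1$, and assume toward a contradiction that $(a,\mathbf{b}) \neq (\rho,\pmb{\tau})$. First I would invoke Remark~\ref{nonzero partials} to reduce, without loss of generality, to the case where all second partials of the $G_i$ with respect to the $y_j$ are non-zero (Lemma~\ref{Lambda=1} guarantees that the condition $\Lambda = 1$ at a characteristic point is preserved under the self-substitution transform, and Lemma~\ref{subs preserv} preserves the set of characteristic points). Then I would apply Lemma~\ref{structure of CP}: either (a) $b_i > \tau_i$ for all $i$, or (b) $a < \rho$ and $b_i > T_i(a)$ for all $i$. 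In either case one gets a point $(a,\mathbf{b})$ that strictly dominates, coordinatewise, a point on the standard solution curve. The idea is then to run the same Taylor-expansion/positive-left-eigenvector argument as in the proof of Proposition~\ref{antichain}, but comparing $(a,\mathbf{b})$ against the appropriate reference point on the solution curve rather than against another characteristic point.

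Concretely, in case (b) set $a' = a$ and compare $\mathbf{b}$ with $\mathbf{T}(a)$: since $\bG(a,\mathbf{b}) = \mathbf{b}$ and $\bG(a,\mathbf{T}(a)) = \mathbf{T}(a)$, and $\mathbf{b} - \mathbf{T}(a) > \mathbf{0}$, the second-order Taylor expansion of $\bG(a,\mathbf{y})$ at $\mathbf{y} = \mathbf{T}(a)$ yields
$$
\mathbf{b} - \mathbf{T}(a)\ \ge\ J_\bG(a,\mathbf{T}(a))\big(\mathbf{b} - \mathbf{T}(a)\big) + \frac{1}{2}\left[\begin{array}{c}\frac{\partial^2 G_1(a,\mathbf{T}(a))}{\partial y_1^2}(b_1 - T_1(a))^2\\ \vdots\\ \frac{\partial^2 G_m(a,\mathbf{T}(a))}{\partial y_m^2}(b_m - T_m(a))^2\end{array}\right].
$$
Pairing with a positive left eigenvector of $J_\bG(a,\mathbf{T}(a))$ belonging to $\Lambda(a,\mathbf{T}(a))$, the strict positivity of the quadratic correction term forces $\Lambda(a,\mathbf{T}(a)) < 1$. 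On the other hand, $\Lambda$ is continuous and nondecreasing (Corollary~\ref{prop of Lambda}) on non-negative matrices, and $J_\bG(a,\mathbf{T}(a)) \le J_\bG(a,\mathbf{b})$ entrywise (both are finite here since $(a,\mathbf{b})$ lies in the domain), so $\Lambda(a,\mathbf{T}(a)) \le \Lambda(a,\mathbf{b}) = 1$ — which is consistent, so I need to push harder: I would instead compare $(a,\mathbf{b})$ directly against $(\rho,\pmb{\tau})$ in case (a), using the ordering $\mathbf{b} > \pmb{\tau}$, and in case (b) observe that $a < \rho$ gives $J_\bG(a,\mathbf{b})$ strictly dominated by $J_\bG(\rho,\pmb{\tau}')$ for suitable comparison, ultimately deriving $\Lambda(a,\mathbf{b}) > 1$ from the monotonicity and the strict quadratic term, contradicting $\Lambda(a,\mathbf{b}) = 1$.

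The main obstacle I anticipate is handling case (a) of Lemma~\ref{structure of CP}, where $a$ may exceed $\rho$ and $(\rho,\pmb{\tau})$ need not lie in the interior of the domain, so the clean implicit-function argument is unavailable; here the right move is to note $\bG(a,\mathbf{b}) = \mathbf{b} > \pmb{\tau} = \bG(\rho,\pmb{\tau})$ together with $\mathbf{b} > \pmb{\tau}$, apply the Taylor expansion of $\bG(a,\cdot)$ at $\pmb{\tau}$ (legitimate since $(a,\mathbf{b})$, hence $(a,\pmb{\tau})$, lies in the domain and $a \ge \rho$ makes $J_\bG(a,\pmb{\tau}) \ge J_\bG(\rho,\pmb{\tau})$ entrywise), and conclude via the positive-left-eigenvector pairing that $\Lambda(a,\pmb{\tau}) < 1$ would be contradicted — i.e., the strict quadratic term forces $(1 - \Lambda(a,\pmb{\tau}))\,\bfv(\mathbf{b} - \pmb{\tau}) > 0$ hence $\Lambda(a,\pmb{\tau}) < 1$, yet monotonicity gives $1 = \Lambda(\rho,\pmb{\tau}) \le \Lambda(a,\pmb{\tau})$, the desired contradiction. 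I would write out the case (b) argument analogously with $\mathbf{T}(a)$ replaced by $\pmb{\tau}$ where needed, exploiting $b_j > \tau_j$ for the distinguished index $j$. This reduces everything to bookkeeping already modeled in Proposition~\ref{antichain} and Lemma~\ref{easy CP}.
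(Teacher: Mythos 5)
Your ``if'' direction is exactly right. Your case~(a) handling is salvageable but muddled: you dismiss the implicit-function argument as ``unavailable,'' yet your argument invokes $\Lambda(\rho,\pmb\tau)=1$, which is \emph{only} known because $(\rho,\pmb\tau)$ is a characteristic point, and the only way to conclude that here is that $(a,\mathbf{b})\in\domp(\bG)$ with $a>\rho$, $\mathbf{b}>\pmb\tau$ places $(\rho,\pmb\tau)$ in the interior of the domain, whence Lemma~\ref{ev of J(x,Tx)}(b),(d) applies. So the ``clean argument'' is in fact available and you silently rely on it; the paper just uses it directly and then contradicts the antichain property.

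The real gap is in case~(b) (equivalently, the case $a\le\rho$). You correctly observe that Taylor-expanding $\bG(a,\cdot)$ at $\mathbf{T}(a)$ and pairing with a left eigenvector of $J_\bG\big(a,\bT(a)\big)$ yields $\Lambda\big(a,\bT(a)\big)<1$, and you correctly flag that this is ``consistent'' with $\Lambda(a,\mathbf{b})=1$ rather than contradictory --- indeed it is exactly Lemma~\ref{easy CP}(b), so it gives nothing new. Your proposed repair --- comparing $J_\bG(a,\mathbf{b})$ against some $J_\bG(\rho,\pmb\tau')$ --- cannot work by monotonicity alone: since $a<\rho$ lowers the first coordinate while $b_j>\tau_j$ raises some $y$-coordinate, the entrywise comparison of the two Jacobians goes in opposite directions, and no single-inequality appeal to Corollary~\ref{prop of Lambda}(b) can produce $\Lambda(a,\mathbf{b})>1$. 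The missing ingredient is a \emph{multivariate mean value theorem} applied to $\bG(a,\mathbf{b})-\bG\big(a,\bT(a)\big)=\mathbf{b}-\bT(a)$: it produces a matrix $M=\big(\partial G_i/\partial y_j(a,\bfv_{ij})\big)$ with intermediate arguments $\bfv_{ij}$ satisfying $T_i(a)<v_{ij}(j)<b_i$ (and the other entries pinned between $\bT(a)$ and $\mathbf{b}$), so that $\mathbf{b}-\bT(a)>\mathbf{0}$ is an eigenvector of $M$ with eigenvalue $1$, giving $\Lambda(M)\ge 1$. Because all second partials are nonzero after the self-substitution reduction, every entry of $M$ is \emph{strictly} less than the corresponding entry of $J_\bG(a,\mathbf{b})$, so $\Lambda(M)<\Lambda(a,\mathbf{b})$ by strict monotonicity, whence $\Lambda(a,\mathbf{b})>1$, the desired contradiction. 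This MVT step, not a Taylor bound paired with an eigenvector at a reference point, is what the argument actually turns on, and it is what your proposal is missing.
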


\begin{proof}
 We can assume that no partial $\partial G_i/\partial y_j$ is zero.
 The direction $(\Leftarrow)$ follows from Lemma \ref{ev of J(x,Tx)} (d).
To prove the direction $(\Rightarrow)$ assume $(a,\mathbf{b}) \neq (\rho,\pmb{\tau})$.
By Lemma \ref{structure of CP} one has two cases to consider:
\begin{thlist}
\item[I]
 $a > \rho$ and for all $i$, $b_i > \tau_i$
 \item[II]
$a \le \rho$ and for all $i$, $b_i > T_i(a)$.
\end{thlist}

For  (I),  $(\rho,\pmb{\tau})$ is in the interior of the domain of $\bG$,
so by Lemma \ref{ev of J(x,Tx)} (b) it is a characteristic point. However this contradicts
Proposition \ref{antichain} which says the characteristic points form an antichain. 

For (II),
{}from the equations 
\begin{eqnarray*}
\bG(a,\mathbf{b}) - \mathbf{b}&=&\bf0\\
\bG\big(a,\bT(a)\big)\ - \bT(a)&=&\bf0
\end{eqnarray*}
one can apply a multivariate version of the mean value theorem to derive:
\begin{equation}\label{1 is ev}
\left( \frac{\partial G_i}{\partial y_j}(a,\bfv_{ij})\right)\big(\mathbf{b} - \bT(a)\big)\ =\ \mathbf{b} - \bT(a)
\end{equation}
with $\bfv_{ij} = \big(v_{ij}(1),\ldots,v_{ij}(m)\big)$ satisfying
$$
\begin{cases}
v_{ij}(r)= T_j(a)&\text{if }r >j\\
T_i(a) < v_{ij}(r) < b_i&\text{if }r =j\\
v_{ij}(r)= b_j&\text{if }r <j.
\end{cases}
$$
Clearly \eqref{1 is ev} shows that $\lambda = 1$ is an eigenvalue of 
$\left(\dfrac{\partial G_i}{\partial y_j}(a,\mathbf{v}_{ij})\right)$,
and from the properties of the $\bfv_{ij}$ we see that
for all $i,j$ 
$$
\frac{\partial G_i}{\partial y_j}(a,\mathbf{v}_{ij})\ <\ 
\frac{\partial G_i}{\partial y_j}(a,\mathbf{b})
$$
since  each ${\partial G_i}/{\partial y_j}$ depends on all the variables
$x,y_1,\ldots,y_m$.

{}From these remarks and the monotonicity of $\Lambda$ one has
$$ 
1\ \le\ 
\Lambda\left( \frac{\partial G_i}{\partial y_j}(a,\mathbf{v}_{ij})\right) \ <\ 
\Lambda( a,\mathbf{b}),
$$
showing that  $(a,\mathbf{b}) \neq (\rho,\pmb{\tau})$ implies $\Lambda(a,\mathbf{b})>1$.

\end{proof}

\begin{definition}
A characteristic point $(a,\mathbf{b})$
is an
\em{eigenpoint} if
$\Lambda\big(a,\mathbf{b}\big)=1$.
\end{definition}

The following theorem summarizes the key results for well-conditioned systems.

\begin{theorem}\label{RCP Thm}
Let $\Sigma: \mathbf{y} = \bG(x,\mathbf{y})$ be a well-conditioned system.
Then the following hold:
\begin{thlist}
\item
$(\rho,\pmb{\tau})\in\domp(\bG)$
\item
If $(\rho,\pmb{\tau})$ is in the interior of $\domp(\bG)$ then it is an 
eigenpoint.
\item
The system $\Sigma$ has at most  one eigenpoint.
\item
If there is an eigenpoint of $\Sigma$ then it must be $(\rho,\pmb{\tau})$.
\item
If there is no eigenpoint of $\Sigma$  then $(\rho,\pmb{\tau})$ lies on the
boundary of $\domp(\bG)$ and one has
$\Lambda(\rho,\pmb{\tau})<1$.
\end{thlist}

\end{theorem}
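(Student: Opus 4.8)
The plan is to derive all five clauses by assembling results already proved, chiefly Lemma \ref{ev of J(x,Tx)} and Proposition \ref{Lambda lemma}; essentially no new argument is needed, since the substantive work (the antichain property of Proposition \ref{antichain}, the structural dichotomy of Lemma \ref{structure of CP}, and their synthesis in Proposition \ref{Lambda lemma}) is already in place.

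First I would dispatch (a) directly from the solution identity: by Proposition \ref{WellKnown}(iii)--(iv) the identity $\bT(x)=\bG(x,\bT(x))$ holds at $x=\rho$, with $\rho\in(0,\infty)$ and each $\tau_i=T_i(\rho)$ finite and positive, so $G_i(\rho,\pmb{\tau})=\tau_i<\infty$ for all $i$ and hence $(\rho,\pmb{\tau})\in\domp(\bG)$. For (b), if $(\rho,\pmb{\tau})$ lies in the interior of $\domp(\bG)$ then Lemma \ref{ev of J(x,Tx)}(b) makes it a characteristic point, and then Lemma \ref{ev of J(x,Tx)}(d) gives $\Lambda(\rho,\pmb{\tau})=1$; thus $(\rho,\pmb{\tau})$ is an eigenpoint by definition.

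Clauses (d) and (c) are the crux, but both drop out of the forward implication of Proposition \ref{Lambda lemma}: every characteristic point $(a,\mathbf{b})$ with $\Lambda(a,\mathbf{b})=1$ equals $(\rho,\pmb{\tau})$. Since an eigenpoint is by definition precisely a characteristic point at which $\Lambda=1$, any eigenpoint must be $(\rho,\pmb{\tau})$, which is (d); and (c) is then immediate, as two distinct eigenpoints would both equal $(\rho,\pmb{\tau})$. For (e) I would take the contrapositive of (b): if $\Sigma$ has no eigenpoint, then $(\rho,\pmb{\tau})$ is not interior to $\domp(\bG)$, and since $(\rho,\pmb{\tau})\in\domp(\bG)$ by (a) it must lie on the boundary; moreover Lemma \ref{ev of J(x,Tx)}(c) gives $0<\Lambda(\rho,\pmb{\tau})\le 1$, and if $\Lambda(\rho,\pmb{\tau})=1$ then Lemma \ref{ev of J(x,Tx)}(d) would make $(\rho,\pmb{\tau})$ a characteristic point with $\Lambda=1$, i.e.\ an eigenpoint, contrary to hypothesis; so $\Lambda(\rho,\pmb{\tau})<1$.

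There is no genuine obstacle here; the one thing to check carefully is that Proposition \ref{Lambda lemma} and Lemma \ref{ev of J(x,Tx)} are indeed stated for an arbitrary well-conditioned system, with the reduction to the all-nonzero-partials case absorbed internally via Remark \ref{nonzero partials}, so that Theorem \ref{RCP Thm} inherits no extra hypothesis and is simply a convenient repackaging of the prior results.
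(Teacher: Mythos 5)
Your proof is correct and is exactly the assembly the paper intends: the paper presents Theorem~\ref{RCP Thm} with the preface ``The following theorem summarizes the key results'' and supplies no explicit proof, so it is expected to be derived just as you do from Proposition~\ref{WellKnown}, Lemma~\ref{ev of J(x,Tx)}, and Proposition~\ref{Lambda lemma}. The only tiny point worth making explicit in (a) is that positivity of the $\tau_i$ (needed for membership in $\domp(\bG)$, not just finiteness) follows from condition (d) of Definition~\ref{WellConditioned}, which forces each $T_i$ to have a positive coefficient; you assert it but attribute it to Proposition~\ref{WellKnown}, which only states finiteness.
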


This result can be superior to Proposition \ref{location} for 
computing purposes since the
latter requires that one know all
 characteristic points of $\Sigma$ before being able to isolate the one
candidate for $(\rho,\pmb{\tau})$. Theorem \ref{RCP Thm} says that if
 one can find a characteristic point $(a,\mathbf{b})$ with $J_{\mathbf{G}}(a,\mathbf{b})$ 
 having largest positive eigenvalue 1, it is $(\rho,\pmb{\tau})$. 
As with the 1-equation case, if there are no eigenpoints of $\Sigma$, then
new methods are needed.

Flajolet and Sedgewick do not make use of the theory of characteristic
points in their work on multi-equation systems in \cite{FlSe2009} beyond citing the work of Drmota. Instead, they consider the polynomial case in the general setting of arbitrary non-degenerate $m$-equation systems
$\mathbf{P}(x,\bfy) = 0$ in Chap.~VII. 

Let $\cC$ be the set of solution points $(a,\mathbf{b})\in \bbC^{m+1}$ of such a system.
The non-degeneracy condition implies that each 
$\cC_i := \{(a,b_i) : (a,\mathbf{b})\in\cC\}$ is an algebraic curve.  For such curves there is a simple procedure to find a finite set $X_i$
 of points $(a,b_i)$ such that all singularities of $\cC_i$ are in $X_i$.

When applying the general method of \cite{FlSe2009} to the special case of 
well-conditioned systems $\bfy = \bG(x,\bfy)$, to find 
the extreme point $(\rho,\pmb{\tau})$, one can bypass the considerable work 
 of (1) determining the branch points $(a,b_i)$ of the algebraic curves 
$\cC_i$ among the points in $X_i$, and then (2) studying the Puiseux expansions of 
branches of $\cC_i$ about these branch points. Instead one only needs to test the finitely many 
points in $\{(a,\mathbf{b}) : (a,b_i)\in X_i\}$ to see which is the eigenpoint of the system
--- this will be $(\rho,\pmb{\tau})$.

\section{Drmota's Theorem Revisited}\label{Drmota sec}

In 1993 Lalley \cite{Lalley1993} proved that the solutions $y_i =T_i(x)$ to a well-conditioned
\textit{polynomial} system $ \mathbf{y} = \bG(x,\mathbf{y})$ would have a square-root singularity 
at $\rho$, and thus one had the familiar P\'olya asymptotics for
the coefficients.\footnote{Having a polynomial system is a very strong condition since it immediately
tells you that $\rho$ is a branch point, which leads to a Puiseux expansion;  it is only a matter of determining the order of the branch point (which is nonetheless a nontrivial task).}
In 1997 \cite{Drmota1997}, and again in 2009 \cite{Drmota2009}, Drmota presented the first 
sweepingly general theorem concerning
the asymptotic behavior of the coefficients of solutions of a well-conditioned system, namely 
the coefficients will again satisfy the same law  that P\'olya found
 to be true for several classes of trees (see \cite{PoRe1987}).  However, as explained in 
 Footnote \ref{Drmota footnote}, the hypotheses that Drmota has for the characteristic points
of the system seem to be incorrect in the first publication, and vague in the second.\footnote
{
The book \cite{FlSe2009} gives a detailed study of well-conditioned polynomial systems, but
only states the result for general well-conditioned systems. This statement is the 1997 version of Drmota's theorem, including the error in the hypotheses. The simplest patch is to replace the condition that `some characteristic point $(a,\mathbf{b})$ is in the interior of the domain' with 
the requirement that `$(\rho,\pmb{\tau})$ is in the interior of the domain'.
} 
To prove the theorem one needs to be able to show that $(\rho,\pmb{\tau})$ is in the 
interior of the domain of $\bG(x,\mathbf{y})$.
The following subsection gives a clear statement of the hypotheses needed, along with a slightly different 
proof of the key induction step for the proof.

\subsection{Drmota's Theorem}\label{Drmota revisited}
The following version is somewhat simpler than that presented by Drmota since there 
are no parameters.

\begin{theorem}\label{drmota}
Let $\Sigma: \mathbf{y} = \bG(x,\mathbf{y})$ be a well-conditioned system with standard
solution $\mathbf{T}(x)$.
Suppose  $ \Sigma$ has an eigenpoint $(\rho, \pmb{\tau})$ in the interior of $\domp(\bG)$. Then each $T_i(x)$ is the standard solution to a 
well-conditioned 1-equation system $y_i = \widehat{G}_i(x,y_i)$ with $(\rho,\tau_i)$ in the interior of $\domp(\widehat{G}_i)$. 
Thus each $T_i(x)$
has a square-root singularity at $\rho$, and the familiar P\'olya asymptotics (see, e.g., \cite{BBY2006}) hold for
the non-zero coefficients. 
\end{theorem}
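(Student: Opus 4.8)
The plan is to induct on the number $m$ of equations, peeling off one equation at a time by eliminating a variable and arranging that the reduced system again satisfies the hypotheses. First I would invoke Remark~\ref{nonzero partials} (together with Lemmas~\ref{subs preserv} and \ref{Lambda=1}, which show a self-substitution transform has the same positive domain, the same extreme point, and the same value of $\Lambda$ there, hence again has an eigenpoint $(\rho,\pmb{\tau})$ interior to its domain) to reduce to the case in which every second partial $\partial^2 G_i/\partial y_j\partial y_k$ is non-zero; in particular $J_\bG(x,\bfy)$ has no zero entries, so $J_\bG(\rho,\pmb{\tau})$ is a positive matrix, its entries finite by Lemma~\ref{ev of J(x,Tx)}. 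For the base case $m=1$, an eigenpoint forces $G(\rho,\tau)=\tau$ and $\partial G/\partial y(\rho,\tau)=1$, while $\partial^2 G/\partial y^2(\rho,\tau)>0$ (nonlinearity in Definition~\ref{WellConditioned} plus interiority and nonnegative coefficients) and $\partial G/\partial x(\rho,\tau)>0$; this is the classical square-root-singularity situation, and $[x^n]T(x)\sim C\rho^{-n}n^{-3/2}$ follows by singularity analysis (\cite{MeMo1989}, \cite{FlSe2009}).

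For the inductive step $m\ge2$, to handle a fixed target index $i_0$ I would pick any $j\neq i_0$ and eliminate $y_j$: since $\bG(0,\bfy)=\mathbf{0}$ gives $\partial G_j/\partial y_j(0,\mathbf{0})=0$, the equation $y_j=G_j(x,\bfy)$ has a unique formal power series solution $y_j=g\big(x,(y_i)_{i\neq j}\big)$ with nonnegative coefficients and $g(0,\cdot)=0$, and substituting it into the other equations gives an $(m-1)$-equation system $\Sigma'$ with $\widehat G_i=G_i\big(x,(y_i)_{i\neq j},g\big)$. I would then verify, in turn: (1) $\Sigma'$ is well-conditioned --- nonnegativity, $\widehat G_i(0,\cdot)=0$ and $\widehat G_i(\cdot,\mathbf{0})\neq0$ are immediate; holomorphy near the origin is the analytic implicit function theorem for $y_j-G_j$ (whose $y_j$-derivative there is $1-0\neq0$); irreducibility and nonlinearity hold because, under the standing assumption, each $\widehat G_i$ depends on every remaining variable and $\partial^2\widehat G_i/\partial y_k\partial y_\ell$ dominates the non-zero $\partial^2 G_i/\partial y_k\partial y_\ell$; (2) the standard solution of $\Sigma'$ is $(T_i)_{i\neq j}$, since $T_j=G_j(x,\bT)$ and uniqueness give $T_j=g\big(x,(T_i)_{i\neq j}\big)$ and hence $T_i=\widehat G_i\big(x,(T_i)_{i\neq j}\big)$, so by Proposition~\ref{WellKnown} the extreme point of $\Sigma'$ is $\big(\rho,(\tau_i)_{i\neq j}\big)$; (3) that extreme point is interior to $\domp(\widehat\bG)$. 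Granting (3), Lemma~\ref{ev of J(x,Tx)} applied to $\Sigma'$ makes its interior extreme point a characteristic point with $\Lambda_{\Sigma'}=1$, i.e.\ an eigenpoint interior to the domain, so the induction hypothesis applies and in particular realizes $T_{i_0}$ as the standard solution of a well-conditioned $1$-equation system with $(\rho,\tau_{i_0})$ interior to its domain; the base case then gives the square-root singularity and the $C_{i_0}\rho^{-n}n^{-3/2}$ asymptotics (see also \cite{BBY2006}).

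I expect step (3) to be the main obstacle, and it is the only place the eigenpoint hypothesis is used in an essential way. Since $J_\bG(\rho,\pmb{\tau})$ is positive (hence irreducible) of size $m\ge2$ with spectral radius $\Lambda(\rho,\pmb{\tau})=1$, the strict monotonicity of the Perron root under proper principal submatrices (a standard Perron--Frobenius fact; cf.\ Proposition~\ref{PF Prop}), applied to the $1\times1$ submatrix, yields $\partial G_j/\partial y_j(\rho,\pmb{\tau})<1$ --- strengthening Lemma~\ref{J entries}(c) for $m\ge2$. Consequently $y_j-G_j(x,\bfy)$ is analytic near the interior point $(\rho,\pmb{\tau})$, vanishes there, and has non-vanishing $y_j$-derivative $1-\partial G_j/\partial y_j(\rho,\pmb{\tau})$; also $g\big(\rho,(\tau_i)_{i\neq j}\big)=\tau_j<\infty$, as the monotone limit of $g\big(x,(T_i(x))_{i\neq j}\big)=T_j(x)$ when $x\to\rho^-$. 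The analytic implicit function theorem then continues $g$ past $\big(\rho,(\tau_i)_{i\neq j}\big)$, and the permanence principle for nonnegative series (Proposition~\ref{permanence}) upgrades this to convergence of the power series $g$ on a polydisc strictly containing that point; continuity of $g$ there, with interiority of $(\rho,\pmb{\tau})$ for $\domp(\bG)$, then gives convergence of each $\widehat G_i=G_i\big(x,(y_i)_{i\neq j},g\big)$ slightly beyond $\big(\rho,(\tau_i)_{i\neq j}\big)$, which is exactly (3). In short, the whole induction rests on the inequality $\partial G_j/\partial y_j(\rho,\pmb{\tau})<1$ supplied by the eigenpoint condition; the rest is careful bookkeeping of domains of convergence under the substitution $y_j\mapsto g$.
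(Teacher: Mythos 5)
Your proof is correct and takes essentially the same approach as the paper's: eliminate one variable at a time via the implicit function theorem, using the eigenpoint hypothesis --- through the strict inequality $\partial G_j/\partial y_j(\rho,\pmb{\tau})<1$, which is exactly the content of Corollary~\ref{prop of Lambda}(a) for a positive Jacobian with $\Lambda=1$ --- to push the implicit function's analyticity past $(\rho,(\tau_i)_{i\neq j})$, with Proposition~\ref{permanence} identifying it with the power-series solution. You supply more detail than the paper does on step (3), i.e.\ why the reduced system's extreme point lies in the interior of its positive domain, but the induction scheme, the use of the implicit function theorem, and the key Perron--Frobenius input are the same.
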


\begin{proof}
One only needs to consider the case that the system has at least two equations,
and one can assume all second partials of the $G_i$ with respect to the $y_j$ are non-zero.
The following shows that 
eliminating the first equation (and $y_{1}$) yields a well-conditioned 
system with one less equation which has the standard solution
$\big(T_2(x),\ldots,T_m(x)\big)$ and an eigenpoint in the interior of the 
domain of the system.

By the Implicit Function Theorem one can solve the first equation
$$
y_{1}\ =\ \bG_{1}(x,\mathbf{y})
$$
for $y_{1}$, say
$$
y_{1}\ =\  H_{1}(x,y_{2},\ldots,y_{m}),
$$
where $H_{1}$ is holomorphic in a neighborhood of the origin, that is,
$H_{1}(0,\mathbf{0})=0$ and 
$$
H_{1}(x,y_{2},\ldots,y_{m}) \ =\ 
 G_{1}\big(x,H_{1}(x,y_{2},\ldots,y_{m}),y_{2},\ldots,y_{m}\big)
 $$
 in a neighborhood of the origin.
 
 Since the $T_{i}(x)$ take small values near the origin (as they are continuous functions that vanish at $x=0$), it follows that
 $$
H_{1}\big(x,T_{2}(x),\ldots,T_{m}(x)\big) \ =\ 
 G_{1}\Big(x, H_{1}\big(x,T_{2}(x),\ldots,T_{m}(x)\big),T_{2}(x),\ldots,T_{m}(x)\Big)
 $$
 holds in a neighborhood of the origin.
 Also one has 
 $$
 T_{1}(x)\ =\  G_{1}\big(x,T_{1}(x),T_{2}(x),\ldots,T_{m}(x)\big)
 $$ 
 holding in a neighborhood of the origin, so by the uniqueness of solutions
 in such a neighborhood, we must have
 $$
  T_{1}(x)\ =\  H_{1}\big(x,T_{2}(x),\ldots,T_{m}(x)\big)
 $$ 
in a neighborhood of the origin. By Proposition \ref{permanence}, this equation
actually holds globally for $|x| \le \rho$; in particular $H_{1}$ converges at
$(\rho,\tau_{2},\ldots,\tau_{m})$. 
By Corollary \ref{prop of Lambda}(a)
 the Jacobian $1-\dfrac{\partial G_1}{\partial y_1}$
 of the equation $y_{1}\ =\ G_{1}(x,\mathbf{y})$ does not vanish at $(\rho,\pmb{\tau})$.
Thus, by the Implicit Function Theorem, $ H_{1}$ is holomorphic at $\big(\rho,\tau_2,\ldots,\tau_m\big)$.
 
 Now discarding the first equation and substituting $H_1(x,y_2,\ldots,y_m)$ for $y_1$ in 
 the remaining equations gives a well-conditioned system of $m-1$ equations 
 \[y_i = G_i^\star(x,y_2,\ldots,y_m),\] $2\le i\le m$, with standard solution
 $\big(T_{2}(x),\ldots,T_{m}(x)\big)$ whose extreme point \[\big(\rho,\tau_2,\ldots,\tau_m\big)\] 
 is an eigenpoint, since it is a characteristic point of the system that is in the interior of 
 $\domp(\bG^\star)$. Thus the elimination procedure can continue if $\bG^\star$ consists of 
 more than one equation.

\end{proof}

The extreme point of a well-conditioned polynomial system, such as Example \ref{4 CP}, 
is always a characteristic point, and, as Lalley \cite{Lalley1993} proved, the coefficients 
of the solutions $T_i(x)$ have the classical P\'olya form
$C_i\rho^{-n}n^{-3/2}$. Drmota \cite{Drmota1997} extended Lalley's result to 
well-conditioned power series systems
with the extreme point in the interior of the domain of the system. A natural (and desirable) 
direction to consider for further research would be to drop the irreducible requirement. 
However, even in the polynomial case, this leads to substantial challenges, see Example \ref{ex reducible}. 

\subsection{A Wealth of Examples}
In \cite{BBY2006} we showed that single equation systems formed from a wide array of 
standard operators like Multiset, Cycle and Sequence led to square-root singularities and 
P\'olya asymptotics for the coefficients. The arguments used there easily carry over to the 
setting of systems of equations since the conditions in that paper force the positive domain 
to be an open set, and this guarantees that $(\rho,\pmb{\tau})$ is an interior point of the 
domain of the system, leading to a wealth of examples. 

%%%%%%%%%  OPEN PROBLEMS   %%%%%%%%%%%%%%

\section{Some Open Problems about Characteristic Points of Well-Conditioned Systems}

\begin{question}
How can one locate $(\rho,\pmb{\tau})$ if it is not a characteristic point?
\end{question}

\begin{question}\label{CP finite Q}
Is the set of characteristic points always finite?
\end{question}
As one can see in the examples, Appendix \ref{ex section}, a system
 can have multiple characteristic points; the two equation polynomial system in
 Example \ref{4 CP} has four characteristic points. Example \ref{real solutions}
shows that the set of \textit{real} solutions to the characteristic system need not  be finite.
However Question \ref{CP finite Q} asks if the set of \textit{positive} solutions is finite.

%SECTION:   A Collection of Basic Examples

\appendix
\section{A Collection of Basic Examples}\label{ex section}

The following examples explore the behavior of characteristic points of well-conditioned systems---the computational steps have been omitted. However the reader can find complete details online in the original preprint
\cite{BBY Arxiv}. 

    %%%%%%%%%%%% EXAMPLES FOR 1-EQUATION SYSTEMS %%%%%%%%
    
 \subsection{Examples for 1-equation systems}
 
For 1-equation systems the following two examples show the three kinds of possible behavior, namely: 
(i) there is a characteristic point which is an interior point and thus
equal to $(\rho,\tau)$, 
(ii) there is a characteristic point which is a boundary point and  thus equal to 
$(\rho,\tau)$, and 
(iii) there is no characteristic point. 
If $(\rho,\tau)$ is in the interior  of the domain of $G$ then 
$x=\rho$ is a square-root singularity of $T(x)$.\footnote
{The possibilities for the nature of this singularity when $(\rho,\tau)$ is on the boundary 
of the domain of $G$ have not been classified.
Examples constructed along the lines of Proposition \ref{comp prop} show that one
can have $2^k$-root singularities. Comments VI.18 and VI.19 on p.~407 of \cite{FlSe2009}
state that one can have $\alpha$-root singularities, for $1 < \alpha \le 2$.
}

Each example starts with an equation $y=G(x,y)$ where the characteristic point 
$(\rho,{\tau})$ is in the interior of the domain of $G(x,y)$. 
Then the example is modified to give a system
$y=G^\star(x,y)$ with $(\rho^\star,\tau^\star)$ on the boundary of the domain 
of $G^\star(x,y)$. $(\rho^\star,\tau^\star)$ is  a characteristic point in 
Example \ref{first 1-example} 
but not in Example \ref{second 1-example}.

%%%%%% FIRST 1-EXAMPLE

\begin{example}\label{first 1-example}
Let $G(x,y) = x(1+y^2)$.
For the characteristic system
$$
\bigg\{
\begin{array}{r c l}
y&=&x(1+y^2)\\
1&=&2xy
\end{array}%\right.
$$
of $y=G(x,y)$ one has  the characteristic point $(1/2,1)$, an interior point of 
the domain of $G(x,y)$, so for the standard solution
$y=S(x)$ of $y=G(x,y)$ 
 one has $(\rho,\tau) = (1/2,1)$ . The established theory for such 
a system (see \cite{FlSe2009}, Chapter VII) shows that $S(x)$ has a square-root singularity at $x=\rho$.

Next let $G^\star(x,y) =S(x)(1 + y^2)/2$.
For the characteristic system
$$
\bigg\{
\begin{array}{r c l}
y&=&S(x)(1+y^2)/2\\
1&=&S(x)y
\end{array}
$$
once again the characteristic point is $(1/2,1)$,
 but now it is a boundary point of the domain of $G^*(x,y)$.
An examination of the standard solution (see Proposition \ref{comp prop}) of $y=G^*(x,y)$, 
namely  $y = T(x) = S\big(S(x)/2\big)$, shows that it has 
a fourth-root singularity at $x=1/2$.
\end{example}

%%%%%%% SECOND 1-EXAMPLE
\begin{example} \label{second 1-example}
Let $G(x,y) = x\big(1 + 2 y + 2y^2 \big)$.
The characteristic system
$$
\bigg\{
\begin{array}{r c l}
y&=&x\big(1 + 2y + 2y^2\big)\\
1&=&2x(1+2y)
\end{array}
$$
of $y=G(x,y)$ has the characteristic point
$$\bigg(\frac{\sqrt{2} - 1}{2},\frac{\sqrt{2}}{2}\bigg),$$
 an interior point of the domain of $G(x,y)$, so for the standard solution
$y=S(x)$ of $y=G(x,y)$  one has $\rho = \big(\sqrt{2} - 1\big)/{2}$ 
and $\tau = {\sqrt{2}}/{2}$. 
 $S(x)$ has a square-root singularity at $x=\rho$.

Next let $G^\star(x,y) =x\big(1 + S(x) + y + 2y^2\big)$. 
The standard solution of $y=G^\star(x,y)$ is 
again $y=S(x)$, so $(\rho^*,\tau^*)=(\rho,\tau)$.
The characteristic system
$$
\bigg\{
\begin{array}{r c l}
y&=&x\big(1 + S(x) + y + 2y^2\big)\\
1&=&x(1+4y)
\end{array}
$$
of $y=G^\star(x,y)$ has no characteristic point since the only candidate is $(\rho,\tau)$ and 
$$\rho(1+4\tau)=(1/2)\big(\sqrt{2}-1\big)\big(1+2\sqrt{2}\big)\neq 1.$$
$(\rho,\tau)$ is a boundary point
of the domain of $G^*(x,y)$ whose location is not detected by the method of characteristic points.
\end{example}

\begin{remark} On p.~83 of their  1989 paper \cite{MeMo1989} Meir and Moon offer an interesting example of a 1-equation system
without a characteristic point, namely $y = A(x)e^y$ where $A(x)=(1/6)\sum_n x^n/n^2$.
The characteristic system is
$$
y\ =\ A(x)e^y,\quad 1\ =\ A(x)e^y,
$$
so a characteristic point $(a,b)$ must have $b=1$, $A(a)=1/e$. But $1/e$ is not in the
range of $A(x)$, so there is no characteristic point. One can nonetheless easily find $(\rho,\tau)$ in this case since $(\rho,\tau)$ must lie on the boundary of the domain of $A(x)e^y$. Thus $\rho=1$, and then $\tau = A(1)e^\tau = (\pi^2/36)e^\tau$, so 
$\tau \approx 0.41529$.

The paper goes on to claim that by differential equation methods one can show that
the standard solution $y=S(x)$ has coefficient asymptotics $s(n)\sim C/n$.
However this cannot be true since such a solution would diverge at its radius of convergence $\rho=1$
(see \cite{BBY2006}), whereas the given equation $y=A(x)e^y$ is nonlinear in $y$, so the solution
must converge at $\rho$.
\end{remark}

\subsection{1-equation framework}\label{1frame}

This subsection gives a framework for 1-equation examples which will be
useful for building the 2-equation examples in $\S$\ref{2family}.

\begin{proposition}
Let $A(x)$ be the standard solution of
\begin{equation}\label{A eqn}
y\ =\ x(1 + \fa y + \fb y^2)
\end{equation}
where $\fa \ge 0$ and $\fb >0$. Then the following hold:
\begin{thlist}
\item
\[%begin{equation}\label{A sol}
A(x) \ =\ \frac{1}{2\fb x}\Big((1-\fa x)  - \sqrt{(1-\fa x)^2 - 4\fb x^2}\Big).
\]%end{equation}
\item
$A(x)$ has non-negative coefficients.
\item
A sufficient condition for $A(x)$ to have integer coefficients is that $\fa$ and $\fb$ are integers. 
\item
$A(x)$ has a positive radius of convergence $\rho_A$ given by
\[%begin{equation}  \label{A rad}
\rho_A \ =\ \frac{1}{\fa + 2\sqrt{\fb}}.
\]%end{equation}
\item
$\tau_A := A(\rho_A)$ is finite and is given by
$$
\tau_A\ =\ \frac{1}{\sqrt{\fb}}.
$$
\item
$\rho_A$ is a square-root branch point of the algebraic curve defined by \eqref{A eqn}.
\item
$(\rho_A,\tau_A)$ is the unique characteristic point of \eqref{A eqn}, that is, it is the unique
positive solution $(x,y)$ to
\begin{eqnarray*} 
y&=&x(1 + \fa y + \fb y^2)\\
1&=&x(\fa + 2\fb y). 
\end{eqnarray*}
\end{thlist}
\end{proposition}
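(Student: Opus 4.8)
The plan is to solve the quadratic equation \eqref{A eqn} explicitly for $y=A(x)$ and then read off every item from the closed form. First I would rewrite \eqref{A eqn} as $\fb x y^2 + (\fa x - 1)y + x = 0$ and apply the quadratic formula, choosing the branch with the minus sign so that $A(0)=0$ (a Taylor expansion near $x=0$ shows the plus-sign branch blows up like $1/(\fb x)$, and \eqref{A eqn} has a unique power series solution vanishing at $0$ by Proposition \ref{WellKnown}); this gives (a). For (b), non-negativity of the coefficients is immediate from Proposition \ref{WellKnown}(i), since $y=x(1+\fa y+\fb y^2)$ is a well-conditioned $1$-equation system (here $\fa\ge 0$, $\fb>0$); alternatively one expands the radical. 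For (c) I would note that the recursion coming from \eqref{the sol}, namely $a_{n+1} = [\text{polynomial in } \fa,\fb \text{ and earlier } a_k \text{ with integer coefficients}]$, produces integers whenever $\fa,\fb\in\bbZ$; this is the ``sufficient condition'' wording, so no converse is needed.

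For (d) and (f), the radius of convergence of $A(x)$ is governed by the branch point of the algebraic curve, i.e.\ the smallest positive $x$ where the discriminant $(1-\fa x)^2 - 4\fb x^2$ vanishes. Setting $(1-\fa x)^2 = 4\fb x^2$ and taking positive square roots gives $1-\fa x = 2\sqrt{\fb}\,x$, hence $x = 1/(\fa + 2\sqrt{\fb})$; I should check the other root $1-\fa x = -2\sqrt\fb x$ gives $x = 1/(\fa - 2\sqrt\fb)$, which is either negative or larger in modulus, so $\rho_A = 1/(\fa+2\sqrt\fb)$, proving (d), and since the discriminant vanishes to first order there, $\rho_A$ is a square-root branch point, proving (f). For (e), one substitutes $x=\rho_A$ into (a): the radical vanishes, leaving $\tau_A = (1-\fa\rho_A)/(2\fb\rho_A)$; using $1-\fa\rho_A = 2\sqrt\fb\,\rho_A$ this collapses to $\tau_A = \sqrt\fb\,\rho_A/(\fb\rho_A) = 1/\sqrt\fb$, which is manifestly finite, giving (e).

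For (g), I would apply Proposition \ref{1 eq CP}: a well-conditioned $1$-equation system has at most one characteristic point, and if it has one it is the extreme point $(\rho_A,\tau_A)$. Since $G(x,y)=x(1+\fa y+\fb y^2)$ is a polynomial, Proposition \ref{WellKnown}(vi) (or the remark that the extreme point of a polynomial system is always a characteristic point) guarantees $(\rho_A,\tau_A)\in\mathcal{CP}$, so it is the unique one. As a sanity check one verifies directly that $(\rho_A,\tau_A)$ solves $1 = x(\fa+2\fb y)$: indeed $\rho_A(\fa + 2\fb\tau_A) = \rho_A(\fa + 2\fb/\sqrt\fb) = \rho_A(\fa + 2\sqrt\fb) = 1$.

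I do not anticipate a genuine obstacle here; the statement is a worked example and every item reduces to the quadratic formula plus the already-established general theory (Propositions \ref{WellKnown} and \ref{1 eq CP}). The only point requiring a little care is the branch selection in (a) and the discriminant-root comparison in (d) — one must confirm that the relevant branch is the power-series solution and that the chosen discriminant zero is the one nearest the origin; both are routine but should be stated explicitly rather than asserted.
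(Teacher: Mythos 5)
The paper leaves this as an ``(Exercise.)'', and your proof is a correct and complete filling-in: branch selection via $A(0)=0$, non-negativity from Proposition \ref{WellKnown}, the discriminant analysis for (d)--(f), and Proposition \ref{1 eq CP} plus the polynomial-system remark (or direct substitution, which in fact gives $\fb y^2=1$ immediately) for (g). One small imprecision: when $\fa=0$ the second discriminant root $1/(\fa-2\sqrt\fb)=-1/(2\sqrt\fb)$ has the \emph{same} modulus as $\rho_A$, not a larger one, so the cleaner way to close (d) is to invoke Pringsheim (the coefficients of $A$ are non-negative, so the determining singularity lies on the positive real axis) rather than a modulus comparison.
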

\begin{proof}
(Exercise.)
\end{proof}

\begin{proposition} \label{comp prop}
Given $\fa,\fc\ge 0$ and $\fb,\fd>0$ let $A(x)$ be the standard solution of
$$
y\ =\ x(1 + \fa y + \fb y^2)
$$
and  let $S(x)$ be the standard solution of
\[
y\ =\ x(1 + \fc y + \fd y^2).
\]
Let $T(x)$ be the standard solution of
\[%begin{equation}\label{T eqn}
y\ =\ A(x)(1 + \fc y + \fd y^2).
\]%end{equation}
Then the following hold:
\begin{thlist}
\item
$
T(x) \ =\ S(A(x)).
$
\item
$
T(x) \ =\ \dfrac{1}{2\fd A(x)}\Big((1-\fc A(x))  - \sqrt{(1-\fc A(x))^2 - 4\fd A(x)^2}\Big).
$
\item
$T(x)$ has non-negative coefficients.
\item
A sufficient condition for $T(x)$ to have integer coefficients is that $\fa,\fb,\fc,\fd$ are integers.
\item 
If
$\sqrt{\fb} \ =\ \fc + 2\sqrt{\fd}$ then 
$$(\rho_T,\tau_T)\ =\ (\rho_A,\tau_S)\ =\  \Big(\frac{1}{\fa + 2\sqrt{\fb}},\frac{1}{\sqrt{\fd}}\Big),$$
and $T(x)$ has a fourth-root singularity at $\rho_T$.

\end{thlist}
\end{proposition}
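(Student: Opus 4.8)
The plan is to derive all six statements from the composition identity of part~(a) together with the explicit formulas and singularity data for $A(x)$ and $S(x)$ supplied by the preceding proposition. For part~(a), first note that $y=A(x)(1+\fc y+\fd y^2)$ is itself a well-conditioned $1$-equation system: $A(x)$ has non-negative coefficients and $A(0)=0$, so $A(x)(1+\fc y+\fd y^2)$ has non-negative coefficients, vanishes at $x=0$, takes the non-zero value $A(x)$ at $y=0$, and is non-linear in $y$ since $\fd>0$; hence by Proposition~\ref{WellKnown} it has a \emph{unique} formal power-series solution with non-negative coefficients, namely $T(x)$. Since $A(0)=0$, substituting $z=A(x)$ into the defining identity $S(z)=z(1+\fc S(z)+\fd S(z)^2)$ is legitimate and shows that $S(A(x))$ — a formal power series with non-negative coefficients, being a composition of such — solves the very same equation; by uniqueness, $T(x)=S(A(x))$.

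Parts~(b), (c), (d) then follow immediately. For (b), substitute $z=A(x)$ into the closed form for $S(z)$ from the preceding proposition. For (c), $T=S\circ A$ is a composition of power series with non-negative coefficients and zero constant term (or invoke Proposition~\ref{WellKnown} directly). For (d), if $\fa,\fb,\fc,\fd$ are integers then $A$ and $S$ have integer coefficients by the preceding proposition, and the composition of two integer power series with zero constant term again has integer coefficients (only finitely many terms contribute to each coefficient).

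The substantive step is part~(e). The hypothesis $\sqrt{\fb}=\fc+2\sqrt{\fd}$ is precisely the statement $\tau_A=\rho_S$, because $\tau_A=1/\sqrt{\fb}$ and $\rho_S=1/(\fc+2\sqrt{\fd})$. Since $A(x)=\sum a_nx^n$ has non-negative coefficients, $|A(x)|\le A(|x|)<\lim_{t\to\rho_A^-}A(t)=\tau_A=\rho_S$ for $|x|<\rho_A$, so $A$ maps the disk $|x|<\rho_A$ into the disk of convergence of $S$; hence $T=S\circ A$ is analytic on $|x|<\rho_A$, giving $\rho_T\ge\rho_A$, and continuity at the endpoints gives $T(\rho_A)=S(\tau_A)=S(\rho_S)=\tau_S=1/\sqrt{\fd}<\infty$. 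To identify the singularity, use that $A$ has a square-root branch point at $\rho_A$ (preceding proposition), so near $\rho_A$ one has $A(x)=\tau_A-\gamma\sqrt{\rho_A-x}+O(\rho_A-x)$ with $\gamma>0$, and likewise $S(z)=\tau_S-\delta\sqrt{\rho_S-z}+O(\rho_S-z)$ near $\rho_S$ with $\delta>0$; composing, $\rho_S-A(x)=\gamma\sqrt{\rho_A-x}+O(\rho_A-x)$, whence $T(x)=\tau_S-\delta\sqrt{\gamma}\,(\rho_A-x)^{1/4}+o\bigl((\rho_A-x)^{1/4}\bigr)$, a fourth-root singularity; in particular $\rho_T=\rho_A$ and $\tau_T=\tau_S$. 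Equivalently one reads this off the formula in (b): there the discriminant factors as $(1-\sqrt{\fb}A(x))\bigl(1-(\fc-2\sqrt{\fd})A(x)\bigr)$, the second factor being analytic and strictly positive at $\rho_A$ and the first vanishing like $\sqrt{\rho_A-x}$.

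I expect the main obstacle to be the bookkeeping in part~(e): one must check that the composed Puiseux expansion has a \emph{non-zero} coefficient of $(\rho_A-x)^{1/4}$ (so the singularity is exactly fourth-root, not milder), and that the prefactor $(1-\fc A(x))/(2\fd A(x))$ and the second discriminant factor are analytic and non-vanishing at $\rho_A$ — a short computation using $\sqrt{\fb}-\fc=2\sqrt{\fd}$ shows the prefactor value there is indeed $1/\sqrt{\fd}$. That no singularity of $T$ occurs strictly inside $(0,\rho_A)$ is handled cleanly by the disk estimate above, and the extra singularity of $A$ at $-\rho_A$ when $\fa=0$ does not affect the local analysis at $\rho_T=\rho_A$.
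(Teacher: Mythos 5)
Your argument is correct, and since the paper marks this proposition ``(Exercise.)'' there is no written proof to compare against; but your approach (uniqueness of the standard solution for (a), then composing the two local Puiseux expansions for (e)) is the natural one and is exactly what the surrounding remarks on ``critical composition'' suggest. In particular you handle the two potential gaps cleanly: the disk estimate $|A(x)|\le A(|x|)<\tau_A=\rho_S$ for $|x|<\rho_A$ gives $\rho_T\ge\rho_A$, and the explicit $(\rho_A-x)^{1/4}$ term with nonzero coefficient $\delta\sqrt{\gamma}$ shows $T$ is not analytic at $\rho_A$, pinning down $\rho_T=\rho_A$; your alternative reading from the factored discriminant $(1-\sqrt{\fb}\,A(x))\bigl(1-(\fc-2\sqrt{\fd})A(x)\bigr)$ is a nice cross-check that the second factor stays positive at $\rho_A$. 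One cosmetic remark: for (a), once you have observed that $y=A(x)(1+\fc y+\fd y^2)$ is well-conditioned, the uniqueness in Proposition \ref{WellKnown}(i) only directly gives uniqueness among \emph{non-negative} solutions, so your parenthetical observation that $S\circ A$ has non-negative coefficients (and zero constant term) is what makes the appeal to uniqueness legitimate; you do say this, so the step is fine.
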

 \begin{proof}
(Exercise.)
\end{proof}
 
 The restriction $\sqrt{\fb} \ =\ \fc + 2\sqrt{\fd}$
 is called the \textbf{critical composition condition} \mbox{\rm (CCC)}; this is
 the condition needed for $T(x) = S(A(x))$ to be a critical composition (as defined by 
 Flajolet and Sedgewick \cite{FlSe2009}, p.~411).

    %%%%%% MULTI EQUATION SYSTEMS
\subsection{Multi-equation systems} \label{2family}

%%%%%%%%%%%%%  A FAMILY OF 2-EQUATION SYSTEMS   %%%%%%

\begin{proposition} \label{main example thm}
Suppose
$$
\fa,\fc_1 \ge 0,\quad\fb, \fc_2,\fd>0,\quad \sqrt{\fb} = \fc  +2\sqrt{\fd},\quad \fc = \fc_1 + \fc_2.
$$
Let $A(x)$, $S(x)$, and $T(x)$ be as in Proposition \ref{comp prop}
Then the following hold:
\begin{thlist}
\item
The quadratic system
$$
(SYS):\qquad
\left\{
\begin{array} {c c l}
y_1 &=& A(x)\big (1 + \fc_1 T(x) + \fc_2 y_2 + \fd {y_1}^2\big)\\
y_2 &=& A(x) \big(1 + \fc_1 T(x) +  \fc_2 y_1 + \fd {y_2}^2\big)
\end{array}
\right.
$$
is well-conditioned, and the standard solution
is $y_1 = y_2 = T(x)$.
\item
The extreme point $(\rho,\tau,\tau)$ of $(SYS)$ is given by
$$(\rho,\tau,\tau)\ =\ \bigg(\frac{1}{\fa + 2\sqrt{\fb}},\frac{1}{\sqrt{\fd}},\frac{1}{\sqrt{\fd}}\bigg).$$
It is on the boundary of the domain of $(SYS)$.
\item
$T(x)=S(A(x))$ has a fourth-root singularity at $x=\rho$.
 \item
 A positive point $(x,y,y)$ is a characteristic point of $(SYS)$ 
  iff either
$$
(\star)\qquad\left\{
\begin{array}{ c c l}
1 &=&  A(x)\bigg(\fc_2 + 2\sqrt{\fd\big(1 + \fc_1 T(x)\big)}\;\bigg)\\
y &=& \dfrac{1 - \fc_2 A(x)}{2\fd A(x)}
\end{array} 
\right.
$$
or
$$
(\star\star)\qquad\left\{
\begin{array}{ c c l}
1 &=&  A(x)\bigg(\fc_2 +2 \sqrt{{\fc_2}^2  + \fd\big(1+\fc_1T(x)\big)}\;\bigg)\\
y &=& \dfrac{1 + \fc_2 A(x)}{2\fd A(x)}.
\end{array}
\right.
$$
 
 \item
If $\fc_1 = 0$ then there are exactly two characteristic points of the form $(x,y,y)$: 
the first is $(\rho,\tau,\tau)$, a boundary characteristic point obtained from $(\star)$, 
and the second is the
unique positive solution to $(\star\star)$, an interior characteristic
point.  %Explicitly, the second solution is $(x, y, y)$ where 
This is the only
case where $(\star)$ contributes a characteristic point, namely $(\rho,\tau,\tau)$, and
this is the only case where $(\rho,\tau,\tau)$ is a characteristic point.
\item\label{c1=2c2}
If $0 < \fc_1 =2\fc_2$ then there is a unique characteristic point of the form $(x,y,y)$: 
it is the unique positive solution to $(\star\star)$ and it is a boundary point different
{}from $(\rho,\tau,\tau)$.
\item\label{c1<2c2}
If $0<\fc_1<2\fc_2$ then  there is a unique characteristic point of the form $(x,y,y)$: 
it is the unique positive solution to $(\star\star)$ and it is an interior point that is
different from $(\rho,\tau,\tau)$.
\item
If $2\fc_2<\fc_1$ then  there are no characteristic points of the form $(x,y,y)$, so
again $(\rho,\tau,\tau)$ is not a characteristic point.
\item The second characteristic point in (e) and the unique characteristic points in (f) and (g) are given explicitly by
  \begin{align*}
    x & = \frac{\fc+\sqrt{\fc^2+\ff}}{\fa\fc + 2\fc^2 + \ff + \fb + (\fa+2\fc)\sqrt{\fc^2+\ff}}\\
    y & = \frac{\fc+ \fc_2 + \sqrt{\fc^2+\ff}}{2\fd}
  \end{align*} where
\[
  \ff = -6\fc_1\fc_2 + 3\fc_2^2 + 4\fd.
\]
\end{thlist}

\end{proposition}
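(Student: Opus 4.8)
The plan is to name the point explicitly, verify it is a characteristic point, and then invoke the uniqueness already recorded in parts (e)--(g). Write $\sigma := \sqrt{\fc^2+\ff}$; expanding, $\fc^2+\ff = (\fc_1-2\fc_2)^2 + 4\fd$, so $\sigma$ is a well-defined positive real with $\sigma \ge 2\sqrt{\fd}$, equality holding exactly when $\fc_1 = 2\fc_2$. Put
$$
u_0 := \frac{1}{\fc+\sigma},\qquad q_0 := \tfrac12(\sigma + 2\fc_2 - \fc_1),\qquad s_0 := \frac{1}{q_0},
$$
and set $x_0 := u_0/(1+\fa u_0+\fb u_0^2)$ and $y_0 := (1+\fc_2 u_0)/(2\fd u_0)$. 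First I would record that, after clearing denominators and using $\sigma^2 = \fc^2+\ff$ and $1/u_0 = \fc+\sigma$, these $x_0$ and $y_0$ are literally the two expressions in the statement; for $x_0$ this is just the identity $(\fc+\sigma)^2 + \fa(\fc+\sigma) + \fb = 2\fc^2 + \ff + \fa\fc + \fb + (\fa+2\fc)\sigma$. So it suffices to show that $(x_0,y_0,y_0)$ is a characteristic point of $(SYS)$; by part (d), this reduces to checking that the point is positive, that $A(x_0)=u_0$ and $T(x_0)=s_0$, and that $(\star\star)$ holds there.

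The next step is to pin down the correct branches. From the defining equation of $A$ one has, for $x\in[0,\rho_A]$, that $x = A(x)/(1+\fa A(x)+\fb A(x)^2)$, and the map $u\mapsto u/(1+\fa u+\fb u^2)$ is a strictly increasing bijection of $[0,\tau_A]$ onto $[0,\rho_A]$; since $\fc+\sigma\ge\sqrt{\fb}$ gives $u_0\le 1/\sqrt{\fb}=\tau_A$, it follows that $x_0\le\rho_A$ and $A(x_0)=u_0$. The identical argument applied to $S$ shows $S(u_0)=s_0$ provided $s_0\le\tau_S=1/\sqrt{\fd}$, i.e.\ $q_0\ge\sqrt{\fd}$; and indeed $q_0 = \tfrac12(\sigma+2\fc_2-\fc_1)\ge\tfrac12\sigma\ge\sqrt{\fd}$ because $\fc_1\le 2\fc_2$ in cases (e)--(g) and $\sigma\ge 2\sqrt{\fd}$ --- this is the one place the restriction to (e)--(g) is needed. (Along the way one checks that $q_0$ is a root of $q^2-\sigma q+\fd$, whose discriminant is $(\fc_1-2\fc_2)^2$, so $s_0$ really does solve $s=u_0(1+\fc s+\fd s^2)$.) Hence $T(x_0)=S(A(x_0))=S(u_0)=s_0$ by Proposition \ref{comp prop}(a). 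Positivity of $x_0$ and $y_0$ is clear once one notes $2\fc^2+\ff=\fc^2+\sigma^2>0$, so the denominator of $x_0$ is a sum of non-negative terms containing $\fb>0$.

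It then remains to verify the two equations constituting $(\star\star)$ at $(x_0,y_0)$. The second equation, $y_0=(1+\fc_2 u_0)/(2\fd u_0)$, is the definition of $y_0$. For the first, I would rewrite it as $1/u_0-\fc_2 = 2\sqrt{\fc_2^2+\fd(1+\fc_1 s_0)}$: the left-hand side is $\fc_1+\sigma>0$, and using $\fd/q_0=\sigma-q_0$ (from $q_0^2-\sigma q_0+\fd=0$) the radicand equals $\fc_2^2+\fd+\fc_1(\sigma-q_0)$, which after substituting $q_0=\tfrac12(\sigma+2\fc_2-\fc_1)$ and $\sigma^2=(\fc_1-2\fc_2)^2+4\fd$ collapses to $\tfrac14(\fc_1+\sigma)^2$; squaring back gives the equality. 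Thus $(x_0,y_0,y_0)$ is a positive point satisfying $(\star\star)$, hence a characteristic point of $(SYS)$; by the uniqueness in parts (e) (for the second characteristic point), (f), and (g), it must be the one described there, and the identification from the first paragraph yields the stated formulas.

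The only part of this that is not bookkeeping is the branch selection: one has to be sure that $A(x_0)$ and $S(u_0)$ are the ``small'' roots of their respective quadratics, and the two inequalities that guarantee this --- $\fc+\sigma\ge\sqrt{\fb}$ (which uses the standing hypothesis $\sqrt{\fb}=\fc+2\sqrt{\fd}$) and $q_0\ge\sqrt{\fd}$ (which uses $\fc_1\le 2\fc_2$, i.e.\ that we are in case (e), (f) or (g)) --- are precisely where the hypotheses enter. The remaining verifications are routine algebra, of the kind the published version leaves to the reader.
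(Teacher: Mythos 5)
The paper itself gives no proof of this proposition --- the stated ``proof'' is literally \emph{(Exercise.)}, with complete computational details deferred to the arXiv preprint --- so there is no in-paper argument to compare against. Your argument covers only item (i), taking items (d)--(h) as established; that is a sensible scoping, since (i) is the only item whose verification has real content given the earlier ones, but it does mean you have not addressed (a)--(h).

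As a proof of (i) modulo the earlier items, your argument is correct. The algebra you invoke checks out: $\fc^2+\ff=(\fc_1-2\fc_2)^2+4\fd$, hence $\sigma\ge 2\sqrt{\fd}$ with equality iff $\fc_1=2\fc_2$; $q_0$ is a root of $q^2-\sigma q+\fd$; the radicand in $(\star\star)$ does reduce to $\tfrac14(\fc_1+\sigma)^2$; and the denominator identity $(\fc+\sigma)^2+\fa(\fc+\sigma)+\fb=\fa\fc+2\fc^2+\ff+\fb+(\fa+2\fc)\sigma$ is exactly the rewriting that matches $x_0$ to the displayed formula. More importantly, you have correctly isolated the two branch-selection inequalities as the substantive steps: $\fc+\sigma\ge\sqrt{\fb}$, which is the condition (CCC) together with $\sigma\ge 2\sqrt{\fd}$, guarantees $u_0\le\tau_A$ and hence $A(x_0)=u_0$; and $q_0\ge\sqrt{\fd}$, which uses $\fc_1\le 2\fc_2$ (i.e.\ restriction to cases (e)--(g)), guarantees $s_0\le\tau_S$ and hence $S(u_0)=s_0$. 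Without those inequalities one would be on the wrong branch of the respective quadratics and $T(x_0)$ would not equal $s_0$. Once $(x_0,y_0,y_0)$ is shown to be a positive solution of $(\star\star)$, the uniqueness recorded in (e)--(g) finishes the identification, exactly as you say.
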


\begin{proof}
(Exercise.)

\end{proof}

%%%%%%   EXAMPLES OF 2-EQUATION SYSTEMS   %%%%%%

%\subsection{Examples of 2-equation systems}

Now we look at three well-conditioned examples that show 
some of the 
varied behavior of characteristic points 
when one has more than one equation in the system. 
In the first example there are two characteristic points, 
both in the interior of the domain of $\bG(x,\mathbf{y})$ 
and one of them is $(\rho,\pmb{\tau})$. In the second example
one has a characteristic point in the interior of the domain of $\bG(x,\mathbf{y})$ and
$(\rho,\pmb{\tau})$ is a characteristic point on the boundary of the domain. 
In the third example one has a characteristic point in the interior of the domain 
of $\bG(x,\mathbf{y})$ but $(\rho,\pmb{\tau})$ is not a characteristic 
point. In the second and third examples, $\rho$ is not a square-root singularity
of the solutions.
Such examples show the need 
for a more subtle use of characteristic points in the pursuit of information on
$(\rho,\pmb{\tau})$ for multi-equation systems.

%%%%%%%%%% FIRST EXAMPLE 2-EQUATIONS
\begin{example}\label{first example}
For the system of two equations
\begin{eqnarray*}
y_1&=&x\cdot\big(1+y_2+2y_1^2\big)\\
y_2&=&x\cdot\big(1+y_1+2y_2^2\big)
\end{eqnarray*}
add
$$(1-4xy_1)(1-4xy_2) - x^2 \ =\ 0$$
to obtain the characteristic system. This is a polynomial system, so all characteristic
points will be in the interior of the domain; and since $(\rho,\tau_1,\tau_2)$ is also in the interior
it must be a characteristic point. Let $(a,b,c)$ be a characteristic point. By a computation we see that $b\neq c$ is impossible. 
%then
Thus the characteristic points are the positive triples $(a,b,b)$ satisfying
\begin{eqnarray*}
b&=&a\big(1+b+2b^2\big)\\
a^2&=&(1-4ab)^2.
\end{eqnarray*}
{}From this the system has two characteristic points:
\begin{eqnarray*}
\bigg(\frac{2\sqrt{2}-1}{7},\frac{1}{\sqrt{2}},\frac{1}{\sqrt{2}}\bigg)& \approx&
(0.2612,0.7071,0.7071)\\
\bigg(\frac{2 \sqrt{3}-1}{11}, \frac{1+\sqrt{3}}{2},\frac{1+\sqrt{3}}{2}\bigg) &\approx&
(0.2240,1.3660,1.3660).
\end{eqnarray*}
Now we are left with determining which of the two characteristic points is 
$(\rho,\tau_1,\tau_2)$. By applying either Proposition \ref{location} or 
Proposition \ref{Lambda lemma},  it is the first of these.
\end{example}

%%%%%%  %EXAMPLE C1 = 0
\begin{example}\label{counter Drmota C1=0}
Let $\fa=0$, $\fb=9$, $\fc_1 = 0$, $\fc_2=1$, and $\fd = 1$. 
These numbers satisfy  \mbox{\rm(CCC)}. Following the hypotheses of
Proposition \ref{main example thm},
let $A(x)$ be the standard solution to $y = x(1 +  9y^2)$ and consider
the system
\begin{eqnarray*}
y_1&=&A(x)\cdot\big(1+y_2 + y_1^2\big)\\
y_2&=&A(x)\cdot\big(1+y_1+y_2^2\big).
\end{eqnarray*}
Since $c_1=0$ there are two characteristic points of the form $(a,b,b)$. The first
is the extreme point
$$
(\rho,\tau_1,\tau_2)\ =\ (1/6,1,1)
$$
which lies on the boundary of the domain, and the second is the interior point
obtained from the formulas in Proposition \ref{main example thm} (i):
$$
\bigg(\frac{1 + 16\sqrt{2}}{146}, 1 + \sqrt{2}, 1 + \sqrt{2}\bigg).
$$

\end{example}

%EXAMPLE C1 > 0
\begin{example}\label{counter Drmota C1>0}
Let $\fa=0$, $\fb=16$, $\fc_1 = 1$, $\fc_2=1$, and $\fd = 1$. 
These numbers satisfy  \mbox{\rm(CCC)}. Following the hypotheses of
Proposition \ref{main example thm},
let $A(x)$ be the standard solution to $y = x(1 +  16y^2)$, and
let $T(x)$ be the standard solution to $y = A(x)(1 +2y +y^2)$.
Consider
the system
\begin{eqnarray*}
y_1&=&A(x)\cdot\big(1+T(x) + y_2 + y_1^2\big)\\
y_2&=&A(x)\cdot\big(1+T(x) + y_1+y_2^2\big).
\end{eqnarray*}
Since $0<c_1<2c_2$, the extreme point 
$$
(\rho,\tau_1,\tau_2)\ =\ (1/8,1,1)
$$
is not a characteristic point, but there is a characteristic point
 of the form $(a,b,b)$ in the interior of the domain of $\bG$
 given by the formulas of Proposition \ref{main example thm}:
 $$
 (a,b,b)\ = \ \bigg(\frac{30 + 17\sqrt{5}}{545}, \frac{3+\sqrt{5}}{2}, \frac{3+\sqrt{5}}{2} \bigg).
$$
 
\end{example}

\subsection{Other examples}
The next example shows some characteristic points which are not of the form $(x,y,y)$
\begin{example}\label{4 CP}
The well-conditioned polynomial system
\begin{eqnarray*}
y_1&=& G_1(x,y_1,y_2)\  := \ x(1+2y_1^3+2x^3y_1^3y_2)\\
y_2&=& G_2(x,y_1,y_2)\  :=\  x(1+x^3y_2+2y_1^3y_2^2)
\end{eqnarray*}
has four characteristic points which, to 6 places of accuracy are:
\begin{quote}\sf
(0.1818598, 1.556545, 0.3647603)\\
(0.2640956, 1.210710, 0.5353688)\\
(0.3867644, 0.6661246, 3.834789)\\
(0.4153198, 0.6217456, 0.4743552)
\end{quote}
One sees that these four points form an antichain, as required by Proposition \ref{antichain}.
The extreme point $(\rho,\tau_1,\tau_2)$ of a polynomial system is a characteristic point.
By Proposition \ref{location} it must be the last one since it has the largest $x$-value,
assuming one has found all characteristic roots of this system.
If one is not sure that there are only four characteristic points then, 
by Theorem \ref{RCP Thm}, it suffices to verify that the indicated characteristic point is an eigenpoint.
\end{example}

This example demonstrates that iteration is not sufficient to obtain a new systems 
$\Sigma^\star$ such that the
Jacobian matrix $J_{\bG^\star}(x,\bfy)$ has non-zero entries.
\begin{example}\label{ex subst}
Consider the irreducible system  $ \mathbf{y} = \bG(x,\mathbf{y})$  of 4 equations:
$$
\Sigma \ = \ \left\{
\begin{array}{ r c l}
y_1& =&G_1(x,y_1,\ldots,y_4)\ :=\  x\big(1 + {y_2}^2 + {y_4}^2\big)\\
y_2& =&G_2(x,y_1,\ldots,y_4)\ :=\  x\big(1 + {y_1}^2 + {y_3}^2\big)\\
y_3&=&G_3(x,y_1,\ldots,y_4)\ :=\  x\big(1 + {y_4}^2\big)\\
y_4&=&G_4(x,y_1,\ldots,y_4)\ :=\  x\big(1 + {y_1}^2\big).
\end{array}\right.
$$
Let $M=J_{\bG^{(n)}}$. 
Then it is easy to check that $M_{11}\neq 0$  
 iff  $n$ is odd, and $M_{12}\neq 0$  iff $n$ is even. 
 Thus for $n\ge 1$, $J_{\bG^{(n)}}(x,\mathbf{y})$ has entries
 which are 0.
%Such a system is said to be \emph{periodic}.

One can transform $\Sigma$ into a system $\Sigma^\star$ where the Jacobian of 
$\bG^\star$ has all entries non-zero by doing {\em selective} substitutions. 
For example, in the first equation of $\Sigma$ replace {\em one} of the two $y_2$'s 
by $ G_2(x,\mathbf{y})$, giving the system
$$
 \left\{
\begin{array}{ r c l}
y_1& =& x(1 + y_2 G_2(x,\mathbf{y} ) + {y_4}^2)\\
y_2& =& x(1 + {y_1}^2 + {y_3}^2)\\
y_3&=& x(1 + {y_4}^2)\\
y_4&=& x(1 + {y_1}^2)
\end{array}\right.
$$
The first equation in this system is such that the right hand side 
depends on all 4 of the $y_i$. Continuing in this manner one obtains a system 
in which every $ G_i(x,\mathbf{y})$ depends on each of $y_1,\ldots,y_4$.
\end{example}

This example shows complications which can arise with reducible systems.
\begin{example}\label{ex reducible}
Consider the reducible polynomial system
\begin{eqnarray*}
y_1&=&y_3\cdot\big(1+y_2 + y_1^2\big)\\
y_2&=&y_3\cdot\big(1+y_1+y_2^2\big)\\
y_3&=&x\cdot (1 +  9{y_3}^2).
\end{eqnarray*}
Let the third equation have the standard solution $y_3=A(x)$. One then sees that this example is really 
just an alternate presentation of Example \ref{counter Drmota C1=0} where the solutions for $y_1$ and 
$y_2$ have a fourth-root singularity at their radius of convergence. 
\end{example}

This final example shows that there can be infinitely many real solutions to a characteristic system, in contrast to what has been observed so far for characteristic points, see Question \ref{CP finite Q}. 
\begin{example}\label{real solutions}
For the characteristic system (belonging to a 2-equation system)
$$ 
\left\{
\begin{array}{l c l}
       y_1-x\cdot\big(1+ y_1+ y_1y_2 \big) &=& 0\\
       y_2-x\cdot\big(1+ y_2+ y_1y_2 \big) &=&0\\
       (x-1)\cdot \big(x+xy_1+xy_2 - 1\big)   &=&0
\end{array} \right.
$$
the real solutions include the infinite curve
$$
\big\{(x,y_1,y_2) : x=1, y_1y_2 = -1\big\}.
$$
\end{example}

%BACKGROUND MATERIAL
\section{Background Material}\label{background}

 %%%%%%%%%%%  WORKING ON THE EXTENDED REALS  %%%%%%%%%%%%%

\subsection{The extended nonnegative real numbers}
\quad\\
Extend the usual operations on $[0,\infty)$ to $[0,\infty]$ in the obvious way as follows:
\begin{eqnarray*}
c+\infty&=&\infty\quad \text{for }c\in[0,\infty]\\
c\cdot \infty&=&\infty\quad \text{for }c\in(0,\infty]\\
\sum_n c_n&=&
\begin{cases}
\text{the usual infinite sum}&\text{ if all }c_n\in [0,\infty)\\
\infty&\text{if some }c_n = \infty.
\end{cases}
\end{eqnarray*}
Here the \textit{usual} infinite sum is $\infty$ if the series diverges.  Note that $0\cdot \infty$ is left undefined since it is indeterminate.

%%%%%%%%%%%  FOUNDATIONS OF POWER SERIES  %%%%%%%%%%

\subsection{Formal power series in several variables}

This section gives the essential definitions that lay the foundations for 
working with formal power series in several variables. 
The standard 
number systems are:
\begin{quote}
the set $\mathbb{N}=\{0,1,\ldots\}$ of {\em nonnegative} integers,
the set $\mathbb{Q} $ of {\em rational} numbers,
the set $\mathbb{R}$ of {\em real numbers}, and
the set $\mathbb{C}$ of {\em complex} numbers.
\end{quote}

For the linearly ordered set $\mathbb{R}$ of real numbers one has
the posets of \textit{real-valued functions on} $X$, where the
partial ordering is given by $f\leq g$ if $f(x)\le g(x)$ for all $x\in X$.
Familiar examples are:
\begin{thlist}
\item
$n$-\textit{vectors} $\mathbf{v} = (v_1,\ldots,v_n)$, by setting $X =\{1,\ldots,n\}$
\item
$m\times n$-\textit{matrices} $M$, by setting $X =\{1,\ldots,m\}\times \{1,\ldots,n\}$
\item
\textit{formal power series in $k$-variables} $A(x_1,\ldots,x_k)$ by setting
$X = \mathbb{N}^{k}$. In this case a function $a$ from $\mathbb{N}^k$ to $\mathbb{R}$ provides the 
coefficients, and one writes 
$$
A(\mathbf{x})\ :=\ \sum_{\mathbf{i}\in \mathbb{N}^k} 
a(\mathbf{i}) \mathbf{x}^\mathbf{i}
$$
\end{thlist}
A matrix (or vector) $M$ of real numbers is \textit{non-negative} (written $M\ge 0$) 
if each entry is non-negative, and \textit{positive} (written $M>0$) if each entry is positive. 
A power series $A(\mathbf{x})$ is \textit{non-negative} (written $A(\mathbf{x})\ge 0$) if each coefficient is non-negative.
 
 %%%%%%%%%%  COMPOSITION OF POWER SERIES   %%%%%%%%%
 
 \subsubsection{Composition of formal power series}
 
 For power series $A(w_{1},\ldots,w_{m})$ and $B_{\ell}(\mathbf{x})$, $1\le \ell \le m$, 
 where the constant term of each $B_{\ell}$ is zero, that is,  $b_{\ell}(\mathbf{0})=0$, define the formal
{\em composition}
$$C(\mathbf{x})\ :=\ A\big(B_{1}(\mathbf{x}),\ldots,B_{m}(\mathbf{x})\big)$$ 
by defining the coefficient function as follows:
$$
c(\mathbf{i})\ :=\ 
\sum_{\mathbf{j}\ge\mathbf{0}}%,\mathbf{k}_{1},\ldots,\mathbf{k}_{m} }
\big[\mathbf{x}^{\;\mathbf{i}}\big] \;
a(\mathbf{j}) \cdot {B_{1}(\mathbf{x})}^{j_{1}}\cdots {B_{m}(\mathbf{x})}^{j_{m}}
$$
Requiring that the constant term of the $B_{\ell}(\mathbf{x})$ be 0 guarantees that for each $\mathbf{i}$ 
only finitely many terms in this sum are nonzero. Consequently $C(\mathbf{x})$ is indeed
a formal power series.

%%%%%%%%%%% FUNCTION DEFINED BY POWER SERIES  %%%%%%

  \subsubsection{The function defined by a formal power series}
  
A power series $A(\mathbf{x})$ in $k$ variables defines a partial function,
also denoted $A(\mathbf{x})$,
on $\mathbb{R}^k$ (or $\mathbb{C}^k$) by setting
 \begin{equation}\label{def PS fcn}
  {A}(\mathbf{c})\ :=\ 
  \sum_{n\ge 0}\ \sum_{i_{1}+\cdots+i_{k}\ =\ n}a(\mathbf{i}){\mathbf{c}^\mathbf{i}}
  \qquad (\mathbf{c}\in \mathbb{R}^k)
  \end{equation}
  whenever the sum converges.

  For  $A(\mathbf{x})$  a nonnegative power series in $k$ variables, 
 and for $\mathbf{c}\in[0,\infty]^{k}$,
  $A(\mathbf{c})\ =\ \infty$
  if the series  \eqref{def PS fcn} diverges, that is, if
  $$
  \lim_{n\rightarrow\infty}\ \sum_{{j\le n}}\ \sum_{i_{1}+\cdots+i_{k}\ =\ j}
  a(\mathbf{i}) \mathbf{c}^\mathbf{i}\ =\ \infty.
  $$
  A nonnegative power series $A(\mathbf{x})$ in $k$ variables defines a left-continuous function  from 
$[0,\infty]^k$ to $[0,\infty]$ and is monotone nondecreasing in each variable on $[0,\infty]^k$.%, 

 %%%%%%%%%  FORMAL DERIVATIVES  %%%%%%%%%%%%%%%%
 
 \subsubsection{The derivatives of a formal power series}
 Derivatives of [nonnegative] formal power series give [nonnegative] formal power series:
 $$
 \frac{\partial A(\mathbf{x})}{\partial x_j}\ :=\ 
\sum_{\mathbf{i}\ge\mathbf{0}} i_j a(\mathbf{i})x_1^{i_1}\cdots x_j^{i_j -1}\cdots x_k^{i_k}.
 $$
  The notation $A_{x_{j}}$  is also used for the partial derivative $\partial A/\partial x_{j}$.

%%%%%%%%  HOLOMORPHIC / LAW OF PERMANENCE  %%%%%%%

\subsubsection{Holomorphic functions and a law of permanence}

A complex-valued function $f(\mathbf{x})$ of several complex variables is {\em holomorphic} at $\mathbf{c}$
if it is continuous and differentiable in a neighborhood of $\mathbf{c}$.
The notation $[\mathbf{a},\mathbf{b}]$ is short for 
\[
[a_{1},b_{1}]\times\cdots\times[a_{k},b_{k}].
\]

\begin{proposition}[A Law of Permanence for Functional Equations]\label{permanence}
Suppose \[A(\mathbf{x}),B(\mathbf{x},y)\ge 0.\] If there is an $\varepsilon>0$ such
that
$$
A(\mathbf{x})\ =\ B\big(\mathbf{x},A(\mathbf{x})\big) \ <\ \infty\quad
\text{for } \mathbf{x}\in [\mathbf{0},\pmb{\varepsilon}]
$$
then
$$
A(\mathbf{x})\ =\ B\big(\mathbf{x},A(\mathbf{x})\big) \quad
\text{for } \mathbf{x}\in [\mathbf{0},\pmb{\infty}].
$$
If furthermore $\mathbf{a}\;>\mathbf{0}$   and 
$A(\mathbf{a}) \ <\ \infty$
then
$$
A(\mathbf{x})\ =\ B\big(\mathbf{x},A(\mathbf{x})\big) \quad
\text{for } |x_{i}| \le a_{i},\  1\le i\le k
$$
 and $A(\mathbf{x})$
is holomorphic for $ |x_{i}| < a_{i},\  1\le i\le k$.
\end{proposition}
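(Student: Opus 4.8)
The plan is to lift the identity to the level of formal power series, where an identity is decided coefficientwise, using throughout that $A$, $B$, and their composite are nonnegative. Set $C(\mathbf{x}):=B\big(\mathbf{x},A(\mathbf{x})\big)$ for the formal composition obtained by substituting $x_1,\dots,x_k$ and $A(\mathbf{x})$ into $B$. The first step is to verify that $C$ is a well-defined nonnegative formal power series: for a fixed multi-index $\mathbf{i}=(i_1,\dots,i_k)$ only the monomials $x_1^{j_1}\cdots x_k^{j_k}$ with $j_\ell\le i_\ell$ can contribute, and for each such choice the remaining sum over the exponent $j_{k+1}$ of $A(\mathbf{x})$ is finite, being bounded --- via the elementary estimate $[\mathbf{x}^{\mathbf m}]\,A(\mathbf{x})^{\,j}\le\pmb{\varepsilon}^{-\mathbf m}A(\pmb{\varepsilon})^{\,j}$, immediate from nonnegativity of the coefficients of $A^{\,j}$ --- by a multiple of $\pmb{\varepsilon}^{-\mathbf{i}}\,B\big(\pmb{\varepsilon},A(\pmb{\varepsilon})\big)$, which is finite since putting $\mathbf{x}=\pmb{\varepsilon}$ in the hypothesis gives $A(\pmb{\varepsilon})=B\big(\pmb{\varepsilon},A(\pmb{\varepsilon})\big)<\infty$. (If $A(\mathbf{0})=0$ this is automatic since $A$ then has positive order; the estimate is needed only to allow a nonzero constant term of $A$.)

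Next I would isolate the one genuinely load-bearing lemma: for nonnegative power series, forming the formal composition and then evaluating agrees with evaluating and then composing, i.e. $C(\mathbf{c})=B\big(\mathbf{c},A(\mathbf{c})\big)$ for every $\mathbf{c}\in[\mathbf{0},\pmb{\infty}]$ with all coordinates positive (on a face $c_\ell=0$ this then follows from left-continuity of both sides, and $\mathbf{c}=\mathbf{0}$ is the hypothesis itself). This is a Tonelli-type rearrangement of a triple series all of whose terms lie in $[0,\infty]$, hence is legitimate; the only thing to track is the arithmetic of $\infty$, and the indeterminate product $0\cdot\infty$ never arises because the coordinates of $\mathbf{c}$ are taken positive. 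Granting the lemma, $C(\pmb{\varepsilon})=B\big(\pmb{\varepsilon},A(\pmb{\varepsilon})\big)<\infty$, so both $A$ and $C$ converge absolutely for $|x_i|\le\varepsilon_i$ and are holomorphic for $|x_i|<\varepsilon_i$; and for real $\mathbf{x}\in(\mathbf{0},\pmb{\varepsilon}]$ the hypothesis combined with the lemma gives $A(\mathbf{x})=C(\mathbf{x})$, so the two holomorphic functions agree on a real box with nonempty interior. By uniqueness of the Taylor expansion, $A=C$ as \emph{formal} power series.

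From $A=C$ the three assertions fall out. First, a formal power series determines its $[0,\infty]$-valued function by its coefficients alone, so $A(\mathbf{x})=C(\mathbf{x})$ for every $\mathbf{x}\in[\mathbf{0},\pmb{\infty}]$, and the lemma (with left-continuity on the faces) rewrites the right side as $B\big(\mathbf{x},A(\mathbf{x})\big)$. For the stronger statement, assume $\mathbf{a}>\mathbf{0}$ and $A(\mathbf{a})<\infty$; then the first assertion gives $B\big(\mathbf{a},A(\mathbf{a})\big)=A(\mathbf{a})<\infty$, so both $\sum a(\mathbf{i})\mathbf{a}^{\mathbf{i}}$ and $\sum b(\mathbf{j})\,a_1^{j_1}\cdots a_k^{j_k}A(\mathbf{a})^{j_{k+1}}$ converge. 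The former forces absolute convergence of $A$ on the closed polydisc $\{|x_i|\le a_i\}$, hence holomorphy of $A$ on $\{|x_i|<a_i\}$ by the Weierstrass $M$-test; the latter, together with $|A(\mathbf{x})|\le A(\mathbf{a})$ and $|x_\ell|\le a_\ell$, dominates the series for $C(\mathbf{x})$ at complex arguments with $|x_i|\le a_i$ and permits rearranging it into the honest composition $B\big(\mathbf{x},A(\mathbf{x})\big)$ of the holomorphic functions $B$ and $A$; since $A=C$, this yields $A(\mathbf{x})=B\big(\mathbf{x},A(\mathbf{x})\big)$ for $|x_i|\le a_i$.

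The only place that requires real care is the bookkeeping lemma of the second paragraph: well-definedness of the formal composition without assuming $A(\mathbf{0})=0$, the interchange of summations in $[0,\infty]$, and the passage to boundary faces by left-continuity. Once this arithmetic in the extended nonnegative reals is pinned down, the remainder is just the uniqueness of Taylor coefficients and routine domination estimates, and --- despite the name ``permanence'' --- no analytic continuation is involved.
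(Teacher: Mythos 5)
The paper's own ``proof'' of this proposition is a one-line citation to Hille's general law of permanence for functional equations (\S10.7 of Vol.~2 of \cite{Hille1962}); you give a genuinely self-contained argument, so the routes are different. Hille's theorem is an analytic-continuation result for solutions of functional equations, applicable without any positivity; your proof instead leans entirely on nonnegativity and Tonelli-type rearrangement, never invoking continuation. The decomposition you choose --- (i) show the formal composite $C(\mathbf{x})=B(\mathbf{x},A(\mathbf{x}))$ exists coefficientwise even when $A(\mathbf{0})\ne 0$, via the bound by $\pmb{\varepsilon}^{-\mathbf{i}}B(\pmb{\varepsilon},A(\pmb{\varepsilon}))$; (ii) prove that evaluating the formal composite at a positive point of $[\mathbf{0},\pmb{\infty}]$ agrees with first evaluating $A$ and then $B$; (iii) deduce $A=C$ as formal series from agreement on a real box with nonempty interior; (iv) re-evaluate globally and, under $A(\mathbf{a})<\infty$, dominate to get holomorphy --- is sound and cleanly separates the formal from the analytic content. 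Step (i) is doing real work and correctly flagged, since the paper's own definition of composition requires the inner series to have zero constant term. Your argument is correct; what Hille's citation buys the paper is brevity, what yours buys is transparency about exactly where nonnegativity enters. One small tightening worth making: in the Tonelli step you should explicitly restrict the sum over $\mathbf{j}$ to the support of $b$ (and likewise for $a$ in the evaluation of $A(\mathbf{c})$) so that the paper's undefined product $0\cdot\infty$ truly never occurs even when some $c_\ell=\infty$; saying only that the coordinates of $\mathbf{c}$ are positive does not by itself exclude it.
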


\begin{proof}
This is a special case of Hille's \textit{law of permanence for functional equations}
given in $\S10.7$ of Vol.~2, \cite{Hille1962}.
\end{proof}

 %%%%%%%%%%  PERRON-FROBENIUS THEORY   %%%%%%%%%%%%
 
 \subsection{The Perron-Frobenius theory of nonnegative matrices}
The key to the main results of this paper are some simple observations based on the well-known Perron-Frobenius theory of nonnegative matrices that was developed 
ca.~1910.
\begin{proposition}\label{PF Prop}
%\marginpar{\fbox{\tiny Modified}}
Let $M$ be {an irreducible} nonnegative nonzero $k\times k$ matrix with real entries.
\begin{thlist}
  \item $M$ has a real eigenvalue.  
 \item The largest real eigenvalue $\Lambda(M)$ is positive and is given by
$$
\Lambda(M)\ =\ \max_{\mathbf{x}>\mathbf{0}} \min_{1\le i\le k}\frac{(M\mathbf{x})_i}{x_i}.
$$
  \item $\Lambda(M)$ is a\textit{ simple root} of the characteristic polynomial 
  $p_{M}(\lambda) = \det(\lambda I-M)$.
  \item The eigenspace belonging to $\Lambda(M)$ is 1-\textit{dimensional}, 
  generated by a unique positive normalized eigenvector $\mathbf{v}_{M}$. 
  ({Normalized} means the sum of the entries is 1).
\end{thlist}
\end{proposition}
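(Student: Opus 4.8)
The plan is to prove Proposition~\ref{PF Prop} by the classical Collatz--Wielandt variational method, which also produces the max--min formula of item~(b) for free. The one structural fact I will use repeatedly is that irreducibility means the digraph of $M$ is strongly connected, so every entry of $(I+M)^{k-1}$ is strictly positive (there is a walk of length at most $k-1$ between any two vertices). For a nonnegative $\mathbf{x}\neq\mathbf{0}$ put $r(\mathbf{x}):=\max\{t\ge 0:M\mathbf{x}\ge t\mathbf{x}\}$, so that $r(\mathbf{x})=\min\{(M\mathbf{x})_i/x_i:x_i>0\}$; note $r$ is scale-invariant, and on the open cone $\{\mathbf{x}>\mathbf{0}\}$ it is continuous and agrees with the expression $\min_i(M\mathbf{x})_i/x_i$ appearing in~(b).

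First I would establish items (a) and (b), together with positivity of the eigenvalue. Summing coordinates in $M\mathbf{x}\ge t\mathbf{x}$ shows $t$ is at most the largest column sum of $M$, so $\lambda^{\ast}:=\sup\{r(\mathbf{x}):\mathbf{x}\ge\mathbf{0},\ \mathbf{x}\neq\mathbf{0}\}$ is finite, and by scale-invariance the supremum runs over the standard simplex $\Delta$. The pivotal monotonicity step is: for nonzero $\mathbf{x}\ge\mathbf{0}$ the vector $M\mathbf{x}-r(\mathbf{x})\mathbf{x}$ is nonnegative, hence $(I+M)^{k-1}$ applied to it is either $\mathbf{0}$ or strictly positive, which says that $\mathbf{w}:=(I+M)^{k-1}\mathbf{x}>\mathbf{0}$ satisfies $M\mathbf{w}-r(\mathbf{x})\mathbf{w}\ge\mathbf{0}$, with strict inequality in every coordinate unless $\mathbf{x}$ is already an eigenvector for $r(\mathbf{x})$. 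Hence $\lambda^{\ast}=\sup_{\mathbf{x}\in\Delta}r((I+M)^{k-1}\mathbf{x})$, a supremum of a continuous function over the compact set of positive vectors $\{(I+M)^{k-1}\mathbf{x}:\mathbf{x}\in\Delta\}$, so it is attained at some $\mathbf{v}>\mathbf{0}$; the same step applied to $\mathbf{v}$ forbids $M\mathbf{v}-\lambda^{\ast}\mathbf{v}$ from being nonzero, so $M\mathbf{v}=\lambda^{\ast}\mathbf{v}$, and $M\neq 0$ together with $\mathbf{v}>\mathbf{0}$ gives $\lambda^{\ast}>0$. Finally, if $\mu$ is any real eigenvalue with eigenvector $\mathbf{z}$, the componentwise inequality $M|\mathbf{z}|\ge|\mu|\,|\mathbf{z}|$ yields $|\mu|\le r(|\mathbf{z}|)\le\lambda^{\ast}$; thus $\lambda^{\ast}$ is the largest real eigenvalue $\Lambda(M)$, and being attained at a positive vector it equals $\max_{\mathbf{x}>\mathbf{0}}\min_i(M\mathbf{x})_i/x_i$, which is (a) and (b).

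For item (d), let $M\mathbf{z}=\Lambda(M)\mathbf{z}$ with $\mathbf{z}$ real (the complex case splits into real and imaginary parts, each an eigenvector since $\Lambda(M)$ is real). With $\beta=\max_i z_i/v_i$ the vector $\mathbf{w}=\beta\mathbf{v}-\mathbf{z}$ is nonnegative and satisfies $M\mathbf{w}=\Lambda(M)\mathbf{w}$, so $(I+M)^{k-1}\mathbf{w}=(1+\Lambda(M))^{k-1}\mathbf{w}$ has a zero coordinate, which by positivity of $(I+M)^{k-1}$ forces $\mathbf{w}=\mathbf{0}$, i.e.\ $\mathbf{z}=\beta\mathbf{v}$; the eigenspace is therefore one-dimensional, with unique positive normalized generator $\mathbf{v}/\sum_i v_i$. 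For item (c), I would differentiate via the adjugate/trace identity $p_M'(\lambda)=\sum_{i=1}^{k}p_{M_i}(\lambda)$, where $M_i$ is the principal submatrix of $M$ obtained by deleting row and column $i$. The decisive lemma is that $\Lambda(M_i)<\Lambda(M)$ for every $i$: taking a nonnegative eigenvector of the nonnegative matrix $M_i$ for its spectral radius and extending it by a zero in position $i$ gives a nonzero $\mathbf{u}\ge\mathbf{0}$ with $M\mathbf{u}\ge\Lambda(M_i)\mathbf{u}$, so if $\Lambda(M_i)\ge\Lambda(M)$ then $r(\mathbf{u})=\Lambda(M)$, forcing $M\mathbf{u}=\Lambda(M)\mathbf{u}$ and hence $\mathbf{u}=(1+\Lambda(M))^{-(k-1)}(I+M)^{k-1}\mathbf{u}>\mathbf{0}$, contradicting $u_i=0$. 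Since $\Lambda(M)$ then strictly exceeds the modulus of every eigenvalue of each $M_i$, every factor of $p_{M_i}(\Lambda(M))=\prod_{\mu}(\Lambda(M)-\mu)$ is positive (real factors directly, non-real conjugate pairs contributing $|\Lambda(M)-\mu|^{2}>0$), so $p_{M_i}(\Lambda(M))>0$ for all $i$ and thus $p_M'(\Lambda(M))>0$: $\Lambda(M)$ is a simple root.

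I expect the real work to be two points of care rather than anything deep. First, the supremum defining $\lambda^{\ast}$ must genuinely be attained, which fails if one tries to maximize $r$ directly on $\Delta$ (there $r$ is only upper semicontinuous); the detour through the positive matrix $(I+M)^{k-1}$, together with its monotonicity property, is exactly what fixes this. Second, the strict submatrix inequality $\Lambda(M_i)<\Lambda(M)$ used for item (c) is the single place where irreducibility is indispensable, and it relies on already having the positivity of the Perron vector in hand. The remaining ingredients---the column-sum bound, the componentwise triangle inequality, and the trace-of-adjugate formula---are routine, and one always has the fallback of citing the standard theory of nonnegative matrices referred to just before the statement.
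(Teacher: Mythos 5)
The paper gives no proof of this proposition at all---it simply cites \S 2 of Gantmacher's \emph{Applications of the Theory of Matrices}. Your proposal, by contrast, is a complete and self-contained proof via the classical Collatz--Wielandt variational method, so there is no ``paper approach'' to compare against beyond the citation.

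Your argument is correct. The two places where care is genuinely needed are handled properly: you route the supremum through the compact set $\{(I+M)^{k-1}\mathbf{x}:\mathbf{x}\in\Delta\}\subset\{\mathbf{y}>\mathbf{0}\}$, on which the Collatz--Wielandt function $r$ is continuous, exactly to sidestep the failure of attainment (and the mere upper semicontinuity of $r$) on the boundary of the simplex; and for simplicity of the root you combine the trace-of-the-adjugate identity $p_M'(\lambda)=\sum_i p_{M_i}(\lambda)$ with the strict submatrix inequality $\Lambda(M_i)<\Lambda(M)$, which is where irreducibility enters decisively through the positivity of $(I+M)^{k-1}$. The chain ``$r(\mathbf{u})=\Lambda(M)$ forces $M\mathbf{u}=\Lambda(M)\mathbf{u}$ forces $\mathbf{u}>\mathbf{0}$'' is correct, and the positivity of $\Lambda(M)$ from $M\ne 0$ together with $\mathbf{v}>\mathbf{0}$ is immediate. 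One small point worth flagging: in item (c) you invoke, for the possibly \emph{reducible} principal submatrices $M_i$, the weak Perron--Frobenius fact that the spectral radius of a nonnegative matrix is an eigenvalue admitting a nonnegative eigenvector. You do not derive this from what you have already established for irreducible matrices (a short limiting argument $M_i+\varepsilon J$, $\varepsilon\downarrow 0$, would do it), but since you explicitly allow falling back on the standard theory, this is a citation rather than a gap. Overall the proposal is more informative than the paper, which leaves the entire proposition to Gantmacher.
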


\begin{proof}
(See  \S2 of Gantmacher \cite{Gant1959}.) 
\end{proof}

Note that Proposition \ref{PF Prop}(b) implies that for some $\mathbf{x}>\mathbf{0}$ 
one has $\Lambda(M)$ equal to $ \min_{1\le i\le k}\dfrac{(M\mathbf{x})_i}{x_i}$.

%%%%%%%%%%   COROLLARY   Prop of Lambda  %%%%%%%%%

\begin{corollary} \label{prop of Lambda}
\begin{thlist}
  \item
  A positive $k\times k$ matrix $M$, $k\ge 2$, has all diagonal entries $<\Lambda(M)$.
  \item
  $\Lambda(X)$ is a nondecreasing function on the set of nonnegative matrices, that is,
  $M_1\leq M_2$ implies $\Lambda(M_1)\le \Lambda(M_2)$. Furthermore if every row [column] sum of $M_1$
  is less than the corresponding row [column] sum of $M_2$ then $\Lambda(M_1)< \Lambda(M_2)$.
  \item
  $\Lambda(X)$ is a continuous function on the set of nonnegative matrices, where the matrices
  are thought of as points in $k^2$-space.
  \end{thlist}
  \end{corollary}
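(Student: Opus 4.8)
The plan is to derive all three parts from Proposition~\ref{PF Prop}, using throughout the fact that for a non-negative matrix $M$ the number $\Lambda(M)$ --- its largest real eigenvalue --- equals its spectral radius (the general, not necessarily irreducible, case of Perron--Frobenius; see Gantmacher~\cite{Gant1959}).

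For part~(a), let $M$ be positive with $k\ge 2$. By Proposition~\ref{PF Prop}(d) there is a positive eigenvector $\mathbf v>\mathbf 0$ with $M\mathbf v=\Lambda(M)\mathbf v$. For each $i$,
\[
\Lambda(M)\,v_i \;=\; \sum_j M_{ij}v_j \;=\; M_{ii}v_i+\sum_{j\ne i}M_{ij}v_j \;>\; M_{ii}v_i,
\]
the strict inequality holding because $k\ge2$ forces the sum over $j\ne i$ to contain a strictly positive term; dividing by $v_i>0$ yields $M_{ii}<\Lambda(M)$.

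For part~(c), the coefficients of the characteristic polynomial $p_M(\lambda)=\det(\lambda I-M)$ are polynomials in the entries of $M$, hence continuous in $M$; since the roots of a monic polynomial depend continuously on its coefficients, the spectral radius $\rho(M)=\max\{|\lambda|:p_M(\lambda)=0\}$ is continuous in $M$, and for non-negative $M$ it equals $\Lambda(M)$. For the non-strict part of~(b), first suppose $M_1\le M_2$ are both \emph{positive}: for every $\mathbf x>\mathbf 0$ one has $\min_i(M_1\mathbf x)_i/x_i\le\min_i(M_2\mathbf x)_i/x_i\le\Lambda(M_2)$ by Proposition~\ref{PF Prop}(b), and taking the supremum over $\mathbf x$ gives $\Lambda(M_1)\le\Lambda(M_2)$. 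For arbitrary non-negative $M_1\le M_2$ apply this to $M_1+\varepsilon J\le M_2+\varepsilon J$, where $J$ is the all-ones matrix, and let $\varepsilon\downarrow 0$, invoking the continuity just established.

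For the strict clause of~(b) I would argue as follows, assuming --- as holds wherever this clause is invoked in the paper --- that $M_1$ is irreducible; then $M_2\ge M_1$ is irreducible too, and Proposition~\ref{PF Prop}(d) furnishes a strictly positive eigenvector $\mathbf w>\mathbf 0$ with $M_1\mathbf w=\Lambda(M_1)\mathbf w$. If every row sum of $M_1$ is strictly smaller than the corresponding row sum of $M_2$ (with $M_1\le M_2$), then each row $i$ has some $j$ with $(M_2)_{ij}>(M_1)_{ij}$, so, since $\mathbf w>\mathbf 0$,
\[
(M_2\mathbf w)_i \;=\; \sum_j(M_2)_{ij}w_j \;>\; \sum_j(M_1)_{ij}w_j \;=\; \Lambda(M_1)\,w_i
\]
for every $i$; hence $\min_i(M_2\mathbf w)_i/w_i>\Lambda(M_1)$ and Proposition~\ref{PF Prop}(b) gives $\Lambda(M_2)\ge\min_i(M_2\mathbf w)_i/w_i>\Lambda(M_1)$. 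The column-sum version follows by applying this to the transposes $M_1^{\mathsf T}\le M_2^{\mathsf T}$, which have the same largest real eigenvalue. I expect this last clause to be the only real obstacle: without irreducibility the strict inequality can genuinely fail --- e.g.\ an upper-triangular $M_1$ whose strictly dominant diagonal entry is left unchanged while the remaining rows are enlarged --- so the irreducibility hypothesis, automatic for the Jacobians considered here, cannot be dropped; everything else is routine once $\Lambda(M)=\rho(M)$ is granted.
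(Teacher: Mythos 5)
The paper's ``proof'' is literally ``(Exercise.)'', so there is no argument in the text to compare against; your job was simply to supply one, and what you have supplied is correct. Part~(a) from the positive Perron eigenvector, part~(c) via continuous dependence of the zero set of a monic polynomial on its coefficients together with $\Lambda(M)=\rho(M)$ for non-negative $M$, and the non-strict monotonicity in~(b) via the Collatz--Wielandt formula from Proposition~\ref{PF Prop}(b) on positive matrices followed by the $M+\varepsilon J$ perturbation and the continuity already proved in~(c) --- all of this is exactly the standard route and it all checks out.

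Your handling of the strict clause of~(b) is the most interesting part, and your skepticism is warranted: as literally stated, the claim is false for general non-negative $M_1\le M_2$. A clean concrete witness, slightly sharper than the one you sketched, is
\[
M_1=\begin{pmatrix}2&0\\0&0\end{pmatrix},\qquad M_2=\begin{pmatrix}2&1\\0&1\end{pmatrix},
\]
where $M_1\le M_2$, every row sum strictly increases ($2<3$, $0<1$), yet $\Lambda(M_1)=\Lambda(M_2)=2$. So some connectivity hypothesis is genuinely required. Your repair --- assume $M_1$ (hence $M_2$) irreducible, take the positive left/right Perron eigenvector $\mathbf{w}$ of $M_1$, note that the strict row-sum hypothesis combined with $M_1\le M_2$ forces $(M_2\mathbf{w})_i>(M_1\mathbf{w})_i=\Lambda(M_1)w_i$ for every $i$, and then apply the Collatz--Wielandt bound to $M_2$ at $\mathbf{w}$ --- is exactly right, and the transpose argument for columns is fine since $\Lambda(M)=\Lambda(M^{\mathsf T})$ and irreducibility is preserved. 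You are also right that this added hypothesis is harmless here: every invocation of strict monotonicity in the paper (most pointedly in the proof of Proposition~\ref{Lambda lemma}) occurs after the reduction of Remark~\ref{nonzero partials}, so the Jacobians involved are positive matrices and the irreducibility is automatic. In short: a correct filling-in of the paper's exercise, plus a legitimate catch that the strict clause should be read with the irreducibility hypothesis implicit.
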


  \begin{proof}(Exercise.)

(Note: A special case of item (c) is stated on p.~2103 of Lalley \cite{Lalley1993}, for certain Jacobian matrices denoted $J_z$, evaluated along certain curves.)
\end{proof}

\end{document}